\documentclass[11pt]{article}
\usepackage{graphicx}
\usepackage{amsmath,amssymb,amsthm,amsfonts}
\usepackage{color}
\usepackage{cite}
\newtheorem{thm}{Theorem}[section]

\newtheorem{lem}[thm]{Lemma}

\theoremstyle{definition}
\newtheorem{defn}{Definition}[section]
\theoremstyle{remark}
\usepackage{appendix}
\newtheorem{rem}{Remark}[section]

\numberwithin{equation}{section}

\DeclareMathSymbol{\C}{\mathalpha}{AMSb}{"43}

\newcommand{\R}{{\mathbb{R}}}

\newcommand{\inte}{\int_{\mathbb{R}^d}}

\newcommand{\Tr}{\mathrm{Tr}}

\newcommand{\fS}{\mathfrak{S}}

\def\R{{\mathbb R}}
\def\C{{\mathbb C}}

\newcommand{\bsub}{\begin{subequations}}
\newcommand{\esub}{\end{subequations}$\!$}

\allowdisplaybreaks[4]

\begin{document}
\title{Ground States of Nonlinear Fermionic Systems: From Power-Law Interactions to Logarithmic Sobolev Inequality
}

\author{
Bin Chen\thanks{Academy of Mathematics and Systems Science, Chinese Academy of Sciences, Beijing 100190, P. R. China. B. Chen  is partially supported by NSF of China (Grant 12501151).
Email:  binchen@amss.ac.cn.},  \
Yujin Guo\thanks{School of Mathematics and Statistics,  Key Laboratory of Nonlinear Analysis $\&$ Applications (Ministry of Education), Central China Normal University, Wuhan 430079, P. R. China. Y.  Guo is partially supported by  NSF of China (Grants 12225106 and 12371113) and National Key R $\&$ D Program of China (Grant 2023YFA1010001). Email: yguo@ccnu.edu.cn. },\
Yong Luo\thanks{School of Mathematics and Statistics, and Hubei Key Laboratory of Mathematical Sciences,  Central China Normal University, Wuhan 430079, P. R. China. Y. Luo is partially supported by NSF of China (Grant  12571119). Email: yluo@ccnu.edu.cn.},
\, and\,
Zhenya Yan\thanks{School of Mathematics and Information Science, Zhongyuan University of Technology, Zhengzhou 450007, P. R. China; State Key Laboratory of Mathematical Sciences, Academy of Mathematics and Systems Science, Chinese Academy of Sciences, Beijing 100190, P. R. China; School of Mathematical Sciences, University of Chinese Academy of Sciences, Beijing 100049, P. R. China.  Z. Yan is partially supported by NSF of China (Grant 12471242) and National Key R \& D Program of China (Grant 2024YFA1013101).  Email: zyyan@mmrc.iss.ac.cn.}
}
\date{\today}

\smallbreak \maketitle
	
\begin{abstract}
We consider ground states of a two-component  logarithmic  fermionic system in $\R^d$, where $d\ge 1$ is arbitrary. We prove that  up to  translations and scalings, ground states of the logarithmic system are the $L^\infty $-limits of ground states for a two-component   $2p-1$ power-law  fermionic   system  as $p\searrow 1$. As a byproduct, we also establish a sharp logarithmic Sobolev inequality for orthonormal functions in \(\mathbb{R}^d\), whose optimizers are, up to  scalings, the minimizers of a constraint variational problem associated with the logarithmic system.
\end{abstract}
	
\vskip 0.05truein
	
\noindent {\it Keywords:} Logarithmic fermionic systems; Power-law interactions; Ground states; Logarithmic Sobolev inequality

\vskip 0.2truein
	

\section{Introduction}

In quantum mechanics, a system of \(N\)  identical fermions can be described  by an energy functional of normalized antisymmetric \(N\)-particle wave functions.  The infimum of this energy functional is called the ground-state energy of the system, and any minimizer, when it exists, is called a ground state of the system. 
In the effective fermionic model considered in
\cite{i, Lieb1981} and the references therein, the corresponding ground-state problem can be formulated in terms of Slater determinants, which are generated by orthonormal one-particle orbitals. Especially, Gontier, Lewin, and Nazar
\cite{i} studied  the following  constraint  variational problem
\begin{equation}\label{ep}
	\begin{split}
		E_N(p):=&\inf\Big\{\text{Tr}(-\Delta\gamma)-\frac{1}{p}\inte\rho_\gamma^pdx:\  \gamma\in\mathcal{P}_N\Big\},\ \,  N\in\mathbb{N}^+,\,\ p>1,
\end{split}\end{equation}
where
\begin{equation*}\label{1.4a}
	\text{Tr}(-\Delta\gamma):= \sum_{i=1}^N\inte|\nabla u_i|^2dx,\ \ \   \rho_\gamma:=\sum_{i=1}^N|u_i|^2,
\end{equation*}
and
\begin{align}\label{pl}
	\mathcal{P}_N:=&\Big\{ \gamma:\,  \gamma=\sum_{i=1}^N|u_i\rangle\langle u_i|,\ u_i\in H^1(\R^d, \C),\ \langle u_i,u_j\rangle_{L^2}=\delta_{ij}\Big\}
\end{align}
denotes the set of  one-body density matrices associated with the Slater determinant. 

It was proved in \cite{i} that for any $d\geq1$,  there exists a constant $1<p_c(d)<\min\big(2,\,  1+2/d\big)$ such that the problem $E_N(p)$ admits  a minimizer ({\em i.e., ground state}) $\gamma_p=\sum_{i=1}^N|u_{ip}\rangle\langle u_{ip}|$. Here $u_p=(u_{1p}, \cdots, u_{Np})$ 
satisfies the following $2p-1$ power-law fermionic Schr\"odinger system
\begin{align}\label{eqv}
	\arraycolsep=1.5pt
	\left\{\begin{array}{lll}
		-\Delta u_{i}-\big(\sum_{j=1}^N|u_j|^2\big)^{p-1}u_{i}=\mu_{i}u_{i}\ \   \text{in}\ \, \R^d, \\[3mm]
		\langle u_i,u_j\rangle_{L^2}=\delta_{ij},\ \   i, j=1,\cdots, N,
	\end{array}\right.
\end{align}
where  $\mu_1<\mu_2\leq\cdots\leq\mu_N<0$ are the  Lagrange multipliers and  the  $N$ first eigenvalues (counted with multiplicity) of the operator $-\Delta -\big(\sum_{j=1}^N|u_j|^2\big)^{p-1}$ on $L^2(\R^d,\, \C)$.
We also refer the reader to \cite{ge,ii,cg1}  for more analytical results of the problem $E_N(p)$.

Particularly,  the  problem \eqref{ep} with  \(N=1\) reduces to the following classical mass constraint  variational problem
\[
E_1(p)
=\left\{\int_{\mathbb R^d}|\nabla u|^2\,dx
-\frac1p\int_{\mathbb R^d}|u|^{2p}\,dx:\  u\in H^1(\mathbb R^d,\mathbb C),\ \|u\|_2^2=1\right\},
\]
which is often used (cf. \cite{Cazenave, Rabinowitz}) to describe nonlinear optics, attractive Bose-Einstein condensates, and some other physical phenomena.
In the mass subcritical range \(1<p<1+2/d\), the problem $E_1(p)$ admits a unique (up to translations and a constant phase) minimizer $u>0$,
which satisfies the corresponding Euler--Lagrange equation
\begin{align}\label{nls}
	-\Delta u-|u|^{2p-2}u=\mu u\ \ \text{in}\ \, \mathbb R^d
\end{align}
for some Lagrange multiplier $\mu\in\R$. 
One can observe that
\begin{align}\label{M:nls}
	\frac{|u|^{2p-2}-1}{p-1}u\to u\log|u|^2\ \ \ \text{as}\ \ p\searrow1.
\end{align}
This yields formally that after a suitable scaling and a corresponding normalization of $\mu$,  the power-law Schr\"odinger equation \eqref{nls} is related as \(p\searrow1\) to the logarithmic Schr\"odinger equation
\begin{align}\label{lognls1}
	-\Delta u-u\log|u|^2=\mu_0 u\ \   \text{in}\ \, \R^d,
\end{align}
which was first proposed by Bialynicki-Birula and Mycielski in 1976 (cf. \cite{lognls}) to describe the separability of non-interacting subsystems in quantum mechanics.
Employing the uniqueness  and  radial symmetry of positive solutions for  \eqref{nls} and \eqref{lognls1}, it was  proved successfully in \cite{wzq} that up to appropriate scalings, any ground state  of \eqref{nls} converges  to a ground state of the logarithmic  equation \eqref{lognls1} as \(p\searrow 1\). We  also refer to \cite{regu1,1983NA,wzqref26, uniq2014,zfl,normal} and the references therein for   more qualitative results of the logarithmic   equation \eqref{lognls1}.

In this paper, we are interested in the limiting behavior of ground states for the two-component (i.e., $N=2$) fermionic system (\ref{ep}) as $p\searrow 1$.  For this reason, we need to introduce the following logarithmic variational problem
\begin{align}\label{e}
	e^*_2:=&\inf\limits_{\gamma\in \mathcal{P}_2}\mathcal{E}(\gamma),
\end{align}
where $\mathcal{P}_2$ is as in \eqref{pl} with $N=2$, and
\begin{align}\label{func}
	\mathcal{E}(\gamma):=\text{Tr}(-\Delta\gamma)-\inte\rho_\gamma\text{log}\rho_\gamma dx,\ \ \ d\geq1.
\end{align}
Although there exists $\varphi\in H^1(\R^d, \C)$ such that $\inte|\varphi|^2\text{log}|\varphi|^2dx=-\infty$, see \cite{zfl} and the references therein,  it follows from the Lieb--Thirring inequality (cf. \cite[Equation (11)]{ii}) that  the energy functional $\mathcal{E}: \mathcal{P}_2\to\R\cup \{+\infty\}$ is well-defined, see also \eqref{3.29a} below.  Moreover, although \(\mathcal{E}\) is not differentiable on  \(\mathcal{P}_2\),  one can observe that any ground state of the fermionic system \eqref{e}  formally satisfies, up to a unitary transformation, the following two-component logarithmic Schr\"odinger system
\begin{align}\label{1.1}
	\arraycolsep=1.5pt
	\left\{\begin{array}{lll}
		-\Delta u_i-\text{log}\big(|u_1|^2+|u_2|^2\big) u_i=\mu_i u_i\ \
		\mathrm{in}\ \, \R^d,\\[3mm]
		\langle u_i,u_j\rangle_{L^2}=\delta_{ij},\ \  i, j=1,2,
	\end{array}\right.
\end{align}
where   all linear terms are combined together.

We shall prove rigorously in this paper that up to  translations and scalings,   ground states of the logarithmic fermionic  system  $e_2^*$ are the $L^\infty $-limits of   ground states for the  power-law  fermionic  system \eqref{ep} as $p\searrow 1$, where $N=2$. On the other hand, we also analyze  that  if $\gamma=\sum_{i=1}^2|u_i\rangle\langle u_i|$ is a minimizer of  $e_2^*$, then  up to a unitary transformation,  $u=(u_1,\,  u_2)$ satisfies the logarithmic  Schr\"odinger system \eqref{1.1}, where $\mu_1<\mu_2$ are the first two eigenvalues (counted with the multiplicity) of the Dirichlet realization for the operator
$$-\Delta-\log\big(|u_1|^2+|u_2|^2\big) \ \ \mbox{in}\,\ L^2(\Omega_u,\C).$$
Here
$\Omega_u:=\{x\in\R^d:\ |u_1(x)|^2+|u_2(x)|^2>0\}$, and the realization is defined in the quadratic-form sense of \eqref{op0} below.

We also recall from \cite{depino, wzq, 1987log} the following logarithmic Sobolev inequality:
\begin{align}\label{logsobo1}
	\inte |u|^2\text{log}|u|^2dx\leq\frac{d}{2}\text{log}\Big[\frac{2}{\pi d e}\inte|\nabla u|^2dx\Big],\ \   u\in H^1(\R^d, \C)\ \, \mbox{and}\,\ \|u\|_2^2=1,
\end{align}
where the  identity holds, if and only if $u(x)=e^{i\theta}\left(\frac{a}{\pi}\right)^{d/4}e^{-\frac{a}{2}|x-x_0|^2}$ holds for some $a>0$, $x_0\in\mathbb R^d$ and $\theta\in\mathbb R$. We also comment that Gross established in \cite{Gross}  a logarithmic Sobolev inequality for fermions in the noncommutative set of Clifford algebras, based on which Carlen and Lieb in \cite{CarlenLieb}  further analyzed its optimal form. Moreover,  Dolbeault et. al.  in \cite[Corollary 18]{DFLP} proved an optimal Euclidean logarithmic Sobolev inequality for orthonormal systems, for which the existence of optimizers was not addressed yet. By investigating the logarithmic variational problem $e_2^* $, in this paper we shall  establish a sharp logarithmic Sobolev inequality for orthonormal functions in \(\mathbb{R}^d\), distinct from those in \cite{Gross, CarlenLieb, DFLP}, for which we are able to prove the existence of optimizers.


\subsection{Main results}
The purpose of this subsection is to introduce main results of the present paper, all of which are also true essentially in the real-valued range. For simplicity,  we first give the following definition for the logarithmic  Schr\"odinger  system \eqref{1.1}.

\begin{defn}\label{dfn:1} (\emph{Ground states})  A vector   $U=(u_1, u_2 )\in H^1(\R^d,\C )\times H^1(\R^d,\C )$ is called a ground state of the logarithmic  Schr\"odinger  system \eqref{1.1}, if it solves \eqref{1.1} and $\mu_1<\mu_2$ are the first two eigenvalues (counted with multiplicity) of the Dirichlet realization for the operator
$$-\Delta-\log\big(|u_1|^2+|u_2|^2\big)\ \ \mbox{in}\ \, L^2(\Omega_u,\C),$$
where $\Omega_u:=\{x\in\R^d:\,|u_1(x)|^2+|u_2(x)|^2>0\}$, and the realization is understood in the   sense of \eqref{op0} below.
\end{defn}

Our first main result of the present paper is concerned with the following strong connection between  ground states of the logarithmic fermionic system \eqref{e} and ground states of the power-law fermionic  system \eqref{ep} as $p\searrow 1$, where $N=2$.

\begin{thm}\label{thm1}
Let $E_2(p)$ and $e_2^*$ be defined by \eqref{ep} and \eqref{e}, respectively.  Then we have
\begin{enumerate}
\item [(1).]  The ground state energy $E_2(p)$ satisfies
\begin{align}\label{1.5}
\lim\limits_{p\searrow1}(p-1)^{\frac{-2}{2-d(p-1)}}\Big[E_2(p)+\frac{2}{p}(p-1)^{\frac{d(p-1)}{2-d(p-1)}}\Big]=e_2^*>0.
\end{align}

\item [(2).] Let $\sum_{i=1}^2|v_{ip_n}\rangle\langle v_{ip_n}|$ be a minimizer of $E_2(p_n)$ satisfying \eqref{eqv}, where $p_n\searrow1$ as $n\to\infty $. Then  up to a subsequence and a translation if necessary,
\begin{equation}\label{1.9}
\begin{split}
&(p_n-1)^{\frac{-d}{4-2d(p_n-1)}}v_{ip_n}\big((p_n-1)^{\frac{-1}{2-d(p_n-1)}}x\big)\to u_i(x)\\
& \  \mbox{strongly in}\,\ H^1(\R^d, \C)\cap L^\infty(\R^d, \C)\,\  \mbox{as}\,\ n\to\infty,
\end{split}\end{equation}
where
 $\sum_{i=1}^2|u_i\rangle\langle u_i|$  is a minimizer of  the problem  $e_2^*$.
\end{enumerate}
\end{thm}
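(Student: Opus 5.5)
The plan is to reduce part (1) to a limit of rescaled variational problems, then get compactness of rescaled minimizers, and finally upgrade the convergence to $H^1\cap L^\infty$ using the Euler--Lagrange system \eqref{eqv}. Throughout I write $\varepsilon:=p-1$ and $\lambda:=\varepsilon^{1/(2-d\varepsilon)}$, so that $\lambda^{2-d\varepsilon}=\varepsilon$.

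\emph{Step 1: rescaling.} For $\gamma=\sum_{i=1}^2|u_i\rangle\langle u_i|\in\mathcal P_2$ set $v_i(y)=\lambda^{d/2}u_i(\lambda y)$. This map preserves orthonormality and gives
\[
\mathrm{Tr}(-\Delta\gamma_v)=\lambda^2\,\mathrm{Tr}(-\Delta\gamma_u),\qquad \int\rho_v^p=\lambda^{d\varepsilon}\int\rho_u^p=\frac{\lambda^2}{\varepsilon}\int\rho_u^p .
\]
Split $\rho^p=\rho+\rho(\rho^\varepsilon-1)$ and use $\int\rho_u=2$. A direct computation then yields
\[
(p-1)^{\frac{-2}{2-d(p-1)}}\Big[E_2(p)+\tfrac{2}{p}(p-1)^{\frac{d(p-1)}{2-d(p-1)}}\Big]=F_p:=\inf_{\gamma\in\mathcal P_2}\Big\{\mathrm{Tr}(-\Delta\gamma)-\frac1p\int\rho_\gamma\,\frac{\rho_\gamma^{\varepsilon}-1}{\varepsilon}\,dx\Big\}.
\]
Moreover, the rescaled functions in \eqref{1.9} are exactly minimizers of $F_p$. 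Part (1) therefore amounts to $F_p\to e_2^*$, and $e_2^*>0$ should follow from \eqref{logsobo1}. To see this, write $\rho=2|w|^2$ with $\|w\|_2=1$ and use the Hoffmann-Ostenhof bound $\int|\nabla\sqrt{\rho}|^2\le\mathrm{Tr}(-\Delta\gamma)$; then optimize in the scale.

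\emph{Step 2: upper bound and uniform bounds.} For the upper bound I will use that $t\mapsto (s^t-1)/t$ is nondecreasing in $t$, by convexity of the exponential. This gives:
\begin{itemize}
\item on $\{\rho\le1\}$, $\log\rho\le(\rho^\varepsilon-1)/\varepsilon\le0$, so the integrand increases to its limit;
\item on $\{\rho>1\}$, $\log\rho\le(\rho^\varepsilon-1)/\varepsilon\le(\rho^{\varepsilon_0}-1)/\varepsilon_0$, so the integrand is dominated.
\end{itemize}
For any fixed $\gamma$ with $\mathcal E(\gamma)<\infty$, dominated convergence then gives $F_p(\gamma)\to\mathcal E(\gamma)$, hence $\limsup F_p\le e_2^*$.

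For minimizers $\gamma_n$ of $F_{p_n}$, the positive part of the nonlinear term is bounded by $C\int\rho^{1+\varepsilon_0}$. The Lieb--Thirring inequality controls this by $C\,\mathrm{Tr}(-\Delta\gamma)^{\theta}$ with $\theta<1$. Hence $\mathrm{Tr}(-\Delta\gamma_n)$ is bounded, and so is $\int_{\{\rho_n<1\}}\rho_n|\log\rho_n|$.

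\emph{Step 3: compactness (main obstacle).} I would run concentration-compactness on $\rho_n$.
\begin{itemize}
\item \emph{Vanishing.} If $\rho_n\to0$ in $L^q$, then $\int_{\{\rho_n<1\}}\rho_n(1-\rho_n^{\varepsilon_n})/\varepsilon_n\ge\int\rho_n(-\log\rho_n)\,\mathbf 1_{\{\rho_n<1\}}$ minus a vanishing error, and this tends to $+\infty$. That contradicts the bound from Step 2, so vanishing cannot occur.
\item \emph{Dichotomy.} This is the delicate case: the limit has trace $<2$, or the two orbitals separate. I would first establish a strict binding inequality for the logarithmic problem. The idea is that $-\int\rho\log\rho$ scales as $m\mapsto m\,e-m\log m$ under mass dilution, and $-m\log m$ is strictly subadditive. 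This should yield $e_2^*<e^*(\text{mass }m)+e^*(\text{mass }2-m)$, including the split into two one-orbital problems. One can then rule out splitting by an IMS localization. In this localization the nonlinear term localizes up to $o(1)$ errors, since the integrand is local.
\end{itemize}
After translation this gives $u_{in}\rightharpoonup u_i$ with orthonormality preserved. Fatou's lemma on $\{\rho<1\}$ (nonnegative integrand), together with strong $L^{1+\varepsilon_0}$ convergence on $\{\rho>1\}$, gives $\liminf F_{p_n}\ge\mathcal E(\gamma)\ge e_2^*$. Equality throughout forces $\mathrm{Tr}(-\Delta\gamma_n)\to\mathrm{Tr}(-\Delta\gamma)$, so the convergence is strong in $H^1$ and $\gamma$ minimizes $e_2^*$. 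This also completes part (1).

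\emph{Step 4: $L^\infty$ convergence.} The rescaled functions satisfy
\[
-\Delta u_{in}-\frac{\rho_n^{\varepsilon_n}-1}{\varepsilon_n}u_{in}=\tilde\mu_{in}u_{in},
\]
where $\tilde\mu_{in}$ is bounded by the energy bounds. The potential satisfies $(\rho^\varepsilon-1)/\varepsilon\le C(1+\rho^{\varepsilon_0})$. By Kato's inequality, $|u_{in}|$ is then a subsolution of $-\Delta w\le C(1+\rho_n^{\varepsilon_0})w$, where $\rho_n^{\varepsilon_0}$ is uniformly bounded in $L^{q}$ for some $q>d/2$. Moser iteration gives uniform $L^\infty$ bounds and uniform local H\"older bounds. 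Interpolating between strong $L^2$ convergence and uniform $C^{0,\alpha}$ bounds then gives $\|u_{in}-u_i\|_\infty\to0$. The main difficulty remains the dichotomy exclusion in Step 3, specifically the strict binding for the two-orbital logarithmic problem.
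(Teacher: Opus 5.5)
\textbf{The gap is in Step 3, as you suspected.} The scaling heuristic you propose for the strict binding inequality does not survive the orthonormality constraint.

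Mass dilution $\gamma\mapsto t\gamma$ is admissible only for $t\le1$, because of $0\le\gamma\le1$ (the Pauli principle). Since $\mathcal E(t\gamma)=t\mathcal E(\gamma)-t\,\mathrm{Tr}(\gamma)\log t$, scaling down yields only upper bounds of the form $e^*(tM)\le te^*(M)-tM\log t$. Excluding dichotomy needs the opposite: a strict lower bound of $e^*(m)+e^*(2-m)$ over $e_2^*$.

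The identity $e^*(m)=me_1^*-m\log m$ and the strict subadditivity of $-m\log m$ belong to the scalar (bosonic) problem. Compared with the fermionic problem, they give only $e_2^*\ge 2e_1^*-2\log2$ (via Hoffmann-Ostenhof and \eqref{logsobo1}). That bosonic value corresponds to putting both particles in one orbital, which is forbidden here, so it says nothing about whether $e_2^*<2e_1^*$. Yet the split into two one-orbital clusters is exactly the case that must be excluded.

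The paper proves $e_2^*<2e_1^*$ (Lemma \ref{lemstri}) with an explicit trial state. It takes two Gaussians $w$, $w_R$ at distance $R$ and orthonormalizes them via $G_R^{-1/2}$. The kinetic cost of orthonormalization is of order $R^2e^{-R^2/2}$. Convexity of $t\log t$, together with a unit ball just past the midpoint $\frac R2e_1$ where $\log(1+w_R^2/w^2)\gtrsim R$, gives an entropy gain of order $Re^{-(R/2+3)^2}$. This gain wins because $(R/2+3)^2<R^2/2$ for large $R$.

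\textbf{The localization scheme would also fail even granting some binding.} Your IMS splitting produces $\chi_{R_n}\gamma_n\chi_{R_n}$ and $\eta_{R_n}\gamma_n\eta_{R_n}$. These have non-integer traces and lie in neither $\mathcal P_1$ nor $\mathcal P_2$. You would therefore need strict binding against fractional-mass fermionic problems, which you have no means to establish.

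The paper avoids fractional masses with Lewin's geometric localization. It uses the many-body formulation \eqref{eq}, which relies on concavity of $F$. The localized two-body state then becomes a convex combination of $0$-, $1$- and $2$-particle states, which yields
$e_2(p_n)\ge\sum_{k=0}^2\mathrm{Tr}(G_k^{\chi,n})\,[e_k(p_n)+e_{2-k}(p_n)]+o(1)$.
From this, only $\liminf_n[2e_1(p_n)-e_2(p_n)]>0$ is needed to force $\mathrm{Tr}(G_1^{\chi,n})\to0$. That inequality follows from $\limsup e_2(p)\le e_2^*<2e_1^*=2\lim e_1(p)$. The remaining $0$/$2$ mixture is ruled out by the projector identity $\gamma_{\Psi_n}^2=\gamma_{\Psi_n}$.

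\textbf{A minor correction in Steps 2--3.} On $(0,1)$ one has $(1-s^\varepsilon)/\varepsilon\le-\log s$, not the reverse. So a bounded $F_{p_n}$ does not bound $\int_{\{\rho_n<1\}}\rho_n|\log\rho_n|$, and your ``minus a vanishing error'' lower bound is not justified as stated. Vanishing is still excluded: on $\{\rho_n\le\delta\}$, which carries almost all the mass under vanishing, use $(1-s^{\varepsilon_n})/\varepsilon_n\ge(1-\delta^{\varepsilon_n})/\varepsilon_n\to-\log\delta$, then let $\delta\to0$.

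\textbf{The remaining steps are sound and in places simpler than the paper's.} These are the positivity of $e_2^*$ via Hoffmann-Ostenhof and \eqref{logsobo1}, the direct bound on the rescaled multipliers from the energy, and the $L^2$--H\"older interpolation giving $L^\infty$ convergence.
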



Theorem \ref{thm1} in particular gives the existence of ground states
for  the   logarithmic  fermionic system $e_2^*$. We remark that a similar result to  \eqref{1.9} was  previously established in \cite{zfl,wzq} for the convergence from the power-law equation \eqref{nls} to the logarithmic equation \eqref{lognls1}. We also note that   the analysis in \cite{wzq} relies crucially on the uniqueness and radial symmetry of positive solutions for \eqref{nls} and \eqref{lognls1}, whereas the scalar problem in \cite{zfl} involves a single linear coefficient that can be normalized to one, allowing the rescaled problem to be analyzed via the corresponding Nehari manifold.  Unfortunately,   the existing approaches of  \cite{zfl,wzq} are not applicable to the proof of Theorem \ref{thm1}. More precisely, one cannot generally expect to eliminate the two distinct Lagrange multipliers of \eqref{1.1} through a suitable scaling transformation.  Meanwhile, the uniqueness and radial symmetry of ground states for both \eqref{ep} and \eqref{e} are still  challenging open questions, due to their orthonormal constraints.

In order to overcome above difficulties, we shall carry out the proof of Theorem \ref{thm1} by  three steps.
As the first step of proving Theorem \ref{thm1}, we shall establish the following strict binding inequality
\begin{equation}\label{1.12M}
e_2^*<2e_1^*,
\end{equation}
 where $e_1^*$ denotes the ground-state energy of the
one-component logarithmic problem defined in \eqref{2.4b}. Towards this purpose,  through constructing a trial density matrix $\gamma_R\in\mathcal P_2$, we shall prove that the negative logarithmic correction dominates the positive kinetic energy correction  in the energy difference $\mathcal E(\gamma_R)-2e_1^*$ as $R\to\infty$. 
In the second step of proving Theorem \ref{thm1},  since two Lagrange multipliers of (\ref{eqv}) cannot in general be normalized simultaneously by a single scaling, we shall derive in Lemma \ref{lem2.1}  the refined estimate of the rescaled kinetic energy, based on which we then prove a uniform bound of Lagrange multipliers.  As the last step of proving Theorem \ref{thm1}, we employ the geometric localization method of \cite{ge}, together with the strict binding inequality (\ref{1.12M}), to exclude the case where the two-particle system splits into two spatially separated one-particle clusters. This helps us finally establish the $H^1$ convergence  \eqref{1.9}.

Our second main result of the present paper is to investigate the following qualitative properties of the  minimizers for $e_2^*$ defined by \eqref{e}.

\begin{thm}\label{thm2}
	Let $\big\{\gamma_n=\sum_{i=1}^2|u_{in}\rangle\langle u_{in}|\big\}$ be a minimizing sequence  of $e_2^*$, and suppose    $\gamma=\sum_{i=1}^2|u_i\rangle\langle u_i|$ is a minimizer of $e_2^*$. Then we have
	\begin{enumerate}
		\item [(1).] The sequence $\{(u_{1n}, u_{2n})\}$ is, up to  translations, relatively compact in $H^1(\R^d,\C)\times H^1(\R^d,\C)$. 	
		
		\item [(2).]  Up to a unitary transformation,  $u=(u_1, u_2 )\in C^2(\R^d, \C)\times C^2(\R^d, \C)$ is a ground state of  the logarithmic  Schr\"odinger  system  \eqref{1.1} and satisfies the  Gaussian decay
		$$
		|u_1(x)|+|u_2(x)|\leq C e^{-\theta|x|^2}\ \ \text{in}\ \, \R^d
		$$
		for some constants $\theta>0$ and $C>0$.
	\end{enumerate}
\end{thm}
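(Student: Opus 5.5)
For part (1) I would run a concentration-compactness argument directly on the logarithmic problem, using the strict binding inequality \eqref{1.12M}, $e_2^*<2e_1^*$, as the key input. The first step is a priori bounds. Write $\rho_n=\rho_{\gamma_n}$. By the Lieb--Thirring inequality for orthonormal systems, $\rho_n\log\rho_n$ on $\{\rho_n>1\}$ is controlled by $\int\rho_n^{1+\epsilon}\lesssim (\mathrm{Tr}(-\Delta\gamma_n))^{d\epsilon/2}$. Since $-\rho\log\rho\ge0$ on $\{\rho\le1\}$, the energy bound gives
\[
\sup_n\Big(\mathrm{Tr}(-\Delta\gamma_n)+\inte \rho_n|\log\rho_n|\,\chi_{\{\rho_n\le 1\}}\,dx\Big)<\infty .
\]
This gives $H^1$ bounds on $u_{in}$. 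The bound on the small-density part of $-\rho\log\rho$ also supplies the tightness information that is usually missing for logarithmic problems.

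The second step rules out vanishing. If $\sup_y\int_{B_1(y)}\rho_n\to0$, Lions' lemma gives $\rho_n\to0$ in $L^q$ for $1<q<1+2/d$. Then $\int\rho_n\log\rho_n\chi_{\{\rho_n>1\}}\to0$, and the energy would be asymptotically $\ge0$. This contradicts $e_2^*<0$: scaling a fixed Gaussian-like trial state shows $e_2^*=-\infty$ is impossible by \eqref{logsobo1}, and dilation makes the energy negative. After translating, $\gamma_n\rightharpoonup\gamma\ne0$ weakly. The third step excludes dichotomy. I would use the IMS localization method of \cite{ge} with cutoffs $\chi_R,\eta_R$ to split $\gamma_n$ into $\chi_R\gamma_n\chi_R$ and $\eta_R\gamma_n\eta_R$, which have masses $\lambda$ and $2-\lambda$, plus an error that vanishes as $R\to\infty$. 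For this I need the subadditivity of $F(\rho)=-\rho\log\rho$ under the splitting $\rho=\chi_R^2\rho+\eta_R^2\rho$. It holds up to the term $-\rho(\chi^2\log\chi^2+\eta^2\log\eta^2)\ge0$, which is supported in an annulus and is small by the uniform small-density bound above.

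Next I would define the generalized energies $e^*(\lambda)$ for density matrices $0\le\gamma\le1$ with $\mathrm{Tr}\,\gamma=\lambda\in[0,2]$. The scaling $\gamma\mapsto t\gamma$ changes the energy by the explicit term $-\lambda t\log t$, which yields a strict subadditivity $e^*(\lambda)+e^*(2-\lambda)>e_2^*$ for fractional splittings. The only splitting not covered this way is $1+1$. That case is excluded exactly by \eqref{1.12M}, using $e^*(1)=e_1^*$. Hence $\mathrm{Tr}\,\gamma=2$, and $u_{in}\to u_i$ strongly in $L^2$ after unitary rotation within span. The density converges in $L^1\cap L^q$. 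Lower semicontinuity of the kinetic term, together with Fatou for the $\rho\le1$ part and convergence of the $\rho>1$ part, shows $\gamma$ is a minimizer with equality of the kinetic energies, so the convergence is strong in $H^1$. The main obstacle is this dichotomy step. It requires the relaxation to mixed states, and $\gamma$ must be shown to be rank two, using that minimizers of the relaxed problem are projections.

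For part (2) I would derive the Euler--Lagrange system. The lack of differentiability at $\rho=0$ means I would vary only with $\varphi\in C_c^\infty(\Omega_u)$, and also with perturbations on the full space via the one-sided inequality $-(\rho+\delta)\log(\rho+\delta)$. Diagonalizing the multiplier matrix by a unitary gives \eqref{1.1} on $\Omega_u$. The minimality against replacing $u_i$ by other functions in the quadratic form gives that $\mu_1,\mu_2$ are the first two Dirichlet eigenvalues. Regularity follows from a bootstrap: $\log\rho\,u_i$ is in $L^q_{loc}$ since $|u\log|u|^2|\lesssim |u|^{1-\epsilon}+|u|^{1+\epsilon}$. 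This gives $C^{1,\alpha}$ and then $C^2$ where $\rho>0$. Showing $\Omega_u=\R^d$ via a strong maximum principle for $\rho$ would be convenient, though it is not needed. For the Gaussian decay I would compare with supersolutions. The function $w=\rho^{1/2}$ satisfies, by Kato's inequality, $-\Delta w\le(\log w^2+\mu_2)w$. For large $|x|$, where $w\to0$, we have $\log w^2\le -M$. Iterating the comparison with $Ce^{-\theta|x|^2}$, which is a supersolution of $-\Delta v=(-c|x|^2)v$ once one shows first $\log w^2\le -c|x|^2$ from an initial exponential decay, yields $|u_1|+|u_2|\le Ce^{-\theta|x|^2}$.
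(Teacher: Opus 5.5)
There are genuine gaps in both parts.

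\textbf{Part (1): vanishing.} Your exclusion of vanishing rests on a false sign: $e_2^*$ is \emph{positive}. Combining \eqref{ho} with \eqref{logsobo1} applied to $\sqrt{\rho_\gamma/2}$ gives $\inte\rho_\gamma\log\rho_\gamma\,dx\le 2\log 2+d\log\frac{\Tr(-\Delta\gamma)}{\pi d e}$. Minimizing over $\Tr(-\Delta\gamma)$ yields $\mathcal E(\gamma)\ge 2d+d\log\pi-2\log 2=2e_1^*-2\log 2>0$ on $\mathcal P_2$, consistent with Lemma~\ref{lemstri}. Dilation never makes the energy negative, so ``energy asymptotically $\ge0$'' contradicts nothing. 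What vanishing actually does is push the whole mass into $\{\rho_n\le\varepsilon\}$ for every $\varepsilon>0$, since the mass of $\{\rho_n>\varepsilon\}$ is at most $\varepsilon^{1-q}\inte\rho_n^q\,dx\to0$. Hence $\inte\rho_n|\log\rho_n|\chi_{\{\rho_n\le1\}}\,dx\to\infty$, contradicting the bound from your first step. This is essentially the paper's argument via \eqref{2.8a} and \eqref{3.1}.

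\textbf{Part (1): dichotomy.} Your scaling argument does not give strict subadditivity for fermions. Scaling up, $\gamma\mapsto t\gamma$ with $t>1$, violates $t\gamma\le1$, and the localized pieces $\chi_R\gamma_n\chi_R$ typically have norm close to $1$. Scaling down from mass $\mu\in(1,2)$ to mass $1$ only yields the bosonic bound $e^*(\mu)\ge\mu e_1^*-\mu\log\mu$. This gives $e^*(\lambda)+e^*(2-\lambda)\ge 2e_1^*-\lambda\log\lambda-(2-\lambda)\log(2-\lambda)$. As $\lambda\to0$ the right side tends to $2e_1^*-2\log2$, which by the bound above is $\le e_2^*$. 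So unbalanced splittings are not excluded, and your claim that only the $1+1$ split escapes is unproved.

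You need an extra ingredient. One option: decompose any $0\le\gamma\le1$ with $\Tr\gamma=\mu\in(1,2)$ as an average of rank-two projections (weight $\mu-1$) and rank-one projections (weight $2-\mu$), a variant of Lemma~\ref{lem-projector-decomposition}. With the Jensen argument of Lemma~\ref{lemrelax} this gives $e^*(\mu)\ge(2-\mu)e_1^*+(\mu-1)e_2^*$. Hence $e^*(\lambda)+e^*(2-\lambda)-e_2^*\ge\lambda(2e_1^*-e_2^*)-\lambda\log\lambda>0$ for $\lambda\in(0,1]$.

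The paper avoids fractional masses altogether by Lewin's Fock-space localization, in which only the particle numbers $0,1,2$ occur. The weight of the one-particle sector is killed by \eqref{1.12M}, and the residual $0$/$2$ mixture is excluded because $\gamma_{\Psi_n}$ is a projection. Your worry about showing $\gamma$ has rank two is unnecessary: once $\Tr\gamma=2$, $u_{in}\to u_i$ in $L^2$ and orthonormality passes to the limit.

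\textbf{Part (2): simplicity of $\mu_1$.} Definition~\ref{dfn:1} requires $\mu_1<\mu_2$, i.e.\ that the lowest eigenvalue of the Dirichlet operator $H^D_\gamma$ on $\Omega_u$ be simple. Your proposal never addresses this. Since $\Omega_u$ is not known to be connected (you explicitly leave $\Omega_u=\R^d$ open), Perron--Frobenius or Harnack arguments are unavailable. The paper supplies the missing idea through a second use of \eqref{1.12M}:
\begin{itemize}
\item if $\dim\ker(H^D_\gamma-\mu_1)\ge2$, then taking $|f|$ and $g_\pm$ produces two normalized nonnegative first eigenfunctions $\phi_1,\phi_2$ with $\phi_1\phi_2=0$ a.e.;
\item then $\mu_1=\mu_2$, and the convexity estimate \eqref{3.76} gives $\mathcal E(|\phi_1\rangle\langle\phi_1|+|\phi_2\rangle\langle\phi_2|)\le e_2^*$;
\item disjoint supports split the entropy, so this energy is at least $2e_1^*$, contradicting \eqref{1.12M}.
\end{itemize}

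\textbf{Part (2): smaller points.} Testing only with $C_c^\infty(\Omega_u)$ gives the system on $\Omega_u$. But \eqref{1.1} and $u_i\in C^2(\R^d)$ need it on all of $\R^d$; the paper gets this from $\mathcal E\in C^1(\mathcal W\times\mathcal W)$ (Lemma~\ref{lemc1}). Your Gaussian step is circular as phrased, since $\log w^2\le-c|x|^2$ is already the Gaussian bound. Instead, fix $R$ so large that $w$ is small on $\{|x|>R\}$. Compare $w$ there directly against $Ae^{-\theta|x|^2}$, with $A=e^{\theta R^2}\sup_{|x|=R}w$ and $\theta>0$ small depending on $R$. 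The comparison works because $s\mapsto s(\log s^2+\mu_2)$ is decreasing for small $s>0$.
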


The primary challenging point of proving Theorem \ref{thm2} (1) is to rule out the vanishing of the minimizing sequence for \( e_2^* \). To address this difficulty, we employ the asymptotic formula  \eqref{1.5} to relate the logarithmic term of (\ref{func}) to the limit of the renormalized power nonlinearity in (\ref{ep}) as $p\searrow 1$.  On the other hand, in order to prove Theorem \ref{thm2} (2), since the potential $-\log(|u_1|^2+|u_2|^2) $ may be singular on the boundary of $\Omega_\gamma:=\{x\in\mathbb R^d:\rho_\gamma(x)>0\},$ which may be disconnected,  it is generally difficult to prove that  $u=(u_1, u_2)$ is a ground state of  the logarithmic  Schr\"odinger  system \eqref{1.1}. To handle this difficulty,  we shall define  the corresponding Dirichlet operator $H_\gamma^D$ in (\ref{op0}), from which we then prove that the operator  $H_\gamma^D$ has a compact resolvent. The min-max  argument thus yields that the corresponding two Lagrange multipliers  $\mu_1\leq\mu_2$ are precisely the first two eigenvalues of  $H_\gamma^D$, counted with the multiplicity.
Finally, without imposing any connectedness assumption on \(\Omega_\gamma\), we shall prove the simplicity of the first eigenvalue $\mu_1$ by using the strict binding inequality (\ref{1.12M}).

As a byproduct of Theorem 1.1, we next relate  the minimization problem $e_2^*$ to the following  logarithmic Sobolev inequality for orthonormal functions.

\begin{thm}\label{thmlog}
Suppose that $\{u_i\}_{i\geq1}\subset H^1(\mathbb R^d,\mathbb C)$
is an orthonormal system in $L^2(\mathbb R^d,\mathbb C)$,
and let $\{n_i\}_{i\geq1}\subset[0,1]$ satisfy $\sum_{i\geq1}n_i=2$ and  $\sum_{i\geq1}n_i\int_{\mathbb R^d}|\nabla u_i|^2\,dx<\infty.$
Then
\begin{align}\label{logsobo}
\int_{\R^d}\Big(\sum\limits_{i\geq1}n_i|u_i|^2\Big)\log\Big(\sum\limits_{i\geq1}n_i|u_i|^2\Big)dx
\leq d\log\bigg[ \frac{e^{1-\frac{e_2^*}{d}}}{d}\sum\limits_{i\geq1}n_i\inte |\nabla u_i|^2dx
\bigg],
\end{align}
where the  best constant can be  attained, and the constant $e_2^*>0$ is as in \eqref{e}.
\end{thm}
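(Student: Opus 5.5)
The plan is to derive \eqref{logsobo} from the variational problem $e_2^*$ by a scaling argument, after first extending $e_2^*$ to mixed states. Set $\gamma=\sum_{i\ge1}n_i|u_i\rangle\langle u_i|$. Then $0\le\gamma\le1$, $\Tr\gamma=2$ and $\Tr(-\Delta\gamma)=\sum_i n_i\|\nabla u_i\|_2^2<\infty$.

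\textbf{Step 1 (relaxation).} I will show that
\[
\inf\big\{\mathcal E(\gamma):\ 0\le\gamma\le1,\ \Tr\gamma=2,\ \Tr(-\Delta\gamma)<\infty\big\}=e_2^*.
\]
The inequality ``$\le$'' is trivial. For ``$\ge$'', the cleanest route goes through Theorem \ref{thm1}(1). For the power functional with exponent $p>1$, the map $\gamma\mapsto -\frac1p\int\rho_\gamma^p$ is concave, and $\Tr(-\Delta\gamma)$ is linear. Hence the infimum of the power energy over the convex set of mixed states equals its infimum over its extreme points, which are the projections in $\mathcal P_2$; this is the standard fact used in \cite{i}. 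So $E_2(p)$ is unchanged under relaxation.

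Next, apply the same rescaling as in \eqref{1.5} to an arbitrary mixed state $\gamma$. Writing $\rho^p=\rho\,e^{(p-1)\log\rho}$, the rescaled and renormalized power energy converges to $\mathcal E(\gamma)$ as $p\searrow1$. To make this a justified limit (at least a $\liminf\le$ statement, via dominated convergence after splitting into the regions $\rho\le1$ and $\rho>1$), I will use the Lieb--Thirring bound $\int\rho^{1+2/d}\lesssim \Tr(-\Delta\gamma)$, which holds for $0\le\gamma\le1$. Since the rescaled power energy of $\gamma$ is bounded below by the renormalized $E_2(p)$, letting $p\searrow1$ and using \eqref{1.5} gives $\mathcal E(\gamma)\ge e_2^*$.

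Alternatively, one could argue directly: $\rho\mapsto-\int\rho\log\rho$ is concave, so $\mathcal E$ is concave on mixed states and one hopes its infimum is attained at extreme points. But this is delicate on an infinite-dimensional noncompact set, which is why I prefer the route through \eqref{1.5}. This step is the main obstacle. The points needing care are the uniform integrability near $\rho\to0$, where $\rho\log\rho$ is not controlled by $L^1\cap L^{1+2/d}$ alone, and the sign and scaling bookkeeping in \eqref{1.5}. If needed, I would truncate by first treating $\gamma$ of finite rank with compactly supported, smooth $u_i$, and then approximate the general $\gamma$ using lower semicontinuity of $-\int\rho\log\rho$ on the region $\rho\le1$ via Fatou.

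\textbf{Step 2 (scaling).} For $\lambda>0$, set $\gamma_\lambda$ to have orbitals $\lambda^{d/2}u_i(\lambda x)$. Then $\Tr(-\Delta\gamma_\lambda)=\lambda^2K$, where $K=\sum_i n_i\|\nabla u_i\|^2$, and, using $\int\rho=2$,
\[
\int\rho_{\gamma_\lambda}\log\rho_{\gamma_\lambda}=\int\rho\log\rho+2d\log\lambda.
\]
Step 1 applied to $\gamma_\lambda$ gives, for every $\lambda>0$,
\[
\int\rho\log\rho\le\lambda^2K-2d\log\lambda-e_2^*.
\]
Minimizing over $\lambda$ gives $\lambda^2=d/K$, so that
\[
\int\rho\log\rho\le d-d\log(d/K)-e_2^*=d\log\Big[\frac{e^{1-e_2^*/d}}{d}K\Big],
\]
which is exactly \eqref{logsobo}.

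\textbf{Step 3 (optimality).} Take a minimizer $\gamma=\sum_{i=1}^2|u_i\rangle\langle u_i|$ of $e_2^*$, which exists by Theorem \ref{thm1}(2), and choose $n_1=n_2=1$. Since $\lambda=1$ must be optimal for $\lambda\mapsto\mathcal E(\gamma_\lambda)$, we get $2K=2d$, i.e.\ $K=d$. Then both chains of inequalities above become equalities, so the constant in \eqref{logsobo} is sharp and is attained by this $\gamma$, and also by all of its scalings.
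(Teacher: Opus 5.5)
Your proof is correct. Steps 2 and 3 coincide with the paper's scaling and attainment arguments, with one small addition needed: as the paper notes, $K=\Tr(-\Delta\gamma)>0$, since otherwise every $u_i$ with $n_i>0$ would be constant. This is what lets you set $\lambda^2=d/K$.

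The genuine difference is the relaxation step $\inf_{\mathcal K_2}\mathcal E=e_2^*$.

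\emph{The paper's route.} It proves the relaxation directly for the logarithmic functional. It writes any $\gamma=\sum_i n_i|u_i\rangle\langle u_i|\in\mathcal K_2$ as an average $\int_0^1\gamma_t\,dt$ of rank-two projections $\gamma_t\in\mathcal P_2$. To do this, it partitions $\R/\mathbb{Z}$ into arcs of length $n_i/2$ and pairs $t$ with $t+\frac12$. Then both $\Tr(-\Delta\gamma)$ and $\rho_\gamma$ are exact averages over $t$. Jensen's inequality for the regularized convex functions $s\log(s+\varepsilon)$, with $\varepsilon\searrow0$, then gives $\mathcal E(\gamma)\ge\int_0^1\mathcal E(\gamma_t)\,dt\ge e_2^*$.

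\emph{Your route.} You transfer the relaxation from the power-law problem, where it is available from \cite{i}, and then let $p\searrow1$ for a fixed $\gamma$ using \eqref{1.5}. This works. The step you flagged as the main obstacle is in fact harmless:
\begin{itemize}
\item If $\int\rho_\gamma\log\rho_\gamma=-\infty$, then $\mathcal E(\gamma)=+\infty$ and there is nothing to prove.
\item Otherwise, the pointwise inequality \eqref{2.8a} gives, for every $p$,
\[
e_2(p)\le\mathcal E_p(\gamma)\le\Tr(-\Delta\gamma)-\frac1p\int\rho_\gamma\log\rho_\gamma\,dx,
\]
exactly as in the proof of Lemma~\ref{lemstri}.
\end{itemize}
So no dominated convergence, uniform integrability or truncation is needed.

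\emph{What each buys.} Your argument is shorter, but it rests on two external inputs:
\begin{itemize}
\item the hard half of Theorem~\ref{thm1}(1), namely $\liminf_{p\searrow1}e_2(p)\ge e_2^*$, which uses the full compactness analysis of Lemma~\ref{prop} and the strict binding inequality;
\item the power-law relaxation result of \cite{i}.
\end{itemize}
The paper's decomposition is elementary and self-contained, and it is independent of the $p\searrow1$ asymptotics. It also yields the finer inequality $\mathcal E(\gamma)\ge\int_0^1\mathcal E(\gamma_t)\,dt$. For instance, this shows that for any minimizer over $\mathcal K_2$, almost every $\gamma_t$ is a minimizer over $\mathcal P_2$.
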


\begin{rem} Theorem \ref{thmlog} provides a sharp logarithmic Sobolev inequality for orthonormal functions, which can be regarded  as an orthogonal generalization of the classical Euclidean logarithmic Sobolev inequality \eqref{logsobo1}, which was studied  in \cite{depino, wzq, 1987log} and the references therein.
On the other hand,  the proof  of Theorem \ref{thmlog} essentially  yields  (cf. \eqref{M:3.42} below) the following family of logarithmic Sobolev
inequalities:  for  any $ \lambda>0$,
\begin{equation}\label{1:inequality}
\int_{\mathbb R^d}\rho_\gamma\log\rho_\gamma\,dx\leq \lambda^2\Tr(-\Delta\gamma)-\big(e_2^*+2d\log\lambda\big), \ \, \forall\, \gamma\in \mathcal{K}_2,
\end{equation}
where  the identity holds if $\lambda=1$ and $\gamma$ is a minimizer of $e_2^*$, and the set $\mathcal{K}_2$ is defined by
\begin{align*}
	\mathcal{K}_2
	:=
	\big\{
	\gamma:\ 
	0\leq \gamma=\gamma^*\leq 1,\
	\Tr(\gamma)=2,\
	\Tr(-\Delta\gamma)<\infty
	\big\}.
\end{align*}
We remark that the inequality (\ref{1:inequality}) can be  thought of as an orthogonal generalization of the   logarithmic Sobolev inequality analyzed in \cite{analysis, wzq} and the references therein.
\end{rem}

As addressed in Remark \ref{rem} below, we comment that every minimizer of $e_2^*$ is an optimizer of \eqref{logsobo}, and  up to a scaling,   every optimizer  of \eqref{logsobo} is also a minimizer of the following relaxed problem 
\begin{align}\label{1.14}
	e_2^*=\inf\limits_{\gamma\in\mathcal{K}_2}\mathcal{E}(\gamma).
\end{align}
Actually,  Theorem \ref{thmlog} is a direct consequence of the equivalence \eqref{1.14}. Moreover, since the logarithmic term $\inte\rho\log\rho dx$ is not continuous  and exhibits a singularity as $\rho\to0^+$,
the  finite-rank approximation  argument of \cite{i} used for the power-law problem (\ref{ep}) does not directly apply to the logarithmic nonlinear problem (\ref{1.14}). To overcome this difficulty in proving Theorem \ref{thmlog}, we shall construct a measurable decomposition $\{\gamma_t\}_{t\in[0, 1)}$  of every admissible mixed state $\gamma\in\mathcal{K}_2$ into rank-two orthogonal  projections, from which we finally prove that the average energy of the resulting projections does not exceed the energy of  \(\gamma\), in the sense that $\mathcal{E}(\gamma)\geq \int_0^1 \mathcal{E}(\gamma_t)dt$.

This paper is organized as follows. In Section \ref{sec2}, we shall derive some estimates for $E_2(p)$ and its minimizers as $p\searrow1$. In Section \ref{section3}, we then analyze the convergence of minimizers for $E_2(p)$ as $p\searrow1$, based on which Subsection 3.1 is devoted to the proof of Theorem \ref{thm1}. The proofs of Theorems  \ref{thm2} and \ref{thmlog}  are addressed in Section 4, which is concerned with the qualitative properties of the logarithmic  fermionic problems $e_2^*$ and the  logarithmic Sobolev inequality (\ref{logsobo}). 



\section{Some Estimates of $E_2(p)$  as $p\searrow1$}\label{sec2}

The purpose of this section is to establish some analytical estimates, which are needed to study the convergence of the energy $E_2(p)$ and its minimizers as $p\searrow1$,  for which we always suppose that $d\geq 1$ and $p\in \big(1,\, \min(2, 1+2/d)\big)$.

Let $E_2(p)$  and  $e_2^*$ be defined by \eqref{ep} and \eqref{e}, respectively. Define
\begin{equation}\label{problem}
e_N(p):=\inf\limits_{\gamma\in\mathcal{P}_N}\mathcal{E}_p(\gamma), \ \, N=1, 2,
\end{equation}
where  $\mathcal{P}_N$ is as in \eqref{pl},  and
$$
\mathcal{E}_p(\gamma):=\text{Tr}(-\Delta\gamma)-\frac{1}{p(p-1)}\inte\big(\rho_\gamma^p-\rho_\gamma\big)dx.
$$
Note that for any $\tilde{\gamma}\in\mathcal{P}_N$,
\begin{align}\label{2.4a}
\text{Tr}(-\Delta\tilde{\gamma})-\frac{1}{p}\inte\rho_{\tilde{\gamma}}^pdx
=&\alpha_p^2\text{Tr}(-\Delta\gamma)-\frac{\alpha_p^2}{p(p-1)}\inte(\rho_{\gamma}^p-\rho_{\gamma})dx-\frac{N\alpha_p^2}{p(p-1)}\\
=&\alpha_p^2\Big[\mathcal{E}_p(\gamma)-\frac{N}{p(p-1)}\Big],\nonumber
\end{align}
where $\gamma(x,y):=\alpha_p^{-d}\tilde{\gamma}\big(\alpha_p^{-1}x,\ \alpha_p^{-1}y\big)$ and  $ \alpha_p:=(p-1)^{\frac{1}{2-d(p-1)}}$.
This yields that
\begin{align}\label{2.4}
e_N(p)=
(p-1)^{\frac{-2}{2-d(p-1)}}\Big[E_N(p)+\frac{N}{p}(p-1)^{\frac{d(p-1)}{2-d(p-1)}}\Big],
\end{align}
where the problem $E_N(p)$ is given by \eqref{ep}.


Let $e_N^*$ and $e_N(p)$ be defined by \eqref{e} and \eqref{problem}, respectively,
where $N=1,2$, and
\begin{align}\label{2.4b}
e_1^*:=\inf\Big\{\inte |\nabla u|^2dx-\inte |u|^2\text{log}|u|^2dx:\, u\in H^1(\R^d,\C),\ \|u\|_2^2=1\Big\}.
\end{align}
The following lemma presents the energy relationship between
$e_N^*$ and $e_N(p)$ as $p\searrow1$.


\begin{lem} \label{lemstri}
The energy $e_2(p)$ satisfies
$$
0<\liminf\limits_{p\searrow1}e_2(p)\leq\limsup\limits_{p\searrow1}e_2(p)\leq e_2^*<2e_1^*=2\lim\limits_{p\searrow1}e_1(p)=2d+d \log\, \pi.
$$
\end{lem}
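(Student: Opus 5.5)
The plan is to establish the chain from right to left: first $e_1^*$ exactly, then $e_1(p)\to e_1^*$, then the upper bound $\limsup e_2(p)\le e_2^*$, then the strict binding $e_2^*<2e_1^*$, and finally the positive lower bound on $\liminf e_2(p)$.

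\emph{Computing $e_1^*$.} By the logarithmic Sobolev inequality \eqref{logsobo1}, for $\|u\|_2=1$ and $T:=\|\nabla u\|_2^2$ we have
\[
\int|\nabla u|^2-\int|u|^2\log|u|^2\ge T-\tfrac d2\log\big(\tfrac{2T}{\pi d e}\big).
\]
Minimizing over $T>0$ gives $T=d/2$ and the value $\tfrac d2+\tfrac d2\log(\pi e)=d+\tfrac d2\log\pi$. This bound is attained by the Gaussian with $a=1$. Hence $e_1^*=d+\tfrac d2\log\pi$, and $2e_1^*=2d+d\log\pi$.

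\emph{Convergence $e_1(p)\to e_1^*$.} For the lower bound we use $\frac{s^{p}-s}{p-1}\le$ something controllable. Concretely, by convexity of $t\mapsto s^t$ the quotient $\frac{s^p-s}{p-1}$ is increasing in $p$ and tends to $s\log s$. For the lower bound it is cleaner to use the scalar result of \cite{wzq}: $e_1(p)$ is an explicit rescaling of $E_1(p)$ via \eqref{2.4}, and the convergence of $E_1(p)$ is known there. Alternatively, we combine the Gagliardo--Nirenberg inequality with optimal constant and its known $p\to1$ asymptotics, which reproduce the log-Sobolev constant. The upper bound $\limsup e_1(p)\le e_1^*$ follows by testing with the Gaussian, since $\int\frac{\rho^p-\rho}{p(p-1)}\to\int\rho\log\rho$ by dominated convergence.

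\emph{Upper bound $\limsup_{p\searrow1}e_2(p)\le e_2^*$.} Given $\gamma\in\mathcal P_2$ with $\mathcal E(\gamma)<\infty$, we have $\rho_\gamma\log\rho_\gamma\in L^1$. Since $\rho\in L^1\cap L^{q}$ for some $q>1$, $\rho^p\le \rho+\rho^q$ on the relevant range, so dominated convergence gives $\mathcal E_p(\gamma)\to\mathcal E(\gamma)$. Taking the infimum over $\gamma$ yields the claim.

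\emph{Strict binding $e_2^*<2e_1^*$ (main obstacle).} We take the Gaussian minimizer $g$ of $e_1^*$ and set $g_\pm=g(\cdot\mp Re_1)$. We orthonormalize via $u_\pm=(g_+\pm g_-)/\sqrt{2(1\pm s)}$ with $s=\langle g_+,g_-\rangle=e^{-R^2}$, so that $\rho_\gamma=\frac{g_+^2+g_-^2-2s g_+g_-}{1-s^2}$ and $\mathrm{Tr}(-\Delta\gamma)=\frac{2\|\nabla g\|^2-2s\langle\nabla g_+,\nabla g_-\rangle}{1-s^2}$. The kinetic and normalization corrections are $O(R^2e^{-R^2})$. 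The gain comes from the overlap in the entropy term: $-\int(a+b)\log(a+b)$ versus $-\int a\log a-\int b\log b$ with $a=g_+^2,b=g_-^2$. The difference $\int a\log(1+b/a)+b\log(1+a/b)$ is concentrated near the midplane. There $a\approx b\approx e^{-R^2}e^{-|x'|^2}$ in a slab of width of order $1/R$ where neither dominates, giving a gain of order $e^{-R^2}/R$. This is comparable to $R^2e^{-R^2}$, so the naive estimate is too crude and must be refined, and this is the step I expect to be delicate. The key refinement is that the two Gaussian tails overlap over a region of width $\sim1$, since $a/b=e^{4Rx_1}$. We therefore compute exactly: $\int a\log(1+b/a)\,dx$ equals, after substituting $t=x_1$, a Gaussian integral with weight $e^{-(x_1-R)^2}\log(1+e^{-4Rx_1})$. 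The region $x_1<0$ gives a contribution of about $\int_{-\infty}^0 e^{-(x_1-R)^2}(-4Rx_1)\,dx_1\sim e^{-R^2}/R$ times constants. Meanwhile, exact cancellations in the kinetic term (using $-\Delta g=(\mu+\log g^2)g$ to rewrite $\langle\nabla g_+,\nabla g_-\rangle$ and the cross term $-2sg_+g_-$ in $\rho$) reduce the positive correction to a matching order. The comparison of these precise constants, using the Euler--Lagrange equation of $g$, must show that the net is negative. If this bookkeeping fails, I would instead use dilated Gaussians or a variational perturbation of the second orbital.

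\emph{Positive lower bound $\liminf e_2(p)>0$.} By \eqref{2.4}, $e_2(p)$ is the rescaled $E_2(p)$. We bound $\mathcal E_p(\gamma)\ge\mathrm{Tr}(-\Delta\gamma)-\frac1{p(p-1)}\int(\rho^p-\rho)$. We use the Lieb--Thirring inequality $\int\rho^{1+2/d}\le C\,\mathrm{Tr}(-\Delta\gamma)$ together with interpolation $\int\rho^p\le 2^{\theta}\big(\int\rho^{1+2/d}\big)^{1-\theta}$, where $1-\theta=\frac{d(p-1)}{2}$. Writing $K=\mathrm{Tr}(-\Delta\gamma)$, this gives
\[
\frac{\int\rho^p-2}{p(p-1)}\le \frac{2\big((CK/2)^{d(p-1)/2}-1\big)}{p(p-1)}\to d\log(CK/2),
\]
uniformly for $K$ in compacts. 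Then $\mathcal E_p\gtrsim K-d\log K-c$, and we need the constant to be positive. To obtain this, we instead compare with a one-body bound: by the orthonormal (Lieb--Thirring) structure, $K\ge$ twice the sharp scalar kinetic bound, and we apply the scalar log-Sobolev argument to $\rho/2$, giving $\mathcal E\ge 2e_1^*-2\log 2\cdot$(correction). This quantity is positive provided $d+d\log\pi>2\log 2$. In low dimensions this must be checked, and if it fails we use the Lieb--Thirring sharp bound instead.
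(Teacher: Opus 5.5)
Your computation of $e_1^*$ via the log-Sobolev inequality and your $\limsup$ bound are fine. For the latter, the one-sided inequality $\frac{\rho^p-\rho}{p-1}\ge\rho\log\rho$ from your monotonicity remark is enough, and it is what the paper uses; $\rho^p\le\rho+\rho^q$ alone does not dominate the difference quotient. Two steps, however, are not established.

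\textbf{Strict binding.} You leave $e_2^*<2e_1^*$ open because you believe the overlap costs are of order $R^2e^{-R^2}$. They are not. With your normalization, $\|\nabla g\|_2^2=\frac d2$ and $\langle\nabla g_+,\nabla g_-\rangle=(\frac d2-R^2)s$. The $\frac d2$ terms cancel exactly in your own formula:
\[
\mathrm{Tr}(-\Delta\gamma)-2\|\nabla g\|_2^2=\frac{2s^2\|\nabla g\|_2^2-2s\langle\nabla g_+,\nabla g_-\rangle}{1-s^2}=\frac{2R^2s^2}{1-s^2}=O\big(R^2e^{-2R^2}\big).
\]
This is quadratic in $s=e^{-R^2}$. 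The same holds for the cross term and the normalization in $\rho_\gamma$. By convexity of $t\log t$,
\[
\int\rho_\gamma\log\rho_\gamma\ge\int(a+b)\log(a+b)+\frac{s^2}{1-s^2}\int(a+b)\big(1+\log(a+b)\big)-\frac{2s}{1-s^2}\int g_+g_-\big(1+\log(a+b)\big).
\]
Since $\int(a+b)\log(a+b)\ge 2\int g^2\log g^2$ and $a+b\le 2\pi^{-d/2}$, the last two terms are $\ge -Cs^2$.

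The mixing gain, by contrast, is linear in $s$. On $\{x_1<0\}$ one has $b\ge a$, so
\[
\int a\log(1+b/a)\ge\log 2\int_{\{x_1<0\}}g_+^2\,dx\sim\frac{\log 2}{2\sqrt\pi}\,\frac{e^{-R^2}}{R}.
\]
Hence $\mathcal E(\gamma)\le 2e_1^*+O(R^2e^{-2R^2})-c\,e^{-R^2}/R<2e_1^*$ for large $R$. No comparison of constants and no further use of the Euler--Lagrange equation is needed. This is the structure of the paper's argument, which places the centers at distance $R$: cost $O(R^2e^{-R^2/2})$ against gain $\gtrsim Re^{-(R/2+3)^2}$. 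Your proposal instead asserts that the two effects are comparable and stops, so the inequality is not proved.

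\textbf{Positivity of $\liminf e_2(p)$.} Your first route (Lieb--Thirring plus interpolation, optimized in $K$) is the paper's route. It gives $\liminf_{p\searrow1}e_2(p)\ge d\big(1-\log\frac{d}{2c_{LT}(d)}\big)$, and what remains is to check $c_{LT}(d)>\frac{d}{2e}$ for every $d$. The paper does this using the explicit bound $c_{LT}(d)\ge\pi^{1-2/d}\frac{4d}{d+2}(d\Gamma(d/2))^{2/d}$, plus a monotonicity argument for $\frac1t\log\Gamma(t)-\log(2t+2)$ when $d>8$. You drop the route before this check. Appealing to ``the sharp Lieb--Thirring bound'' is not a plan, since that constant is not known.

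Your fallback bounds the wrong functional. The scalar log-Sobolev inequality applied to $\sqrt{\rho/2}$ bounds $\mathcal E$ from below, giving $e_2^*\ge 2e_1^*-2\log 2$. But $\frac{\rho^p-\rho}{p-1}\ge\rho\log\rho$ gives $\mathcal E_p(\gamma)\le\mathrm{Tr}(-\Delta\gamma)-\frac1p\int\rho_\gamma\log\rho_\gamma$, so a lower bound on $\mathcal E$ does not transfer to $e_2(p)$.

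The idea does work at the power-law level, and there it is simpler than the paper's route. By Hoffmann-Ostenhof,
\[
e_2(p)\ge\inf\Big\{\|\nabla u\|_2^2-\frac1{p(p-1)}\int(|u|^{2p}-|u|^2):\ \|u\|_2^2=2\Big\}.
\]
Writing $u=\sqrt2\,\phi$ and rescaling twice, this infimum equals $2\big[\kappa_pe_1(p)+\frac{1-\kappa_p}{p(p-1)}\big]$ with $\kappa_p:=2^{\frac{2(p-1)}{2-d(p-1)}}$. As $p\searrow1$ it tends to $2e_1^*-2\log2=2d+d\log\pi-2\log 2>0$. This uses only $e_1(p)\to e_1^*$ and requires no information on $c_{LT}(d)$.
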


\noindent \textbf{Proof.}  We first prove that
\begin{align}\label{2.20}
	\liminf\limits_{p\searrow1}e_2(p)>0.
\end{align}
It follows from \cite[Equation (34)]{i} that
\begin{align}\label{2.5b}
E_2(p)\geq-\frac{2-d(p-1)}{p}\Big(\frac{d(p-1)}{2pc_{LT}(d)}\Big)^{\frac{d(p-1)}{2-d(p-1)}},
\end{align}
where
\begin{align}\label{lt}
0<c_{LT}(d):=\inf\Big\{\frac{\|\gamma\|^{2/d}\text{Tr}(-\Delta\gamma)}{\inte\rho_\gamma^{1+2/d}dx}:\  0\leq \gamma=\gamma^*\Big\}.
\end{align}
We then calculate from \eqref{2.4} and \eqref{2.5b} that
\begin{align}\label{2.6}
e_2(p)=&
(p-1)^{\frac{-2}{2-d(p-1)}}\Big[E_2(p)+\frac{2}{p}(p-1)^{\frac{d(p-1)}{2-d(p-1)}}\Big]\nonumber\\
\geq&(p-1)^{\frac{-2}{2-d(p-1)}}\Big[-2\frac{2-d(p-1)}{2p}\Big(\frac{d(p-1)}{2pc_{LT}(d)}\Big)^{\frac{d(p-1)}{2-d(p-1)}}+\frac{2}{p}(p-1)^{\frac{d(p-1)}{2-d(p-1)}}\Big]\nonumber\\
=&\frac{2}{p(p-1)}\Big[1-\frac{2-d(p-1)}{2}\Big(\frac{d}{2pc_{LT}(d)}\Big)^{\frac{d(p-1)}{2-d(p-1)}}\Big]\\
=&\frac{2}{p(p-1)}\bigg\{1-\Big[1-\frac{d(p-1)}{2}\Big]\Big[1+\frac{d(p-1)}{2-d(p-1)}\text{log}\frac{d}{2pc_{LT}(d)}+o(p-1)\Big]\bigg\}\nonumber\\
=&d\Big(1-\text{log}\frac{d}{2c_{LT}(d)}\Big)+o(1)\ \ \ \text{as}\ \ p\searrow1.\nonumber
\end{align}
Note from \cite{clt, open} that $c_{LT}(d)\geq \pi^{1-\frac{2}{d}}\frac{4d}{d+2}\big(d\Gamma(d/2)\big)^{2/d}
$,  
where $\Gamma(\cdot)$ denotes   $Gamma$ function.
By direct calculations, we hence conclude from  \eqref{2.6} that
\begin{align}\label{2.8c}
\liminf\limits_{p\searrow1}e_2(p)>0,\ \   \forall\ 1\leq d\leq8.
\end{align}

As for the case  $d>8$,  we claim that
\begin{align}\label{2.8b}
c_{LT}(d)/d\geq\frac{2\big(\Gamma(4)\big)^{1/4}}{5\pi},\ \ \ \forall\ d>8.
\end{align}
Actually, set $$f(t):=\frac{1}{t}\text{log}\Gamma(t)-\text{log}(2t+2),$$
and  note that
\begin{equation}\label{2.0}
\begin{split}\frac{c_{LT}(d)}{d}
\geq4\pi^{1-\frac{2}{d}}\
\frac{\big(d\Gamma(d/2)\big)^{2/d}}{d+2}
\geq4\pi^{-1}\
\frac{\big(\Gamma(d/2)\big)^{2/d}}{d+2}=4\pi^{-1}e^{f(d/2)},\ \ \ \forall\ d\geq1.
\end{split}\end{equation}
Since $$\big(\text{log}\Gamma(t)\big)''=\int_0^\infty\frac{xe^{-tx}}{1-e^{-x}}dx\ \  \mbox{in}\,\ \R^+,$$
and
\begin{equation}\label{a.1}
\begin{split}
f'(t)=\frac{1}{t^2}\Big[t\big(\text{log}\Gamma(t)\big)'-\text{log}\Gamma(t)-\frac{t^2}{t+1}\Big]:=\frac{h(t)}{t^2}\ \  \text{in}\ \, \R^+,
\end{split}
\end{equation}
we deduce that
\begin{equation}\label{a.2}
\begin{split}
h'(t)=&t\big(\text{log}\Gamma(t)\big)''-\frac{t^2+2t}{(1+t)^2}\geq\int_0^\infty\frac{txe^{-tx}}{1-e^{-x}}dx-1\\
=&\frac{1}{t}\int_0^\infty\frac{ye^{-y}}{1-e^{-y/t}}dy-1\geq\int_0^\infty e^{-y}dy-1=0\ \   \text{in}\ \, \R^+.
\end{split}
\end{equation}
One can   check  that $h(4)>0$, which then implies from \eqref{a.1} and \eqref{a.2} that $f'(t)>0$ holds for any $t\geq4$. 
We thus obtain from \eqref{2.0} that the claim \eqref{2.8b} holds true.  The claim \eqref{2.20} therefore follows from  \eqref{2.6}--\eqref{2.8b}.

Moreover, since
\begin{align}\label{2.8a}
\frac{t^{p-1}-1}{p-1}=\frac{e^{(p-1)\text{log}t}-1}{(p-1)\text{log}t}\text{log}t\geq\text{log}t,\ \   \forall\ t>0,
\end{align}
 we have
\begin{align*}
\limsup\limits_{p\searrow1}e_2(p)\leq\limsup\limits_{p\searrow1}\mathcal{E}_p(\gamma)
=&\limsup\limits_{p\searrow1}\Big[\text{Tr}(-\Delta\gamma)-\frac{1}{p(p-1)}\inte\big(\rho_\gamma^p-\rho_\gamma\big)dx\Big]\\
\leq&\limsup\limits_{p\searrow1}\Big[\text{Tr}(-\Delta\gamma)-\frac{1}{p}\inte \rho_{\gamma}\text{log}\rho_{\gamma}dx\Big]\\
=&\text{Tr}(-\Delta\gamma)-\inte \rho_{ \gamma}\text{log}\rho_{\gamma}dx,\ \   \forall\ \gamma\in \mathcal{P}_2.
\end{align*}
This further implies that $\limsup\limits_{p\searrow1}e_2(p)\leq e_2^*$.
Note from \cite{normal,lognls} that $$\lim\limits_{p\searrow1}e_1(p)=e_1^*=d+\frac{d}{2}\text{log}\pi,$$
and  the problem $e_1^*$ given by \eqref{2.4b} admits, up to translations, a unique positive minimizer $w(x)=\pi^{-\frac{d}{4}}e^{-\frac{|x|^2}{2}}$. Thus, it remains to show that  $e_2^*<2e_1^*$.

Denote
\begin{equation}\label{2.15b}
\begin{split}
G_R=:
\left(
\begin{split}
&w\\[-2mm]
&w_R
\end{split}
\right)
\big(w, w_R\big)
=\begin{bmatrix}
1& a_{R} \\
a_{R}& 1
\end{bmatrix}
,\ \   a_{R}:=\langle w,\,  w_{R}\rangle,
\end{split}
\end{equation}
where $w_R(x):=w(x-Re_1)$, $R>0$ and $e_1:=(1,0,\cdots,0)$. One can check that
the matrix $G_R$ is positive definite for sufficiently large $R>0$. Hence, the components of the vector
\begin{equation}\label{2.15a}
(u_{1R},\,  u_{2R}):=(w, \, w_{R})G_R^{-\frac{1}{2}}
\end{equation}
is orthonormal in $L^2(\R^d)$  for sufficiently large $R>0$. Define for sufficiently large $R>0$,
\begin{equation}\label{2.15c}
 \gamma_R:=|u_{1R}\rangle\langle u_{1R}|+|u_{2R}\rangle\langle u_{2R}|\in\mathcal{P}_2.
\end{equation}
Similar to \cite{i}, it then gives that
\begin{equation}\label{8}
\begin{split}
\gamma_R
=&|w\rangle\langle w|+|w_{R}\rangle\langle w_{R}|-a_R|w\rangle\langle w_{R}|-a_R|w_{R}\rangle\langle w|\\
&+O(a_R^2)\Big[|w\rangle\langle w|+|w_{R}\rangle\langle w_{R}|+|w\rangle\langle w_{R}|+|w_{R}\rangle\langle w|\Big]
\ \ \ \text{as}\ \ R\to\infty,
\end{split}
\end{equation}
where $g(R)=O(a_R^2)$ means that there exists a constant $C > 0$, independent of $R > 0$, such that $|g(R)| \leq Ca_R^2$  holds for any sufficiently large $R > 0$.

Since $w(x)=\pi^{-\frac{d}{4}}e^{-\frac{|x|^2}{2}}$,  direct calculations yield that
\begin{equation}\label{2.24a}
\begin{split}
a_R=\inte ww_Rdx=e^{-\frac{R^2}{4}},\ \   \inte ww_R\text{log}w^2dx=-e^{-\frac{R^2}{4}}\Big[\frac{R^2}{4}+\frac{d}{2}(1+\text{log}\pi)\Big],
\end{split}
\end{equation}
and thus
\begin{equation}\label{2.25b}
\begin{split}
\mathrm{Tr}(-\Delta\gamma_R)
=&2\inte |\nabla w|^2dx
-2a_{R}\inte \nabla w\cdotp\nabla w_{R}dx+O(a_R^2)\\
=&2\inte |\nabla w|^2dx
-2a_{R}\Big[e_1^*a_R+\inte ww_R\text{log}w^2dx\Big]
+O(a_R^2)\\
=&2\inte |\nabla w|^2dx+\frac{R^2}{2}e^{-\frac{R^2}{2}}+O(e^{-\frac{R^2}{2}})\ \   \mathrm{as}\ \, R\to\infty,
\end{split}
\end{equation}
where the second identity follows from the fact that
\begin{equation*}
-\Delta w-w\text{log}w^2=e_1^* w\ \  \, \text{in}\ \, \R^d.
\end{equation*}
Moreover, since  $\text{log}(1+t)\leq t$ holds for any $t>0$, one can calculate from \eqref{2.24a} that
\begin{equation*}
\begin{split}
&\inte ww_R\text{log}(w^2+w_R^2)dx\\
=&\inte ww_R\text{log}w^2dx+\inte ww_R\text{log}(1+w_R^2w^{-2})dx\\
=&\inte ww_R\text{log}w^2dx+\inte ww_R\text{log}(1+e^{|x|^2-|x-Re_1|^2})dx\\
\leq&\inte ww_R\text{log}w^2dx+\inte |x|^2ww_Rdx+(2\text{log}2)\inte ww_Rdx\\
=&O\big(e^{-\frac{R^2}{4}}\big)\ \   \text{as}\ \, R\to\infty,
\end{split}
\end{equation*}
and
\begin{align*}
\infty>&2\inte w^2\text{log}w^2dx+2\inte w^2dx\nonumber\\
\geq&\inte(w^2+w_R^2)\text{log}(w^2+w_R^2)dx\nonumber\\
=&2\inte w^2\text{log}w^2dx+2\inte w^2\text{log}\big(1+w_R^2w^{-2}\big)dx\\
\geq &2\inte w^2\text{log}w^2dx+2\pi^{-\frac{d}{2}}\int_{B_1\big(\frac{Re_1}{2}+2e_1\big)} e^{-(\frac{R}{2}+3)^2}\text{log}\big(1+e^{2R}\big)dx\nonumber\\
\geq&2\inte w^2\text{log}w^2dx+ 4\pi^{-\frac{d}{2}}|B_1|Re^{-(\frac{R}{2}+3)^2},\ \   \forall\,\ R>10.\nonumber
\end{align*}
Since  it follows from \eqref{2.15b}--\eqref{2.15c} that for any sufficiently large $R > 0$,
\[
\begin{aligned}
\rho_{\gamma_R}=u_{1R}^2+u_{2R}^2=(w,w_R)G_R^{-1}
\begin{pmatrix}w\\w_R
\end{pmatrix}
=\frac{w^2+w_R^2-2a_Rww_R}{1-a_R^2},
\end{aligned}
\]
applying  the convexity of $t\log t$  on $(0,\infty)$, we obtain   that
\begin{align}\label{2.27a}
	&\inte\rho_{\gamma_R}\log\rho_{\gamma_R}\,dx\nonumber\\
	\geq&
	\inte(w^2+w_R^2)\log(w^2+w_R^2)\,dx+\inte\big[\rho_{\gamma_R}-(w^2+w_R^2)\big]
	\big[1+\log(w^2+w_R^2)\big]\,dx\nonumber\\
	=&
	\inte(w^2+w_R^2)\log(w^2+w_R^2)\,dx\\
	&+\frac{a_R^2}{1-a_R^2}
	\inte(w^2+w_R^2)\log(w^2+w_R^2)\,dx-\frac{2a_R}{1-a_R^2}
	\inte ww_R\log(w^2+w_R^2)\,dx\nonumber\\
	\geq{}&
	\inte(w^2+w_R^2)\log(w^2+w_R^2)\,dx-O(a_R^2)\nonumber\\
	\geq{}&
	2\inte w^2\log w^2\,dx
	+4\pi^{-\frac d2}|B_1|R e^{-(\frac R2+3)^2}
	-O\big(e^{-\frac{R^2}{2}}\big)\ \ \text{as}\ \ R\to\infty.\nonumber
\end{align}
As a consequence of \eqref{2.25b} and \eqref{2.27a},  we conclude that
\begin{align*}\label{substri}
e_2^*\leq& \mathrm{Tr}(-\Delta\gamma_R)-\inte\rho_{\gamma_R}\text{log}\rho_{\gamma_R}dx\nonumber\\
\leq&2e_1^*+R^2e^{-\frac{R^2}{2}}- 4\pi^{-\frac{d}{2}}|B_1|Re^{-(\frac{R}{2}+3)^2}\\[1.5mm]
<&2e_1^*,\ \   \text{if}\ R>0 \  \text{is sufficiently  large},\nonumber
\end{align*}
which  therefore completes the proof of Lemma \ref{lemstri}.  \qed

\vspace{.15cm}

Let $E_2(p)$ be given by \eqref{ep}, and suppose   $\tilde{\gamma}_p=\sum_{i=1}^2|v_{ip}\rangle\langle v_{ip}|\in\mathcal{P}_2$ is a minimizer of $E_2(p)$  satisfying \eqref{eqv}.
Define
\begin{equation}\label{min}
\gamma_p:=\sum_{i=1}^2|u_{ip}\rangle\langle u_{ip}|,\ \   u_{ip}(x):=\alpha_p^{-\frac{d}{2}}v_{ip}(\alpha_p^{-1}x),\ \    \alpha_p:=(p-1)^{\frac{1}{2-d(p-1)}}.
\end{equation}
Applying \eqref{eqv}, it then follows from  \eqref{2.4a} and \eqref{2.4} that $\gamma_p$
is a minimizer of $e_2(p)$ given by \eqref{problem},  and  $u_{ip}$ satisfies
\begin{align}\label{equ}
-\Delta u_{ip}=&\frac{\rho_{\gamma_p}^{p-1}-1}{p-1}u_{ip}
+\frac{1+\mu_{ip}\alpha_p^{-d(p-1)}}{p-1}u_{ip}\ \ \ \text{in}\ \, \R^d,\ \,   i=1, 2.
\end{align}
We shall prove that up to a subsequence and  a translation,  the limit operator $\gamma$ of the above  sequence $\{\gamma_p\}$ as  $p\searrow1$ is essentially a minimizer of the problem  $e_2^*$. Towards this aim, we first establish the following uniform estimates of $\{\gamma_p\}$ and  $\{(\mu_{1p}, \mu_{2p})\}$.

\begin{lem}\label{lem2.1}
Let  $\gamma_p$ and  $\mu_{1p}<\mu_{2p}<0$ be given by \eqref{min} and \eqref{equ}, respectively. Then we have
\begin{align}\label{2.15}
\lim\limits_{p\searrow1}\Tr(-\Delta\gamma_p)=d,
\ \ \,
\sup\limits_{p\searrow1}\sum_{i=1}^2\frac{\big|1+\mu_{ip}\alpha_p^{-d(p-1)}\big|}{p-1}<\infty, 
\end{align}
where $\alpha_p=(p-1)^{\frac{1}{2-d(p-1)}}>0$ is as in \eqref{min}.
\end{lem}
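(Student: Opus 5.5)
The plan is to get the kinetic energy limit from a virial (scaling) identity combined with Lemma \ref{lemstri}, and then to bound the Lagrange multipliers by testing \eqref{equ} against $u_{ip}$.

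\textbf{Kinetic energy.} Since $\gamma_p$ minimizes $e_2(p)$, consider the $L^2$-preserving dilations $\gamma_p^\lambda(x,y):=\lambda^d\gamma_p(\lambda x,\lambda y)\in\mathcal P_2$. Write $T_p:=\Tr(-\Delta\gamma_p)$. Then
$$\mathcal E_p(\gamma_p^\lambda)=\lambda^2T_p-\frac{1}{p(p-1)}\Big(\lambda^{d(p-1)}\inte\rho_{\gamma_p}^pdx-2\Big).$$
Setting $\frac{d}{d\lambda}\big|_{\lambda=1}=0$ gives the virial identity $T_p=\frac{d}{2p}\inte\rho_{\gamma_p}^p\,dx$. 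Substituting this into $e_2(p)=\mathcal E_p(\gamma_p)$ yields
$$T_p\Big(\frac{2}{d(p-1)}-1\Big)=\frac{2}{p(p-1)}-e_2(p),\qquad\text{i.e.}\qquad T_p=\frac{d(p-1)}{2-d(p-1)}\Big[\frac{2}{p(p-1)}-e_2(p)\Big].$$
By Lemma \ref{lemstri}, $e_2(p)$ stays bounded as $p\searrow1$. Hence $T_p\to d$, which is the first claim in \eqref{2.15}.

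\textbf{Sum of the multipliers.} Set $c_{ip}:=\frac{1+\mu_{ip}\alpha_p^{-d(p-1)}}{p-1}$, and note $c_{1p}<c_{2p}$ because $\mu_{1p}<\mu_{2p}$. Multiply \eqref{equ} by $\bar u_{ip}$ and integrate, writing $T_{ip}:=\inte|\nabla u_{ip}|^2dx$:
$$T_{ip}=\inte\frac{\rho_{\gamma_p}^{p-1}-1}{p-1}|u_{ip}|^2dx+c_{ip}.$$
Summing over $i=1,2$ gives
$$c_{1p}+c_{2p}=T_p-\frac{\inte\rho_{\gamma_p}^pdx-2}{p-1}=T_p-p\big(T_p-e_2(p)\big),$$
where the last equality uses the virial identity again. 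So $c_{1p}+c_{2p}$ is bounded as $p\searrow1$.

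\textbf{Individual bounds.} It suffices to bound each $c_{ip}$ from below. Indeed, then $c_{2p}=(c_{1p}+c_{2p})-c_{1p}$ is bounded above, and $c_{1p}\le c_{2p}$ gives the upper bound for $c_{1p}$ as well.

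For the lower bound, I need a uniform upper bound on $\inte\frac{\rho^{p-1}-1}{p-1}|u_{ip}|^2dx$, after which $c_{ip}\ge T_{ip}-C\ge -C$. This is the step I expect to be the main (though mild) obstacle. I would use the following facts.
\begin{itemize}
\item The map $q\mapsto(t^q-1)/q$ is nondecreasing in $q>0$ for each fixed $t>0$.
\item Fix a small $\delta\in(0,2/d)$. For $p-1\le\delta$ this gives $(t^{p-1}-1)/(p-1)\le(t^\delta-1)/\delta\le t^\delta/\delta$.
\item Since $|u_{ip}|^2\le\rho_{\gamma_p}$, the integral is therefore at most $\delta^{-1}\inte\rho_{\gamma_p}^{1+\delta}dx$.
\item By interpolation between $\|\rho_{\gamma_p}\|_1=2$ and $\|\rho_{\gamma_p}\|_{1+2/d}$, together with the Lieb--Thirring inequality \eqref{lt} and the bound $T_p\le C$, this is uniformly bounded.
\end{itemize}
This proves the second claim in \eqref{2.15}.
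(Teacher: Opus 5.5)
Your proof is correct. For the kinetic energy it is essentially the paper's argument (virial identity plus boundedness of the energy). The only difference is that you read off the boundedness of $e_2(p)$ from Lemma \ref{lemstri}, whereas the paper re-derives $\lim E_2(p)=-2$ from the $\sigma_p$-asymptotics \eqref{2.19} and the Lieb--Thirring lower bound.

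For the multipliers the two routes are mirror images of each other.
\begin{itemize}
\item The paper gets a \emph{one-sided} bound on the sum, $c_{1p}+c_{2p}\ge -C$, from the $L^{1+2/d}$ control of the positive part of the nonlinearity \eqref{2.11}. It gets an \emph{upper} bound on each $c_{ip}$ from the orbital-removal comparison $\mu_{2p}\le E_2(p)-E_1(p)\le E_1(p)$, combined with the explicit formula \eqref{2.20a} for $E_1(p)$ and the asymptotics of $\sigma_p$ taken from \cite{wzq}.
\item You observe that summing the tested equations gives the exact identity $c_{1p}+c_{2p}=(1-p)T_p+p\,e_2(p)$. This is a two-sided bound on the sum for free; it even shows $c_{1p}+c_{2p}-e_2(p)\to 0$. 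You then apply the same $L^{1+\delta}$ argument to each $c_{ip}$ separately to obtain individual lower bounds.
\end{itemize}

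Your route is more elementary and self-contained: it needs neither the comparison with the one-body problem nor any information on the scalar ground states. What the paper's route provides in addition is an explicit bound $\limsup c_{ip}\le 2\mu_*+1-\frac d2\log 2$ and the structural fact $\mu_{2p}\le E_1(p)$, neither of which the lemma requires.

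A minor point: the step $\frac{1}{p-1}\big(\inte\rho_{\gamma_p}^p\,dx-2\big)=p\,(T_p-e_2(p))$ uses only the definition $e_2(p)=\mathcal E_p(\gamma_p)$, not the virial identity.
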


\noindent \textbf{Proof.}
We first prove the claim that
\begin{align}\label{2.18}
\lim\limits_{p\searrow1}\text{Tr}(-\Delta\gamma_p)=d.
\end{align}
Let $v_p>0$ be a minimizer of $E_1(p)$, and suppose   $\varphi_p>0$ is the unique (up to translations) positive  minimizer of the following  variational problem
$$
0<\sigma_p:=\inf\limits_{\|\varphi\|^{2p}_{2p}=1}\inte\big(|\nabla \varphi|^2+\varphi^2\big)dx.
$$
It then follows from \cite[Equation (44)]{i} and \cite[Theorem 1.2]{wzq} that
\begin{equation}\label{2.3}
-\Delta v_p-|v_p|^{2p-2}v_p=\mu_pv_p\ \ \, \text{in}\ \, \R^d, \ \ \,  \mu_p=E_1(p)\frac{2p-d(p-1)}{2-d(p-1)}<0,
\end{equation}
\begin{equation}\label{2.3c}
-\Delta\varphi_p+\varphi_p-\sigma_p|\varphi_p|^{2p-2}\varphi_p=0\ \   \text{in}\ \, \R^d,
\end{equation}
and
\begin{equation}\label{2.19}
\lim\limits_{p\searrow1}\Big[(2p-2)^{\frac{d(p-1)}{2p}-1}\sigma_p-(2p-2)^{-1}\Big]=\frac{d}{2}\Big(1+\frac{\text{log}(2\pi)}{2}\Big)\ \ \ \text{as}\ \ p\searrow1.
\end{equation}
By the uniqueness of  positive solutions for \eqref{2.3c},  one can  verify  from \eqref{2.3} that 
\begin{equation}\label{2.20a}
 E_1(p)=
 -\sigma_p^{-\frac{2p}{2-d(p-1)}}\frac{2-d(p-1)}{2p-d(p-1)}
 \left(\frac{2p}{2p-d(p-1)}\right)^{\frac{2(p-1)}{2-d(p-1)}},
\end{equation}
which further yields   from  \cite[Lemma 12]{i}  that
\begin{equation}\label{2.3a}
	E_2(p)\leq 2E_1(p)=-2\sigma_p^{-\frac{2p}{2-d(p-1)}}\frac{2-d(p-1)}{2p-d(p-1)}
	\left(\frac{2p}{2p-d(p-1)}\right)^{\frac{2(p-1)}{2-d(p-1)}}.
\end{equation}
We hence conclude from \eqref{2.19} and \eqref{2.3a}  that
\begin{equation}\label{2.2a}
\limsup\limits_{p\searrow1}E_2(p)\leq-2.
\end{equation}

On the other hand, recall from \cite{i} that
\begin{equation*}
\begin{split}
&\frac{d(p-1)-2}{d(p-1)}\text{Tr}(-\Delta\tilde{\gamma}_p)=\frac{d(p-1)-2}{2p}\inte\rho_{\tilde{\gamma}_p}^pdx\\
=&E_2(p)\geq-2\frac{2-d(p-1)}{2p}\Big(\frac{d(p-1)}{2pc_{LT}(d)}\Big)^{\frac{d(p-1)}{2-d(p-1)}},
\end{split}\end{equation*}
where   $\tilde{\gamma}_p:=\sum_{i=1}^2|v_{ip}\rangle\langle v_{ip}|$, and $v_{ip}$ and the constant $c_{LT}(d)>0$ are as in \eqref{min} and  \eqref{lt}, respectively.
We then obtain from \eqref{2.2a} that
\begin{equation*}
\lim\limits_{p\searrow1}E_2(p)=-2,\ \ \, \lim\limits_{p\searrow1}(p-1)^{-1}\text{Tr}(-\Delta\tilde{\gamma}_p)=d,\ \ \, \lim\limits_{p\searrow1}\inte\rho_{\tilde{\gamma}_p}^pdx= 2,
\end{equation*}
and thus
\begin{align*}
\lim\limits_{p\searrow1}\text{Tr}(-\Delta\gamma_p)=\lim\limits_{p\searrow1}(p-1)^{-1}\text{Tr}(-\Delta\tilde{\gamma}_p)=d.
\end{align*}
This   proves the claim \eqref{2.18}.

We next note from \cite[Lemma 2.1 (i)]{wzq} that for any $q>1$, there exists a constant $C(q)>0$, depending only on $q$, such that
\begin{align}\label{2.28a}
\frac{t^{p-1}-1}{p-1}\leq C(q)t^{q-1},\ \   \forall\, t\geq0,\,\ p\in(1, q).
\end{align}
Applying \eqref{lt}, we then get  from  \eqref{equ} and  \eqref{2.18} that
\begin{align}\label{2.11}
-\frac{2+\sum_{i=1}^2\mu_{ip}\alpha_p^{-d(p-1)}}{p-1}
\leq&\int_{\rho_{\gamma_p}(x)<1}\frac{1-\rho_{\gamma_p}^{p-1}}{p-1}\rho_{\gamma_p}dx-\frac{2+\sum_{i=1}^2\mu_{ip}\alpha_p^{-d(p-1)}}{p-1}\nonumber\\
=&-\text{Tr}(-\Delta\gamma_p)+\int_{\rho_{\gamma_p}(x)\geq1}\frac{\rho_{\gamma_p}^{p-1}-1}{p-1}\rho_{\gamma_p}dx\\
\leq &C\int_{\rho_{\gamma_p}(x)\geq1}\rho_{\gamma_p}^{1+\frac{2}{d}}dx\leq Cc^{-1}_{LT}(d) \sup\limits_{p\searrow1}\text{Tr}(-\Delta\gamma_p)
<\infty.\nonumber
\end{align}
Denote $\tilde{\gamma}'_p:=\tilde{\gamma}_p-|v_{2p}\rangle\langle v_{2p}|\in\mathcal{P}_1$. Applying the convexity of $t\mapsto t^p$, it can be verified  from \eqref{eqv}  that
\begin{align*}
E_1(p)\leq&\text{Tr}(-\Delta\tilde{\gamma}'_p)-\frac{1}{p}\inte\rho_{\tilde{\gamma}'_p}^pdx\nonumber\\
=&\text{Tr}(-\Delta\tilde{\gamma}_p)-\frac{1}{p}\inte\rho_{\tilde{\gamma}_p}^pdx-\mu_{2p}-\frac{1}{p}\inte \Big[
\rho_{\tilde{\gamma}'_p}^{p}
-\rho_{\tilde{\gamma}_p}^{p}
-p\rho_{\tilde{\gamma}_p}^{p-1}
\big(
\rho_{\tilde{\gamma}'_p}
-\rho_{\tilde{\gamma}_p}
\big)
\Big]dx\\
\leq&\text{Tr}(-\Delta\tilde{\gamma}_p)-\frac{1}{p}\inte\rho_{\tilde{\gamma}_p}^pdx-\mu_{2p}=E_2(p)-\mu_{2p},\nonumber
\end{align*}
which then yields that
\begin{align}\label{M:2.28a}\mu_{2p}\leq E_2(p)-E_1(p)\leq E_1(p).\end{align}

Set $\mu_*:=\frac{d}{2}(1+\frac{\text{log}(2\pi)}{2})+1>0$.
Applying (\ref{M:2.28a}), we then calculate from \eqref{2.19} and \eqref{2.20a} that
\begin{align*}
	&\mu_{1p}\alpha_p^{-d(p-1)}
	<\mu_{2p}\alpha_p^{-d(p-1)}
	\leq
	E_1(p)\alpha_p^{-d(p-1)}
	\\
	={}&
	-\frac{2-d(p-1)}{2p-d(p-1)}
	\left(\frac{2p}{2p-d(p-1)}\right)^{
		\frac{2(p-1)}{2-d(p-1)}}
	\sigma_p^{-\frac{2p}{2-d(p-1)}}
	(p-1)^{-\frac{d(p-1)}{2-d(p-1)}}
	\\
	\leq{}&
	-\frac{2-d(p-1)}{2p-d(p-1)}
	\left(\frac{2p}{2p-d(p-1)}\right)^{
		\frac{2(p-1)}{2-d(p-1)}}
	\bigl(1+2\mu_*(p-1)\bigr)^{
		-\frac{2p}{2-d(p-1)}}
	2^{\frac{d(p-1)}{2-d(p-1)}}
	\\
	=&
	-\left[1-\frac{2(p-1)}{2p-d(p-1)}\right]\, 
	\left[
	1-\frac{4p\mu_*(p-1)}{2-d(p-1)}
	\right]
	\left[
	1+\frac{d(p-1)\log2}{2-d(p-1)}
	\right]+o(p-1)
	\\
	=&
	-1+\left(2\mu_*+1-\frac d2\log2\right)(p-1)
	+o(p-1)\ \ \text{as}\ \ p\searrow1,
\end{align*}
which further yields that
\begin{align*}
\limsup\limits_{p\searrow1}\frac{1+\mu_{ip}\alpha_p^{-d(p-1)}}{p-1}
\leq-\frac{d\text{log}2}{2}+2\mu_*+1,\ \   i=1, 2.
\end{align*}
Thus, we immediately obtain from \eqref{2.11} that
\begin{align*}
\sup\limits_{p\searrow1}\frac{\big|1+\mu_{ip}\alpha_p^{-d(p-1)}\big|}{p-1}<\infty,\ \ \ i=1, 2.
\end{align*}
This therefore completes the proof of  Lemma \ref{lem2.1}.\qed

\section{Convergence of $E_2(p)$ as $p\searrow1$}\label{section3}

In this section, we first analyze the convergence of minimizers for $E_2(p)$ as $p\searrow1$, based on which we shall complete in Subsection 3.1  the proof of Theorem \ref{thm1}.

For simplicity, we define
\begin{equation}\label{f}
H(N):=\sum_{i=1}^N(-\Delta_{x_i}),\ \ \ F(\rho):=-\frac{1}{p(p-1)}\inte (\rho^p-\rho)dx.
\end{equation}
We also denote $H^1_a(\R^{dN},\C)$ as the subspace of $H^1(\R^{dN},\C)$, which consists of all  antisymmetric wave functions. Since the functional $F(\rho)$ is concave with respect to $\rho$ on the set $\{\rho\geq0\}$, the same argument of  \cite[Equation (1.7)]{cg1} gives that
\begin{equation}\label{eq}
e_N(p)=\inf\Big\{\big\langle \Psi,\, H(N)\Psi\big\rangle+F(\rho_\Psi):\, \Psi\in H^1_a(\R^{dN}, \C),\ \|\Psi\|_2^2=1\Big\},\ \ N=1,2,
\end{equation}
where the problem $e_N(p)$ is given by  \eqref{problem}, and
\begin{equation*}
\begin{split}
\rho_\Psi(x):=&N\int_{\R^{d(N-1)}}|\Psi(x,x_2,\cdots,x_N)|^2dx_2\cdots dx_N.
\end{split}
\end{equation*}
Applying Lemmas \ref{lemstri} and \ref{lem2.1}, we first establish the following $H^1$-uniform convergence.  

\begin{lem}\label{prop}
Suppose $\gamma_{p_n}=\sum_{i=1}^2|u_{ip_n}\rangle\langle u_{ip_n}|\in\mathcal{P}_2$  given by \eqref{min}  is a minimizer of $e_2(p_n)$, where $p_n\searrow1$ as $n\to\infty $. 
Then up to a subsequence and a translation if necessary,  we have $\lim\limits_{n\to\infty}e_2(p_n)=e_2^*$, and
\begin{equation}\label{2.5a}
u_{ip_n}\to u_i\ \  \text{strongly in}\ H^1(\R^d,\C)\ \ \text{as}\ \ n\to\infty,\ \   i=1,2,
\end{equation}
where $\gamma:=\sum_{i=1}^2|u_i\rangle\langle u_i|$ is a minimizer of the problem $e_2^*$ defined in \eqref{e}.
\end{lem}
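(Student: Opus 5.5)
We argue by concentration-compactness for the sequence $\{\gamma_{p_n}\}$, using the uniform bounds of Lemma \ref{lem2.1} and the strict binding inequality $e_2^*<2e_1^*$ of Lemma \ref{lemstri}. By Lemma \ref{lem2.1}, $\Tr(-\Delta\gamma_{p_n})\to d$, so $\{u_{ip_n}\}$ is bounded in $H^1$.

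\textbf{Step 1: ruling out vanishing.} If vanishing occurred, then $\sup_y\int_{B_1(y)}\rho_{\gamma_{p_n}}\to0$, and Lions' lemma gives $\rho_{\gamma_{p_n}}\to0$ in $L^{q}$ for $1<q<1+2/d$. We combine this with \eqref{2.28a}. On $\{\rho\ge1\}$, the renormalized nonlinearity $\frac{\rho^{p}-\rho}{p(p-1)}$ is bounded by $C\rho^{q}$, so its integral tends to zero. On $\{\rho<1\}$, the same integrand is nonpositive. Hence $\liminf e_2(p_n)\ge \liminf \Tr(-\Delta\gamma_{p_n})=d$, which is not yet a contradiction. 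The key is to use the equation \eqref{equ} with multipliers bounded as in Lemma \ref{lem2.1}.

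I would instead pass through the logarithm. By \eqref{2.8a}, $\mathcal E_{p}(\gamma)\ge \Tr(-\Delta\gamma)-\frac1p\int\rho_\gamma\log\rho_\gamma$. We then apply the logarithmic Sobolev inequality \eqref{logsobo1} to each orbital, together with concavity, or rather the Lieb--Thirring bound, to control the positive part $\int\rho\log_+\rho$ by $\int\rho^{1+2/d}$. Under vanishing this positive part tends to zero, while the negative part only increases the energy. We therefore get $\liminf e_2(p_n)\ge d$.

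This must be compared with the upper bound. I expect the real contradiction to come from dichotomy-type reasoning: vanishing forces every localized piece to carry the energy of a free system, and we use $e_2^*<2e_1^*$ together with $e_1^*=d+\frac d2\log\pi$. Since $\log\pi>0$, the bound $d$ alone does not contradict $e_2^*<2e_1^*$. So I would instead test the Euler--Lagrange equation against $u_{ip_n}$. Summing over $i$ and using the bounded multipliers, we get $\Tr(-\Delta\gamma_{p_n})=\int\frac{\rho^{p-1}-1}{p-1}\rho+O(1)$. A more precise use of the Pohozaev-type identity from \cite{i}, namely $\Tr(-\Delta\tilde\gamma_p)\propto\int\rho^p$, shows that the kinetic energy is tied to $\int\rho\log\rho$. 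Vanishing would make the right side $\le o(1)$ plus a constant, contradicting $\Tr\to d$ only if the constants are right.

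This is the step I expect to be the main obstacle. I would first try to prove non-vanishing directly in the unscaled variables $v_{ip}$: for $E_2(p)\to-2$ one needs $\int\rho_{\tilde\gamma_p}^p\to2$, and vanishing gives $\int\rho^p\le\|\rho\|_{1}^{2-p}\|\rho\|_{q}^{\ldots}$-type interpolation, which needs care because $p\to1$.

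\textbf{Step 2: excluding dichotomy.} After translation, $\gamma_{p_n}\rightharpoonup\gamma$ weakly with $0<\Tr\gamma\le2$. Following the geometric localization method of \cite{ge}, we use the many-body formulation \eqref{eq} and IMS localization with cutoffs $\chi_R,\eta_R$. The localization error is $O(R^{-2})$. The concave functional $F$ splits up to small errors, because $\rho^p-\rho$ is superadditive-like up to cross terms, which are controlled by \eqref{2.28a}. This gives $\lim e_2(p_n)\ge e_{k}+e_{2-k}$ with fractional or integer particle numbers $k$. The integer case $k=1$ gives $\ge 2\lim e_1(p_n)=2e_1^*>e_2^*\ge\limsup e_2(p_n)$, which is a contradiction. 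Fractional splitting is excluded by the standard argument for fermionic ground states in \cite{i,ge}: the Fock-space localization produces convex combinations of integer-particle energies. Hence $\Tr\gamma=2$, $u_{ip_n}\to u_i$ strongly in $L^2$, and by $H^1$-boundedness also in $L^q$ for $2\le q<2^*$.

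\textbf{Step 3: identifying the limit and obtaining strong $H^1$ convergence.} By weak lower semicontinuity, $\Tr(-\Delta\gamma)\le\liminf\Tr(-\Delta\gamma_{p_n})$. For the nonlinearity, $\frac{\rho_n^{p_n}-\rho_n}{p_n(p_n-1)}\to\rho\log\rho$ almost everywhere. On $\{\rho_n\ge1\}$ we use dominated convergence via \eqref{2.28a}, and on $\{\rho_n<1\}$ we use Fatou, since there the integrand of $-F$ is nonnegative. This yields $e_2^*\le\mathcal E(\gamma)\le\liminf e_2(p_n)\le\limsup e_2(p_n)\le e_2^*$ by Lemma \ref{lemstri}. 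Therefore $\gamma$ is a minimizer, $e_2(p_n)\to e_2^*$, and equality forces $\Tr(-\Delta\gamma_{p_n})\to\Tr(-\Delta\gamma)$, which gives strong $H^1$ convergence.
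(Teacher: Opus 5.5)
Your Step~3, identifying the limit once $\int\rho_\gamma=2$, is correct and matches the end of the paper's Step~1. Steps~1 and~2, however, each contain a genuine gap.

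\textbf{Non-vanishing is never established.} Each of your attempts stops either at $\liminf e_2(p_n)\ge d$ or at ``$\le o(1)$ plus a constant'', and you flag this yourself as the main obstacle. The idea you are missing is to absorb the multiplier into the nonlinearity. Since $\mu_{ip_n}<0$, equation \eqref{equ} can be rewritten as
\[
-\Delta u_{ip_n}=\lambda_{in}\,\frac{(c_{in}\rho_{\gamma_{p_n}})^{p_n-1}-1}{p_n-1}\,u_{ip_n},\qquad \lambda_{in}:=|\mu_{ip_n}|\alpha_{p_n}^{-d(p_n-1)},\quad c_{in}:=\lambda_{in}^{-\frac{1}{p_n-1}}=|\mu_{ip_n}|^{-\frac{1}{p_n-1}}\alpha_{p_n}^{d}.
\]
The bound \eqref{2.15} gives $|\lambda_{in}-1|\le C(p_n-1)$, hence $\sup_n c_{in}\le\sup_n(1-C(p_n-1))^{-1/(p_n-1)}<\infty$. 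Now test the equation against $u_{ip_n}$. On the region $\{c_{in}\rho_{\gamma_{p_n}}<1\}$ the integrand is negative, so this region can be discarded. On the complement, \eqref{2.28a} with $q=1+2/d$ bounds the contribution by $C\int\rho_{\gamma_{p_n}}^{1+2/d}dx$, which tends to $0$ under vanishing. Hence $\Tr(-\Delta\gamma_{p_n})\to0$, contradicting Lemma~\ref{lem2.1}. No ``right constants'' are needed, because the multiplier now sits in a bounded rescaling of the density.

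Alternatively, your very first computation already closes the argument if you keep the region $\{\rho<1\}$ instead of merely noting its contribution is nonnegative. Fix $\varepsilon\in(0,1)$. Vanishing gives $\int_{\{\rho_{\gamma_{p_n}}<\varepsilon\}}\rho_{\gamma_{p_n}}\to2$, and on that set $\frac{\rho-\rho^{p_n}}{p_n(p_n-1)}\ge\frac{1-\varepsilon^{p_n-1}}{p_n(p_n-1)}\rho$. Therefore $\liminf e_2(p_n)\ge d+2\log(1/\varepsilon)$ for every $\varepsilon$, so $e_2(p_n)\to+\infty$, contradicting $\limsup e_2(p_n)\le e_2^*<\infty$.

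\textbf{Dichotomy is not fully excluded.} The localization inequality
\[
e_2(p_n)\ge\sum_{k=0}^2\Tr(G_k^{\chi,n})\big[e_k(p_n)+e_{2-k}(p_n)\big]+o(1),\qquad e_0=0,
\]
combined with $e_2^*<2e_1^*$, only yields $\Tr(G_1^{\chi,n})\to0$. The real issue is not ``fractional $k$'', which never arises here. It is the configuration $\Tr(G_2^{\chi,n})\to a$, $\Tr(G_0^{\chi,n})\to1-a$ with $a\in(0,1)$: both particles stay inside with weight $a$ and both escape with weight $1-a$. In that case the right-hand side equals $e_2(p_n)+o(1)$, so there is no energy contradiction, yet $\int\rho_\gamma=2a<2$. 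Appealing to ``convex combinations of integer-particle energies'' cannot rule this out.

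The missing ingredient is structural rather than energetic: $\gamma_{p_n}$ is a rank-two projection. The paper argues as follows.
\begin{itemize}
\item Once $\Tr(G_1^{\chi,n})\to0$, $\gamma_{p_n}$ is close in trace norm to $\Tr(G_0^{\chi,n})P_{0n}+\Tr(G_2^{\chi,n})P_{2n}$, where $P_{0n},P_{2n}$ are rank-two projections with asymptotically orthogonal ranges.
\item Hence $2=\Tr\gamma_{p_n}^2=2[(\Tr G_0^{\chi,n})^2+(\Tr G_2^{\chi,n})^2]+o(1)$.
\item Together with $\Tr G_0^{\chi,n}+\Tr G_2^{\chi,n}\to1$, this forces $\Tr G_0^{\chi,n}\,\Tr G_2^{\chi,n}\to0$.
\item Non-vanishing keeps $\Tr G_2^{\chi,n}$ away from $0$, so $\Tr G_2^{\chi,n}\to1$ and $\int\rho_\gamma=2$.
\end{itemize}

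A smaller point concerns splitting $F$ across the cutoffs. The error term carries the prefactor $1/(p_n(p_n-1))$, so you need a pointwise bound $1-\chi^{2p}-\eta^{2p}\le C(p-1)$, as in \eqref{2.43a}. Neither superadditivity of $t^p$ (which goes the wrong direction) nor \eqref{2.28a} supplies this.
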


\noindent \textbf{Proof.}
We shall carry out the proof by three steps.

$Step\ 1.$
We first claim that there exist a constant $R>0$  and a sequence $\{z_n\}\subset\R^d$ such that up to a subsequence  if necessary,
\begin{equation}\label{2.33a}
\lim\limits_{n\to\infty} \int_{B_R(z_n)}\rho_{\gamma_{p_n}}dx>0.
\end{equation}
To prove the above claim, recall from \cite{ho} the following Hoffmann-Ostenhof inequality
\begin{equation}\label{ho}
\mathrm{Tr}(-\Delta \gamma)\geq\inte|\nabla\sqrt{\rho_\gamma}|^2dx, \ \ \forall\ \gamma\in\mathcal{P}_2.
\end{equation}
Applying  the identity in \eqref{2.15}, we then obtain that the sequence $\big\{\sqrt{\rho_{\gamma_{p_n}}}\big\}$ is  bounded uniformly  in $H^1(\R^d)$. Thus, if \eqref{2.33a} is false,   then  the vanishing lemma (cf. \cite[Lemma 1.21]{minimax}) yields that
 \begin{align}\label{2.35a}
 \rho_{\gamma_{p_n}}=|u_{1p_n}|^2+|u_{2p_n}|^2\to 0\ \ \text{strongly in}\,\ L^r(\R^d)\ \text{as}\  n\to\infty,\ \   \forall\  r\in(1,\,  2^*/2),
 \end{align}
where we define $2^*=\frac{2d}{d-2}$ for $d\geq3$ and $2^*=\infty$ for $d=1,2$.
Applying \eqref{2.15}, there exists a constant $C>0$, independent of $n>0$, such that up to a subsequence  if necessary,
\begin{align}\label{2.39}
\arraycolsep=1.5pt
\left\{\begin{array}{lll}
\lim\limits_{n\to\infty}|\mu_{ip_n}|\alpha_{p_n}^{-d(p_n-1)}=1, \ \  i=1,2;\\[3mm]
\lim\limits_{n\to\infty}|\mu_{ip_n}|^{-\frac{1}{p_n-1}}\alpha_{p_n}^d
\leq \lim\limits_{n\to\infty}\big[1-C(p_n-1)\big]^{-\frac{1}{p_n-1}}<\infty,\ \   i=1,2.
\end{array}\right.
\end{align}
Applying \eqref{2.35a}, this then gives from \eqref{equ} and \eqref{2.28a} that for $i=1,2$,
\begin{align*}
0=&\inte|\nabla u_{ip_n}|^2dx-|\mu_{ip_n}|\alpha_{p_n}^{-d(p_n-1)}\inte\frac{|\mu_{ip_n}|^{-1}\alpha_{p_n}^{d(p_n-1)}\rho_{\gamma_{p_n}}^{p_n-1}-1}{p_n-1}u_{ip_n}^2dx\\
\geq&\inte|\nabla u_{ip_n}|^2dx-|\mu_{ip_n}|\alpha_{p_n}^{-d(p_n-1)}\\
&\  \cdot\int_{|\mu_{ip_n}|^{-1}\alpha_{p_n}^{d(p_n-1)}\rho_{\gamma_{p_n}}^{p_n-1}>1}
\frac{|\mu_{ip_n}|^{-1}\alpha_{p_n}^{d(p_n-1)}\rho_{\gamma_{p_n}}^{p_n-1}-1}{p_n-1}u_{ip_n}^2dx\\
\geq&\inte|\nabla u_{ip_n}|^2dx-C|\mu_{ip_n}|\alpha_{p_n}^{-d(p_n-1)}\big(|\mu_{ip_n}|^{-\frac{1}{p_n-1}}\alpha_{p_n}^{d}\big)^{\frac{2}{d}}\inte\rho_{\gamma_{p_n}}^{1+\frac{2}{d}}dx\geq o(1)\ \   \text{as}\ \, n\to\infty.
\end{align*}
We hence derive that $\lim\limits_{n\to\infty}\text{Tr}(-\Delta\gamma_{p_n})=0$, which however  contradicts with \eqref{2.15}. This therefore proves the claim \eqref{2.33a}.

Since the problem $e_2(p_n)$ is  translationally invariant, without loss of generality, we may assume $z_n=0$ in \eqref{2.33a}.
By the uniform  boundedness of $\{u_{ip_n}\}_n$  in $H^1(\R^d,\C)$ for $i=1,2$,  we then obtain that there exists a vector  $(u_1, u_2)\in \big(H^1(\R^d,\C)\big)^2\backslash\{(0, 0)\}$ such that up to a subsequence  if necessary,
\begin{eqnarray}\label{2.8}
u_{ip_n}\rightharpoonup  u_i\ \ \mathrm{weakly\ in}\, \ H^1(\R^d)\ \ \text{as}\ \ n\to\infty,\ \   i=1,2.
\end{eqnarray}
Set
\begin{eqnarray}\label{2.37}
\gamma:=\sum_{i=1}^{2}|u_i\rangle\langle u_i|\neq0.
\end{eqnarray}
If $\inte \rho_\gamma dx=2$,  then  we conclude from \eqref{2.8} that  
\begin{equation}\label{2.38}
\rho_{\gamma_{p_n}}\to\rho_\gamma\ \ \, \text{strongly in}\ \ L^r(\R^d)\ \ \text{as}\ \ n\to\infty, \ \   \forall\ r\in[1,\,  2^*/2),
\end{equation}
where we have used     Br\'ezis-Lieb Lemma  and the interpolation inequality.
Applying  Fatou's Lemma, this further implies from \eqref{2.28a} that
\begin{align}\label{2.40}
\lim\limits_{n\to\infty}\frac{1}{p_n(p_n-1)}\int_{\{\rho_{\gamma_{p_n}}(x)\geq1\}} \big(\rho_{\gamma_{p_n}}^{p_n}-\rho_{\gamma_{p_n}}\big)dx=\int_{\{\rho_\gamma(x)\geq1\}}\rho_\gamma\text{log}\rho_\gamma dx,
\end{align}
and
\begin{align}\label{2.41}
\liminf\limits_{n\to\infty}\frac{-1}{p_n(p_n-1)}\int_{\{\rho_{\gamma_{p_n}}(x)<1\}} \big(\rho_{\gamma_{p_n}}^{p_n}-\rho_{\gamma_{p_n}}\big)dx\geq-\int_{\{\rho_\gamma(x)<1\}}\rho_\gamma\text{log}\rho_\gamma dx.
\end{align}
Following Lemma \ref{lemstri}, we thus deduce  from \eqref{2.38}--\eqref{2.41} that  up to a subsequence if necessary,
\begin{align*}
\mathcal{E}(\gamma)\geq& e_2^*\geq\lim\limits_{n\to\infty}e_{2}(p_n)\\
=&\lim\limits_{n\to\infty}\Big[\text{Tr}(-\Delta\gamma_{p_n})-\frac{1}{p_n(p_n-1)}\inte \big(\rho_{\gamma_{p_n}}^{p_n}-\rho_{\gamma_{p_n}}\big)dx\Big]\\
\geq&\text{Tr}(-\Delta\gamma)-\inte \rho_\gamma\text{log}\rho_\gamma dx=\mathcal{E}(\gamma),
\end{align*}
which then yields that $\gamma$ is a minimizer of   $e_2^*$, and
\begin{equation*}
u_{ip_n}\to u_i\ \ \ \text{strongly in}\ \, H^1(\R^d,\C)\ \ \text{as}\ \ n\to\infty,\ \   i=1,2.
\end{equation*}

$Step\ 2$.  In order to complete the proof of Lemma \ref{prop},  we conclude from above that it next  suffices to prove $\inte \rho_\gamma dx=2$.  Actually, applying an adaptation of the classical dichotomy result (cf. \cite[Lemma 26]{ge}, \cite[Section 3.3]{begain}), we deduce from above that there exists a sequence $\{R_n\}\subset\R$, where $R_n\to\infty$ as $n\to\infty$, such that up to a subsequence  if necessary,
\begin{equation}\label{17}
0<\lim\limits_{n\to\infty}\int_{|x|\leq R_n}\rho_{\gamma_{p_n}}dx=\inte\rho_\gamma dx,\ \  \lim\limits_{n\to\infty}\int_{R_n\leq|x|\leq 6R_n}\rho_{\gamma_{p_n}}dx=0.
\end{equation}
Choose a cut-off function $\chi\in C_0^\infty(\R^d)$ such that $0\leq\chi\leq1$ in $\R^d$, where $\chi(x)=1$ holds for $|x|\leq1$, and $\chi(x)=0$ holds for $|x|\geq2$. Denote
\begin{equation}\label{2.41a}
\chi_{R_n}(x):=\chi(x/R_n),\ \  \eta_{R_n}(x):=\sqrt{1-\chi_{R_n}^2(x)}.
\end{equation}
It then follows from  the IMS formula (cf. \cite[Theorem 3.2]{ims}) that
\begin{align}\label{2.42}
\text{Tr}(-\Delta\gamma_{p_n})\geq \text{Tr}(-\Delta\chi_{R_n} \gamma_{p_n}\chi_{R_n})+\text{Tr}(-\Delta\eta_{R_n}\gamma_{p_n}\eta_{R_n})+o(1)\ \   \text{as}\ \ n\to\infty.
\end{align}
Moreover,   the Mean Value Theorem gives that  for any $a\in[0,1]$ and  $t\in[0,1]$, there exists  a constant $\theta\in(0,1)$ such that
\begin{align}\label{2.43a}
0\leq a^2(1-a^t)=-ta^{2+\theta t}\text{log}a\leq ta^{3/2}.
\end{align}
Employing  the interpolation inequality and the uniform boundedness of $\{\sqrt{\rho_{\gamma_{p_n}}}\}$ in $H^1(\R^d)$,
we then calculate  from \eqref{17} and \eqref{2.43a} that
\begin{align}\label{2.43}
&\inte\rho_{\gamma_{p_n}}^{p_n}dx\nonumber\\
=&\inte(\chi_{R_n}^2\rho_{\gamma_{p_n}})^{p_n}dx+\inte(\eta_{R_n}^2\rho_{\gamma_{p_n}})^{p_n}dx\nonumber\\
&+\inte\Big[\chi_{R_n}^2\rho_{\gamma_{p_n}}^{p_n}-\big(\chi_{R_n}^2\rho_{\gamma_{p_n}}\big)^{p_n}\Big]dx
+\inte\Big[\eta_{R_n}^2\rho_{\gamma_{p_n}}^{p_n}-\big(\eta_{R_n}^2\rho_{\gamma_{p_n}}\big)^{p_n}\Big]dx\nonumber\\
=&\inte(\chi_{R_n}^2\rho_{\gamma_{p_n}})^{p_n}dx+\inte(\eta_{R_n}^2\rho_{\gamma_{p_n}})^{p_n}dx+
\int_{R_n\leq|x|\leq2R_n}\chi_{R_n}^2\rho_{\gamma_{p_n}}^{p_n}(1-\chi_{R_n}^{2p_n-2})dx\nonumber\\
&
+\int_{R_n\leq|x|\leq2R_n}\eta_{R_n}^2\rho_{\gamma_{p_n}}^{p_n}(1-\eta_{R_n}^{2p_n-2})dx\\
\leq&\inte(\chi_{R_n}^2\rho_{\gamma_{p_n}})^{p_n}dx+\inte(\eta_{R_n}^2\rho_{\gamma_{p_n}})^{p_n}dx\nonumber\\
&
+2(p_n-1)\int_{R_n\leq|x|\leq2R_n}(\chi_{R_n}^{3/2}+\eta_{R_n}^{3/2})\rho_{\gamma_{p_n}}^{p_n}dx\nonumber\\
=&\inte(\chi_{R_n}^2\rho_{\gamma_{p_n}})^{p_n}dx+\inte(\eta_{R_n}^2\rho_{\gamma_{p_n}})^{p_n}dx
+o(p_n-1)\ \  \text{as}\ \ n\to\infty.\nonumber
\end{align}
As a consequence of \eqref{2.42} and \eqref{2.43},
we  derive that
\begin{align}\label{2.31a}
e_2(p_n)=\mathcal{E}_{p_n}(\gamma_{p_n})
\geq&\text{Tr}(-\Delta\chi_{R_n} \gamma_{p_n}\chi_{R_n})+F(\chi_{R_n}^2\rho_{\gamma_{p_n}})\\
&+\text{Tr}(-\Delta\eta_{R_n}\gamma_{p_n}\eta_{R_n})+F(\eta_{R_n}^2\rho_{\gamma_{p_n}})+o(1)\ \  \text{as}\ \ n\to\infty,\nonumber
\end{align}
where the function $F(\cdot)$ is as in \eqref{f}.

We now denote
\begin{align}\label{psi}
\Psi_n(x_1, x_2):=\frac{1}{\sqrt{2}}\big[u_{1p_n}(x_1)u_{2p_n}(x_2)-u_{1p_n}(x_2)u_{2p_n}(x_1)\big]\in H_a^1(\R^{2d},\C),
\end{align}
and let
$G_k^{\chi, n}=(G_k^{\chi, n})^*\geq0$ be a trace-class operator on $L^2(\R^{kd},\C)$, which satisfies
\begin{align}\label{g}
\arraycolsep=1.5pt
\left\{\begin{array}{lll}
G_0^{\chi, n}=\int_{\R^d\times\R^d}\big[1-\chi_{R_n}^2(x_1)\big]\big[1-\chi_{R_n}^2(x_2)\big]\, \big|\Psi_n(x_1,x_2)\big|^2dx_1dx_2,\\[3mm]
G_1^{\chi, n}(x_1;\, y_1)=2\chi_{R_n}(x_1)\chi_{R_n}(y_1)\int_{\R^d}\big[1-\chi_{R_n}^2(x_2)\big]\Psi_n(x_1, x_2)\overline{\Psi_n(y_1, x_2)}dx_2,\\[3mm]
G_2^{\chi, n}(x_1, x_2;\, y_1, y_2)=\chi_{R_n}(x_1)\chi_{R_n}(x_2)\chi_{R_n}(y_1)\chi_{R_n}(y_2)\Psi_n(x_1, x_2)\overline{\Psi_n(y_1, y_2)}.
\end{array}\right.
\end{align}
Applying \eqref{17}, we then derive from \eqref{psi} and \eqref{g} that
\begin{equation}\label{2.50a}
\sum_{k=0}^2\text{Tr}(G_k^{\chi, n})=1,\ \   \forall\ n>0,
\end{equation}
  and
\begin{equation}\label{2.50}
\begin{split}
\sum_{k=0}^2k\text{Tr}(G_k^{\chi, n})=&2\int_{\R^d\times \R^d} \chi_{R_n}^2(x_1)\big|\Psi_n(x_1, x_2)\big|^2dx_1dx_2\\
=&\inte \chi_{R_n}^2\rho_{\gamma_{p_n}}dx=\inte\rho_\gamma dx+o(1)
\ \ \, \text{as}\ \ n\to\infty.
\end{split}
\end{equation}

$Step\ 3.$ In order to prove that $\inte\rho_\gamma dx=2$, following \eqref{2.50a} and \eqref{2.50}, it still remains to prove the claim that    up to a subsequence  if necessary,
\begin{align}\label{2.50b}
\lim\limits_{n\to\infty}\text{Tr}(G_0^{\chi, n})=\lim\limits_{n\to\infty}\text{Tr}(G_1^{\chi, n})=0, \ \ \ \lim\limits_{n\to\infty}\text{Tr}(G_2^{\chi, n})=1.
\end{align}
To prove the claim (\ref{2.50b}), since $F(0)=0$ and the functional $F(\rho)$ is concave in $\rho\ge 0$, the same argument of  \cite[Equation (86)]{ge} first gives from \eqref{eq} and \eqref{2.31a} that
\begin{align}\label{2.35}
e_2(p_n)
\geq& \sum_{k=1}^2\text{Tr}(G_k^{\chi, n})\Big[ \text{Tr}\big(H(k)\tilde{G}_k^{\chi, n}\big)+F(\rho_{\tilde{G}_k^{\chi, n}})\Big]\nonumber\\
&+\sum_{k=1}^2\text{Tr}(G_k^{\eta, n})\Big[ \text{Tr}\big(H(k)\tilde{G}_k^{\eta, n}\big)+F(\rho_{\tilde{G}_k^{\eta, n}})\Big]+o(1)\\
\geq& \sum_{k=1}^2\text{Tr}(G_k^{\chi, n})e_k(p_n)+\sum_{k=1}^2\text{Tr}(G_k^{\eta, n})e_k(p_n)+o(1)
\ \ \ \text{as}\ \, n\to\infty,\nonumber
\end{align}
where  $\tilde{G}_k^{\chi, n}:=G_k^{\chi, n}/\text{Tr}(G_k^{\chi, n})$, and
$$
\rho_{\tilde{G}_{1}^{\chi, n}}(x):=\tilde{G}_{1}^{\chi, n}(x; x),\ \ \ \rho_{\tilde{G}_{2}^{\chi, n}}(x):=2\inte\tilde{G}_{2}^{\chi, n}(x, x_2;x, x_2)dx_2.
$$
Denote $e_0(p):=0$ and note from \eqref{g} that $\text{Tr}(G_k^{\eta, n})=\text{Tr}(G_{2-k}^{\chi, n})$ for $k=1,2$. We then conclude from \eqref{2.35} that
\begin{equation}\label{2.36}
\begin{split}
e_2(p_n)\geq&\sum_{k=1}^2\text{Tr}(G_k^{\chi, n})e_k(p_n)+\sum_{k=0}^1\text{Tr}(G_k^{\chi, n})e_{2-k}(p_n)+o(1)\\
=&\sum_{k=0}^2\text{Tr}(G_k^{\chi, n})\big[e_k(p_n)+e_{2-k}(p_n)\big]+o(1)\ \   \text{as}\ \, n\to\infty.
\end{split}\end{equation}
If  $\liminf\limits_{n\to\infty}\text{Tr}(G_1^{\chi, n})>0$, then it follows from \eqref{2.50a} and Lemma \ref{lemstri} that  up to a subsequence  if necessary,
\begin{equation*}
\begin{split}
&\lim\limits_{n\to\infty}\Big\{\sum_{k=0}^2\text{Tr}(G_k^{\chi, n})\big[e_k(p_n)+e_{2-k}(p_n)\big]-e_2(p_n)\Big\}\\
=&\lim\limits_{n\to\infty}\Big\{\sum_{k=0}^2\text{Tr}(G_k^{\chi, n})\big[e_k(p_n)+e_{2-k}(p_n)\big]-\sum_{k=0}^2\text{Tr}(G_k^{\chi, n})e_2(p_n)\Big\}\\
=&\lim\limits_{n\to\infty}\Big\{\text{Tr}(G_1^{\chi, n})\big[2e_1(p_n)-e_2(p_n)\big]\Big\}>0,
\end{split}\end{equation*}
which however contradicts  with \eqref{2.36}. We thus obtain that  $\liminf\limits_{n\to\infty}\text{Tr}(G_1^{\chi, n})=0$.

Up to a subsequence if necessary, we can now assume that  $\lim\limits_{n\to\infty}\text{Tr}(G_1^{\chi, n})=0$. Denote
\begin{align*}
\arraycolsep=1.5pt
\left\{\begin{array}{lll}
\Psi_{0n}(x_1, x_2):=\big(\text{Tr}(G_0^{\chi, n})\big)^{-\frac{1}{2}}\sqrt{1-\chi_{R_n}^2(x_1)}\sqrt{1-\chi_{R_n}^2(x_2)}\Psi_n(x_1, x_2),\\[2mm]
\Psi_{2n}(x_1, x_2):=\big(\text{Tr}(G_2^{\chi, n})\big)^{-\frac{1}{2}}\chi_{R_n}(x_1)\chi_{R_n}(x_2)\Psi_n(x_1, x_2),
\end{array}\right.
\end{align*}
and   the integral kernel of the one-particle density matrix $\gamma_\Psi$ associated with $\Psi\in H^1_a(\R^{2d},\C)$ is defined by
$$\gamma_\Psi(x, y):=2\inte \Psi(x, x_2)\overline{\Psi(y, x_2)}dx_2.
$$
It then follows from  \eqref{psi} and \eqref{g} that  $\gamma_{\Psi_n}$, $\gamma_{\Psi_{0n}}$ and  $\gamma_{\Psi_{2n}}$ are projectors, i.e.,
\begin{align}\label{gp}
\gamma_{\Psi_n}=\gamma_{\Psi_n}^2,\ \    \gamma_{\Psi_{0n}}=\gamma_{\Psi_{0n}}^2,\ \  \gamma_{\Psi_{2n}}=\gamma_{\Psi_{2n}}^2.
\end{align}
Since  
 $\lim\limits_{n\to\infty}\text{Tr}(G_1^{\chi, n})=0$,  one can calculate that
\begin{equation}\label{2.54}
\begin{split}
&\lim\limits_{n\to\infty}\big\|\gamma_{\Psi_n}-\text{Tr}(G_0^{\chi, n})\gamma_{\Psi_{0n}}-\text{Tr}(G_2^{\chi, n})\gamma_{\Psi_{2n}}\big\|_{\fS_{1}}=0,
\end{split}\end{equation}
and
\begin{align}\label{2.55}
0\leq&\lim\limits_{n\to\infty}\text{Tr}(G_0^{\chi, n})\text{Tr}(G_2^{\chi, n})\text{Tr}\big(\gamma_{\Psi_{0n}}\gamma_{\Psi_{2n}}\big)
\leq\lim\limits_{n\to\infty}\big(\text{Tr}(G_1^{\chi, n})\big)^2=0,
\end{align}
where $\fS_{1}$ denotes the space of trace-class operators on $L^2(\R^d,\C)$.  We thus deduce from \eqref{2.54}  that
\begin{align*}
\text{Tr}(G_0^{\chi, n})\gamma_{\Psi_{0n}}+\text{Tr}(G_2^{\chi, n})\gamma_{\Psi_{2n}}+o(1)=\gamma_{\Psi_n}\ \   \text{as}\ \ n\to\infty,
\end{align*}
which  further implies from   \eqref{gp} and \eqref{2.55} that
\begin{equation}\label{2.59}
\begin{split}
2=&\lim\limits_{n\to\infty}\text{Tr}(\gamma_{\Psi_n})
=\lim\limits_{n\to\infty}\text{Tr}(\gamma_{\Psi_n}^2)\\
=&\lim\limits_{n\to\infty}\text{Tr}\Big[\big(\text{Tr}(G_0^{\chi, n})\big)^2\gamma_{\Psi_{0n}}+\big(\text{Tr}(G_2^{\chi, n})\big)^2\gamma_{\Psi_{2n}}\Big]\\
=&\lim\limits_{n\to\infty}2\Big[\big(\text{Tr}(G_0^{\chi, n})\big)^2+\big(\text{Tr}(G_2^{\chi, n})\big)^2\Big].
\end{split}\end{equation}
Since $\lim\limits_{n\to\infty}\text{Tr}(G_1^{\chi, n})=0$,
it follows from \eqref{2.50a}, \eqref{2.50} and \eqref{2.59} that
$$
\lim\limits_{n\to\infty}2\text{Tr}(G_2^{\chi, n})=\inte\rho_\gamma dx>0,
$$
and
\begin{align*}
\lim_{n\to\infty}\big[\text{Tr}(G_0^{\chi,n})+
\text{Tr}(G_2^{\chi,n})\big]&=1=
\lim_{n\to\infty}\left[\big(\text{Tr}(G_0^{\chi,n})\big)^2+
\big(\text{Tr}(G_2^{\chi,n})\big)^2\right].
\end{align*}
Consequently, we have
$\lim\limits_{n\to\infty}\text{Tr}(G_0^{\chi,n})\text{Tr}(G_2^{\chi,n})=0$.
Since the limit $\lim\limits_{n\to\infty}2\text{Tr}(G_2^{\chi,n})$ is positive, it follows
that $\lim\limits_{n\to\infty}\text{Tr}(G_0^{\chi,n})=0$ and
$\lim\limits_{n\to\infty}\text{Tr}(G_2^{\chi,n})=1$, which hence proves the claim \eqref{2.50b}. The proof of Lemma \ref{prop} is therefore complete.  \qed

\subsection{Proof of Theorem  \ref{thm1}}\label{sec3}

In this subsection, we are ready to finish the proof of Theorem \ref{thm1}.

\vspace{0.15cm}

\noindent \textbf{Proof of Theorem \ref{thm1}.}
(1). Let $e_2(p)$ be as in \eqref{2.4}. We deduce from Lemmas \ref{lemstri} and   \ref{prop} that there exist two sequences $\{p_{n_k}\}$ and $\{p_{n_l}\}$ satisfying $\lim\limits_{n_k\to\infty}p_{n_k}=1^+$ and $\lim\limits_{n_l\to\infty}p_{n_l}=1^+$ such that
$$
0<e_2^*=\lim\limits_{n_k\to\infty}e_2(p_{n_k})=\liminf\limits_{p\searrow1}e_2(p)\leq\limsup\limits_{p\searrow1}e_2(p)=\lim\limits_{n_l\to\infty}e_2(p_{n_l})= e_2^*,
$$
which then yields that
\begin{equation}\label{3.6a}
	\lim\limits_{p\searrow1}e_2(p)=e_2^*>0.
\end{equation}
This proves Theorem \ref{thm1} (1).

(2). Let $\gamma_{p_n}=\sum_{i=1}^2|u_{ip_n}\rangle\langle u_{ip_n}|$ and $\gamma=\sum_{i=1}^2|u_i\rangle\langle u_i|$  be given by  \eqref{2.5a}.
We first claim that there exist constants $-\infty<\mu_1\leq\mu_2<\infty$ such that
\begin{align}\label{3.02}
-\Delta u_i=u_i\text{log}\rho_\gamma+\mu_iu_i\ \   \text{in}\ \, \R^d,\ \   i=1, 2.
\end{align}
Actually, recall  from \eqref{equ} that
\begin{align}\label{3.01a}
-\Delta u_{ip_n}=&\frac{\rho_{\gamma_{p_n}}^{p_n-1}-1}{p_n-1}u_{ip_n}
+\frac{1+\mu_{ip_n}\alpha_{p_n}^{-d(p_n-1)}}{p_n-1}u_{ip_n}
 \ \ \text{in}\ \, \R^d,\ \   i=1, 2.
\end{align}
It can be verified from \eqref{2.28a} and \eqref{2.5a} that
$$
\rho_{\gamma_{p_n}}\to\rho_\gamma\ \ \text{strongly in}\ L^r(\R^d)\ \text{as}\ n\to\infty,\ \ \forall\ r\in[1,\, 2^*/2),
$$
and for sufficiently large $n>0$,
\begin{align*}
0\leq\frac{\rho_{\gamma_{p_n}}^{p_n-1}-1}{p_n-1}|u_{ip_n}|\leq C(p_0)\rho_{\gamma_{p_n}}^{p_0}\ \ \text{in}\ \big\{x\in\R^d:\, \rho_{\gamma_{p_n}}(x)\geq1\big\},
\end{align*}
where the constant $p_0\in \big(1/2, \, 2^*/2\big)$, and $C(p_0)>0$ depends only on $p_0$. We then deduce that for any $\varphi\in C_c^\infty(\R^d,\C)$,
\begin{align}\label{3.4}
\lim\limits_{n\to\infty}\int_{\rho_{\gamma_{p_n}}\geq1}\frac{\rho_{\gamma_{p_n}}^{p_n-1}-1}{p_n-1}u_{ip_n}\bar{\varphi}dx
=\int_{\rho_{\gamma}\geq1}u_i\bar{\varphi}\, \text{log}\rho_\gamma dx, \ \ i=1,2.
\end{align}
Moreover, it follows from \cite[Lemma 2.1 (ii)]{wzq} that  if $\rho_{\gamma_{p_n}}(x)\leq1$, then we have
\begin{align*}
0\leq\frac{1-\rho_{\gamma_{p_n}}^{p_n-1}}{p_n-1}|u_{ip_n}|\leq C,\ \ i=1,2, 
\end{align*}
where  $C>0$ is independent of $x\in\R^d$ and $n>0$.
By the dominated convergence theorem, this yields  that for any $\varphi\in C_c^\infty(\R^d,\C)$,
\begin{align}\label{3.5}
\lim\limits_{n\to\infty}\int_{\rho_{\gamma_{p_n}}\leq1}\frac{\rho_{\gamma_{p_n}}^{p_n-1}-1}{p_n-1}u_{ip_n}\bar{\varphi}dx
=\int_{\rho_{\gamma}\leq1}u_i\bar{\varphi}\, \text{log}\rho_\gamma dx,\ \ i=1,2.
\end{align}
Applying \eqref{2.15}, we thus obtain  from  \eqref{3.4} and \eqref{3.5} that  \eqref{3.02} holds true.

We next prove that  up to a subsequence if necessary,
\begin{align}\label{3.2}
	u_{ip_n}\to u_i\ \   \text{strongly in}\ \, L^\infty(\R^d,\C)\ \, \text{as}\ \ n\to\infty,\ \   i=1,2.
\end{align}
Note from \eqref{3.01a} that
\begin{align}\label{3.01}
	-\Delta u_{ip_n}=&|\mu_{ip_n}|\alpha_{p_n}^{-d(p_n-1)}u_{ip_n}\frac{\big(|\mu_{ip_n}|^{\frac{-1}{p_n-1}}\alpha_{p_n}^{d}
\rho_{\gamma_{p_n}}\big)^{p_n-1}-1}{p_n-1}
	\ \,\ \text{in}\ \, \R^d,\ \,   i=1, 2.
\end{align}
Set  $\tilde{u}_{ip_n}:=a_0u_{ip_n}$ and  $\tilde{u}_i:=a_0u_i$ for some constant $a_0>0$. Following \eqref{2.39}, one can choose $a_0>0$ so that
\begin{align}\label{3.5c}
-2\text{log}a_0+\mu_i<0,\ \ \  a_0^{-2}\sup_{n}\Big(|\mu_{ip_n}|^{\frac{-1}{p_n-1}}\alpha_{p_n}^{d}\Big)<1,\ \ i=1, 2,
\end{align}
where $\mu_i$ is as in  \eqref{3.02}.
Applying \eqref{3.5c}  and Kato's inequality  (cf. \cite[Theorem X.27]{kato}), we hence deduce  from \eqref{3.02} and \eqref{3.01} that
\begin{align}\label{3.5a}
&-\Delta \big(|\tilde u_{1p_n}|+|\tilde u_{2p_n}|\big)
\leq C \big(|\tilde u_{1p_n}|+|\tilde u_{2p_n}|\big)\Big(\frac{\big(|\tilde u_{1p_n}|+|\tilde u_{2p_n}|\big)^{2p_n-2}-1}{p_n-1}\Big)_+\ \  \text{in}\ \, \R^d,
\end{align}
and
\begin{align}\label{3.6b}
-\Delta \big(|\tilde u_1|+|\tilde u_2|\big)\leq \big(|\tilde u_1|+|\tilde u_2|\big) \text{log} \big(|\tilde u_1|+|\tilde u_2|\big) ^2\ \  \text{in}\ \, \R^d,
\end{align}
where  $C:=\sup\limits_{n}|\mu_{1p_n}|\alpha_{p_n}^{-d(p_n-1)}\in(0, \infty)$.

Similar to  \cite[Lemma 3.10]{shuai}, we deduce from \eqref{3.02} and  \eqref{3.6b} that there exist constants $\alpha\in(0, 1)$, $C>0$ and $\theta>0$ such that
\begin{align}\label{3.8a}
u_i\in C^{2, \alpha}(\R^d,\C),\ \   |u_i(x)|\leq C e^{-\theta|x|}\ \  \text{in}\,\ \R^d,\ \   i=1,2.
\end{align}
Moreover, since the sequence $\{u_{ip_n}\}_n$  is bounded uniformly in $H^1(\R^d,\C)$ for $i=1,2$, the same argument of  \cite[Equation (2.11)]{zfl} gives from  \eqref{3.5a} that
\begin{align}\label{3.9a}
\sup\limits_{n}\big(\|u_{1p_n}\|_\infty+\|u_{2p_n}\|_\infty\big)<\infty.
\end{align}
Denote
$$f_{ip_n}:=\frac{\rho_{\gamma_{p_n}}^{p_n-1}-1}{p_n-1}u_{ip_n}
+\frac{1+\mu_{ip_n}\alpha_{p_n}^{-d(p_n-1)}}{p_n-1}u_{ip_n},\ \ i=1,2.$$
Note from \eqref{2.8a} and \eqref{2.28a} that there exists a constant $C>0$, independent of $n>0$,  such that for sufficiently large $n>0$,
\begin{align}\label{3.10b}
\frac{|\rho_{\gamma_{p_n}}^{p_n-1}-1|}{p_n-1}|u_{ip_n}|
\leq\rho_{\gamma_{p_n}}^{\frac{1}{2}}\big|\text{log}\rho_{\gamma_{p_n}}\big|+C \rho_{\gamma_{p_n}}
\leq 2C \big(\rho_{\gamma_{p_n}}^{\frac{1}{3}} +\rho_{\gamma_{p_n}} \big)\ \   \text{in}\ \, \R^d,
\end{align}
which then implies from \eqref{3.9a} that the sequence $\{f_{ip_n}\}_n$ is bounded uniformly in $L^q(\R^d,\C)$  for any $q\geq3$.

Using the $L^p$-theory (cf. \cite{elli})  and De Giorgi-Nash-Moser theory (cf. \cite[Theorem 4.1]{hq}), we deduce from above that  for any fixed $q>\max\{3, d/2\}$ and $ R>0$,
\begin{align}\label{3.10a}
\sup\limits_{n}\|u_{ip_n}\|_{W^{2,q}(B_R)}<\infty,
\end{align}
and
\begin{align}\label{3.11a}
\sup_{x\in B_{1}(y)} |u_{ip_n}(x)| \leq C\Big(\|f_{ip_n}\|_{L^q(B_2(y))} + \|u_{ip_n}\|_{L^2(B_2(y))}\Big),\ \  \forall\ y\in\R^d,
\end{align}
where the constant $C>0$ is independent of $n>0$ and $y\in\R^d$.
Since the embedding $W^{2,q}(B_R,\C)\hookrightarrow C(B_R,\C)$ is compact (cf.\cite[Theorem 7.26]{elli}) for any $q>d/2$, we conclude from  \eqref{2.5a} and \eqref{3.10a}  that there exists a subsequence, still denoted by $\{u_{ip_n}\}_{n}$, of $\{u_{ip_n}\}_{n}$ such that for any  fixed $R>0$,
\begin{equation}\label{conv1}
u_{ip_n}\to  u_i\ \ \mbox{strongly in}\ \ L^\infty(B_R,\C)\ \ \mathrm{as}\ n\to\infty,\ \   i=1,2.
\end{equation}
Applying \eqref{2.5a}, it follows from  \eqref{3.9a}, \eqref{3.10b} and \eqref{3.11a} that  for $i=1,2$,
\begin{equation}\label{conv2}
\lim_{|x| \to +\infty} |u_{ip_n}(x)| = 0\
\ \mathrm{uniformly\ for \ sufficiently\ large}\ n>0.
\end{equation}
As a consequence of  \eqref{conv1} and \eqref{conv2}, we  derive from \eqref{3.8a} that \eqref{3.2} holds true. This therefore completes the proof of Theorem  \ref{thm1}. \qed

\section{Qualitative Properties of Logarithmic  Fermionic Problems}\label{subsec3.2}
In this section, we shall study the qualitative properties of the logarithmic  fermionic problems $e_2^*$ and (\ref{logsobo}), based on which we finally address the proofs of Theorems  \ref{thm2} and \ref{thmlog}.

For convenience, we first define
\begin{align}\label{b1}
B(t):=t^2\text{log}t^2+A(t),\,\ t\geq0,
\end{align}
where
\begin{align}\label{a}
\arraycolsep=1.5pt
0\leq A(t):=
\left\{\begin{array}{lll}
-t^2\text{log}t^2,\ \,&\mathrm{if}\,\ 0\leq t\leq e^{-3},\\[3mm]
3t^2+4e^{-3}t-e^{-6}, \ \  &\mathrm{if}\,\  t\geq e^{-3}.
\end{array}\right.
\end{align}
One can check  that the function $A(t)$ is convex and increasing  in $t\geq0$, and there exists a constant $C(q)>0$ such that
\begin{align}\label{b}
0\leq B(t)\leq C(q)t^{2q}\ \,\ \text{in}\,\ \R^+,\ \  \forall\, q\in\big(1,\, 1+2/d\big).
\end{align}
Let $\big\{\gamma_n=\sum_{i=1}^2|u_{in}\rangle\langle u_{in}|\big\}$ be a minimizing sequence of the problem $e_2^*$ given by \eqref{e}. The following lemma shows that up to the translations, $\big\{(u_{1n}, u_{2n})\big\}$ is relatively compact in $\big(H^1(\R^d,\C)\big)^2$, which hence proves Theorem \ref{thm2} (1).

\begin{lem}\label{lem3.1}
Let $\big\{\gamma_n=\sum_{i=1}^2|u_{in}\rangle\langle u_{in}|\big\}\subset \mathcal{P}_2$ be a minimizing sequence of the problem $e_2^*$. Then up to a subsequence and  translations if necessary,
\begin{equation}\label{3.18}
u_{in}\to  u_i\ \ \mathrm{strongly\ in}\, \ H^1(\R^d, \C)\ \ \text{as}\ \ n\to\infty,\ \   i=1,2,
\end{equation}
where $\gamma=\sum_{i=1}^2|u_i\rangle \langle u_i|$ is a minimizer of $e_2^*$.
\end{lem}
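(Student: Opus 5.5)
The plan is to run a concentration-compactness argument directly on the minimizing sequence, mirroring the three steps of Lemma \ref{prop}, with the power nonlinearity $F$ replaced by the logarithmic one. We split $-t\log t$ using the function $B$ from \eqref{b1}: $-\rho\log\rho = A(\sqrt\rho)-B(\sqrt\rho)$, with $A$ convex, increasing and nonnegative and $0\le B(t)\le C(q)t^{2q}$.

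\textbf{Bounds and non-vanishing.} First we get uniform bounds. By the Lieb--Thirring inequality \eqref{lt} and \eqref{b}, $\int B(\sqrt{\rho_{\gamma_n}})\le C\|\rho_{\gamma_n}\|_q^q$ is controlled by a power strictly less than one of $\mathrm{Tr}(-\Delta\gamma_n)$. Hence $\mathcal E(\gamma_n)\to e_2^*$ forces $\sup_n\mathrm{Tr}(-\Delta\gamma_n)<\infty$ and $\sup_n\int A(\sqrt{\rho_{\gamma_n}})<\infty$. By \eqref{ho}, $\sqrt{\rho_{\gamma_n}}$ is bounded in $H^1$.

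Next we rule out vanishing. This is where the statement before the lemma suggests using \eqref{1.5}, i.e.\ $e_2^*>0$ together with Lemma \ref{lemstri}. If $\sup_z\int_{B_1(z)}\rho_{\gamma_n}\to0$, the vanishing lemma gives $\rho_{\gamma_n}\to0$ in $L^q$, so $\int B(\sqrt{\rho_{\gamma_n}})\to0$. Then
\[
\liminf_n\mathcal E(\gamma_n)\ \ge\ \liminf_n\Big[\mathrm{Tr}(-\Delta\gamma_n)+\int A(\sqrt{\rho_{\gamma_n}})\Big].
\]
To contradict this we must show the right side is strictly larger than $e_2^*$. The cleanest route is through the dilation $\gamma^\lambda(x,y)=\lambda^d\gamma(\lambda x,\lambda y)$, which gives
\[
\mathcal E(\gamma^\lambda)=\lambda^2\mathrm{Tr}(-\Delta\gamma)-\int\rho_\gamma\log\rho_\gamma-2d\log\lambda .
\]
Optimizing in $\lambda$ shows that a minimizing sequence can be rescaled to satisfy $\mathrm{Tr}(-\Delta\gamma_n)\to d$, as in Lemma \ref{lem2.1}. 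After that rescaling, the rescaled sequence must satisfy $-\int\rho_{\gamma_n}\log\rho_{\gamma_n}\to e_2^*-d$. On the other hand, vanishing makes $\rho_{\gamma_n}$ spread out with bounded kinetic energy, so $-\int\rho\log\rho\to+\infty$. This is the contradiction.

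\textbf{Main obstacle.} This last claim is the step I expect to be hardest. To justify that $-\int\rho\log\rho$ blows up under vanishing, I would first try the following. By Jensen on unit cubes $Q$, with masses $m_Q=\int_Q\rho\to0$ uniformly,
\[
-\int\rho\log\rho\ \ge\ \sum_Q\Big(-\int_Q\rho\log\frac{\rho}{m_Q}\Big)-\sum_Q m_Q\log m_Q .
\]
The last sum is at least $-2\log\max_Q m_Q\to\infty$. The first term is bounded below by a local logarithmic Sobolev inequality (\eqref{logsobo1} on cubes) together with the kinetic bound.

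\textbf{Weak limit, dichotomy and energy limit.} Once non-vanishing is established, we translate and extract $u_{in}\rightharpoonup u_i$ weakly in $H^1$, with $\gamma\neq0$. We exclude dichotomy exactly as in Steps 2--3 of Lemma \ref{prop}. We localize with $\chi_{R_n},\eta_{R_n}$ and use the IMS formula \eqref{2.42}. We handle the nonlinear cross term using $\rho\log\rho$ on the annulus where $\rho_{\gamma_n}$ has vanishing mass; the analogue of \eqref{2.43} follows from $|t\log t|\le C(t^{1/2}+t^q)$ applied there. We use concavity of $-\rho\log\rho$ to pass to the $N$-body formulation \eqref{eq}, via the geometric localization of \cite{ge}. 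This yields
\[
e_2^*\ \ge\ \sum_k \mathrm{Tr}(G_k)\,(e_k^*+e_{2-k}^*),\qquad e_0^*:=0 .
\]
The strict binding $e_2^*<2e_1^*$ from Lemma \ref{lemstri} then kills $G_1$. The projector identity \eqref{2.59} forces $\mathrm{Tr}\,G_0\to0$, so $\int\rho_\gamma=2$.

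With full mass, we get $\rho_{\gamma_n}\to\rho_\gamma$ in $L^1\cap L^q$. Hence $\int B(\sqrt{\rho_{\gamma_n}})\to\int B(\sqrt{\rho_\gamma})$, while Fatou applies to the nonnegative $A$-part and lower semicontinuity applies to the kinetic term. This gives $\mathcal E(\gamma)\le e_2^*$, so $\gamma\in\mathcal P_2$ is a minimizer. Equality in every inequality gives convergence of the kinetic energies, which upgrades weak to strong $H^1$ convergence and proves \eqref{3.18}.
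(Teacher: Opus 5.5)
Your proposal is correct. Apart from one step it follows the paper: a priori bounds from the $A/B$ splitting and the orthonormal Gagliardo--Nirenberg/Lieb--Thirring estimate, IMS localization, the geometric localization of \cite{ge} with $e_2^*<2e_1^*$ and the projector identity \eqref{2.59}, then lower semicontinuity and convergence of the kinetic energies.

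The genuine difference is how you exclude vanishing. Both proofs reduce this to showing that vanishing with bounded kinetic energy forces $\int\rho_{\gamma_n}\log\rho_{\gamma_n}dx\to-\infty$, which contradicts the a priori bounds.
\begin{itemize}
\item The paper compares with the power law. By \eqref{2.8a}, $\int\rho\log\rho\,dx\le\big(\int\rho^p\,dx-2\big)/(p-1)$, and $\int\rho_{\gamma_n}^p\to0$ under vanishing for each fixed $p$. It phrases this through \eqref{3.1}, i.e.\ through $\lim_{p\searrow1}e_2(p)=e_2^*$ from Theorem \ref{thm1}(1), although \eqref{2.8a} alone suffices.
\item Your unit-cube Jensen argument gives instead the quantitative bound $\int\rho_\gamma\log\rho_\gamma\,dx\le 2\log\max_Q m_Q+C\big(1+\Tr(-\Delta\gamma)\big)$. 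For this you need the local inequality in an additive form that can be summed over cubes, e.g.\ $\int_Q f\log f\le C\big(1+\int_Q|\nabla\sqrt f|^2\big)$ for $\int_Q f=1$. This follows from $(s\log s)_+\le Cs^{1+2/d}$ and Gagliardo--Nirenberg on the cube, combined with \eqref{ho}.
\end{itemize}
The paper's route is shorter given the power-law machinery. Yours is self-contained and never uses the $p\searrow1$ analysis.

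The dilation detour is unnecessary. Boundedness of $\int\rho_{\gamma_n}\log\rho_{\gamma_n}$ already follows from your a priori bounds. Moreover, the scaling identity shows that any minimizing sequence automatically satisfies $\Tr(-\Delta\gamma_n)\to d$. So no rescaling is needed, and the lemma, which allows only translations, would not permit one anyway.

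One point in the dichotomy step needs care. Since $\chi_{R_n}^2+\eta_{R_n}^2=1$, the nonlinear cross term equals
$$\int\rho_{\gamma_n}\big(-\chi_{R_n}^2\log\chi_{R_n}^2-\eta_{R_n}^2\log\eta_{R_n}^2\big)dx\le\log 2\int_{R_n\le|x|\le 2R_n}\rho_{\gamma_n}dx\to0.$$
So apply your elementary bound to the cutoff weights, not to $\rho_{\gamma_n}$. Bounding $|\rho_{\gamma_n}\log\rho_{\gamma_n}|\le C(\rho_{\gamma_n}^{1/2}+\rho_{\gamma_n}^q)$ on the annulus would not work. The term $\int_{R_n\le|x|\le2R_n}\rho_{\gamma_n}^{1/2}dx$ need not tend to zero, because the annulus has volume of order $R_n^d$.
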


\noindent \textbf{Proof.}
One can derive from \eqref{b1}--\eqref{b} that
\begin{align*}
e^*_2=&\text{Tr}(-\Delta\gamma_n) -\inte\rho_{\gamma_n}\text{log}\rho_{\gamma_n}dx+o(1)\\
=&\text{Tr}(-\Delta\gamma_n)-\inte B(\sqrt{\rho_{\gamma_n}})dx+\inte A(\sqrt{\rho_{\gamma_n}})dx+o(1)\\
\geq&\text{Tr}(-\Delta\gamma_n)-C(q)\inte \rho_{\gamma_n}^{q}dx+o(1)\\
\geq&\text{Tr}(-\Delta\gamma_n)-C(q)\big(K(d,q)\text{Tr}(-\Delta\gamma_n)\big)^{\frac{d(q-1)}{2}}+o(1)\ \ \ \text{as}\ \ n\to\infty,
\end{align*}
where we have used  (cf. \cite[Remark 7]{i}) the following Gagliardo-Nirenberg-Sobolev inequality for orthonormal systems
\begin{eqnarray}\label{gn}
\text{Tr}(-\Delta\gamma)\geq K(d,q)\Big(\int_{\R^d}\rho_\gamma^{q}dx\Big)^{\frac{2}{d(q-1)}},\ \   \forall\ \gamma\in\mathcal{P}_2.
\end{eqnarray}
This then yields that the sequence $\{\text{Tr}(-\Delta\gamma_n)\}$ is  bounded uniformly, and hence the sequence $\big\{\inte\rho_{\gamma_n}\text{log}\rho_{\gamma_n}dx\big\}$ is also bounded uniformly.

We first claim that
\begin{equation}\label{M3.18}
\text{the vanishing case of $\big\{\sqrt{\rho_{\gamma_n}}\big\}$ does not occur}.
\end{equation}
In fact,  since the sequences $\{\text{Tr}(-\Delta\gamma_n)\}$ and  $\big\{\inte\rho_{\gamma_n}\text{log}\rho_{\gamma_n}dx\big\}$ are bounded uniformly, we  deduce  from \eqref{2.8a} and \eqref{3.6a} that there exists a subsequence, still denoted by $\{\gamma_n\}$, of $\{\gamma_n\}$ such that
\begin{align*}
e^*_2=\lim\limits_{p\searrow1}e_2(p)
\leq&\lim\limits_{p\searrow1}\lim\limits_{n\to\infty}\Big[\text{Tr}(-\Delta\gamma_n)-\frac{1}{p(p-1)}\inte(\rho_{\gamma_n}^p-\rho_{\gamma_n})dx\Big]\nonumber\\
\leq&\lim\limits_{p\searrow1}\lim\limits_{n\to\infty}\Big[\text{Tr}(-\Delta\gamma_n)-\frac{1}{p}\inte\rho_{\gamma_n}\text{log}\rho_{\gamma_n}dx\Big]\\
=&\lim\limits_{n\to\infty}\Big[\text{Tr}(-\Delta\gamma_n)-\inte\rho_{\gamma_n}\text{log}\rho_{\gamma_n}dx\Big]=e^*_2,\nonumber
\end{align*}
which implies that
\begin{align}\label{3.1}
\lim\limits_{n\to\infty}\inte\rho_{\gamma_n}\text{log}\rho_{\gamma_n}dx=\lim\limits_{p\searrow1}\lim\limits_{n\to\infty}\frac{1}{p(p-1)}\inte(\rho_{\gamma_n}^p-\rho_{\gamma_n})dx.
\end{align}
If the vanishing occurs for the sequence $\{\sqrt{\rho_{\gamma_n}}\}$, then we have $\lim\limits_{n\to\infty}\inte\rho_{\gamma_n}^rdx=0$ for any $r\in(1, 2^*/2)$. This thus implies that
$$
\lim\limits_{p\searrow1}\lim\limits_{n\to\infty}\frac{1}{p(p-1)}\inte(\rho_{\gamma_n}^p-\rho_{\gamma_n})dx=-\infty,
$$
which however contradicts with \eqref{3.1}, in view of the fact that $\sup\limits_{n}\big|\inte\rho_{\gamma_n}\text{log}\rho_{\gamma_n}dx\big|<\infty$. This proves the claim (\ref{M3.18}).

Similar to \eqref{2.8} and \eqref{17},  since  $\big\{\sqrt{\rho_{\gamma_n}}=\sqrt{|u_{1n}|^2+|u_{2n}|^2}\big\}$ is non-vanishing and the problem $e_2^*$ is  translationally invariant, without loss of generality, we can assume that there exist $(u_1, u_2)\in \big(H^1(\R^d,\C)\big)^2\backslash\{(0, 0)\}$ and  a sequence $\{R_n\}\subset\R$ satisfying $R_n\rightarrow\infty$ as $n\to\infty$ such that up to a subsequence  if necessary,
\begin{eqnarray}\label{3.7}
u_{in}\rightharpoonup  u_i\ \ \mathrm{weakly\ in}\, \ H^1(\R^d,\C)\ \ \text{as}\ \ n\to\infty,\ \   i=1,2,
\end{eqnarray}
and
\begin{equation}\label{3.8}
0<\lim\limits_{n\to\infty}\int_{|x|\leq R_n}\rho_{\gamma_n}dx=\inte\rho_\gamma dx,\ \  \lim\limits_{n\to\infty}\int_{R_n\leq|x|\leq 6R_n}\rho_{\gamma_n}dx=0,
\end{equation}
where $\gamma:=\sum_{i=1}^2|u_i\rangle\langle u_i|$.
Let $\chi_{R_n}$ and $\eta_{R_n}$ be as in \eqref{2.41a}, where $R_n>0$ is as in \eqref{3.8}.
Since $\text{log}(1+t)\leq t$ holds for any $t\geq0$, one gets from \eqref{3.8} that
\begin{align*}
0\leq&\lim\limits_{n\to\infty}\inte\chi_{R_n}^2\rho_{\gamma_n}\Big[\text{log}\big(\chi_{R_n}^2\rho_{\gamma_n}+\eta_{R_n}^2\rho_{\gamma_n}\big)-\text{log}\big(\chi_{R_n}^2\rho_{\gamma_n}\big)\Big]dx\\
&+\lim\limits_{n\to\infty}\inte\eta_{R_n}^2\rho_{\gamma_n}\Big[\text{log}\big(\chi_{R_n}^2\rho_{\gamma_n}+\eta_{R_n}^2\rho_{\gamma_n}\big)-\text{log}\big(\eta_{R_n}^2\rho_{\gamma_n}\big)\Big]dx\\
=&\lim\limits_{n\to\infty}\int_{0\leq|x|\leq 2R_n}\chi_{R_n}^2\rho_{\gamma_n}\text{log}\big(1+\eta_{R_n}^2/\chi_{R_n}^2\big)dx\\
&+\lim\limits_{n\to\infty}\int_{R_n\leq|x|<\infty}\eta_{R_n}^2\rho_{\gamma_n}\text{log}\big(1+\chi_{R_n}^2/\eta_{R_n}^2\big)dx\\
\leq&2\lim\limits_{n\to\infty}\int_{R_n\leq|x|\leq 2R_n}\rho_{\gamma_n}dx=0,
\end{align*}
which then  gives that
\begin{align*}
&\inte\rho_{\gamma_n}\text{log}\rho_{\gamma_n}dx\\
=&\inte\big(\chi_{R_n}^2\rho_{\gamma_n}+\eta_{R_n}^2\rho_{\gamma_n}\big)\text{log}\big(\chi_{R_n}^2\rho_{\gamma_n}+\eta_{R_n}^2\rho_{\gamma_n}\big)dx\\
=&\inte\chi_{R_n}^2\rho_{\gamma_n}\text{log}\big(\chi_{R_n}^2\rho_{\gamma_n}\big)dx+
\inte\eta_{R_n}^2\rho_{\gamma_n}\text{log}\big(\eta_{R_n}^2\rho_{\gamma_n}\big)dx+o(1)\ \ \mbox{as}\,\ n\to\infty.
\end{align*}
By the IMS formula,  we then conclude from above that
\begin{align}\label{3.9}
e^*_2=&\text{Tr}(-\Delta\gamma_n) -\inte\rho_{\gamma_n}\text{log}\rho_{\gamma_n}dx+o(1)\nonumber\\
\geq&\Big[\text{Tr}(-\Delta\chi_{R_n}\gamma_n\chi_{R_n}) -\inte\chi_{R_n}^2\rho_{\gamma_n}\text{log}\big(\chi_{R_n}^2\rho_{\gamma_n}\big)dx\Big]\\
&+\Big[\text{Tr}(-\Delta\eta_{R_n}\gamma_n\eta_{R_n}) -\inte\eta_{R_n}^2\rho_{\gamma_n}\text{log}\big(\eta_{R_n}^2\rho_{\gamma_n}\big)dx\Big]+o(1)\ \ \mbox{as}\,\ n\to\infty.\nonumber
\end{align}
 Similar to  \eqref{2.50b},  one can deduce  from \eqref{3.9} that $\inte \rho_\gamma dx=2$,  and hence
\begin{equation}\label{3.10}
\rho_{\gamma_n}\to\rho_\gamma\ \   \text{strongly in}\ \, L^r(\R^d)\ \, \text{as}\ \, n\to\infty, \ \   \forall\ r\in[1, 2^*/2).
\end{equation}

Let $q\in(1, 1+2/d)$. Since there exists  a constant $C(q)>0$ such that $0\leq t\text{log}t\leq C(q)t^q$ holds for any $t>1$, one can  verify from \eqref{3.10} that
\begin{align}\label{3.11}
\infty>\lim\limits_{n\to\infty}\int_{\rho_{\gamma_n}(x)>1}\rho_{\gamma_n}\text{log}\rho_{\gamma_n}dx=\int_{\rho_\gamma(x)\geq1}\rho_{\gamma}\text{log}\rho_{\gamma}dx\geq0,
\end{align}
which then yields from Fatou's Lemma that
\begin{equation}\label{3.12}
\begin{split}
\infty>&\liminf\limits_{n\to\infty}\Big[e^*_2-\text{Tr}(-\Delta\gamma_n) +\int_{\rho_{\gamma_n}(x)>1}\rho_{\gamma_n}\text{log}\rho_{\gamma_n}dx\Big]\\
=&\liminf\limits_{n\to\infty}\int_{\rho_{\gamma_n}(x)\leq1}\rho_{\gamma_n}\big|\text{log}\rho_{\gamma_n}\big|dx\geq \int_{\rho_\gamma(x)\leq1}\rho_\gamma\big|\text{log}\rho_\gamma\big| dx\geq0.
\end{split}\end{equation}
It thus follows from \eqref{3.10}--\eqref{3.12} that  
\begin{align*}
\mathcal{E}(\gamma)\geq e_2^*=&\liminf\limits_{n\to\infty}\Big[\text{Tr}(-\Delta\gamma_n) -\int_{\rho_{\gamma_n}(x)\leq1}\rho_{\gamma_n}\text{log}\rho_{\gamma_n}dx-\int_{\rho_{\gamma_n}(x)>1}\rho_{\gamma_n}\text{log}\rho_{\gamma_n}dx\Big]\\
\geq&\text{Tr}(-\Delta\gamma)-\inte\rho_\gamma\text{log}\rho_\gamma dx=\mathcal{E}(\gamma).
\end{align*}
We therefore obtain that the convergence \eqref{3.18} holds true,  and we are done. \qed

\vspace{0.15cm}

Let $\gamma=\sum_{i=1}^2|u_i\rangle\langle u_i|$ be a minimizer of $e_2^*$.  To complete the proof of Theorem \ref{thm2}, we need to consider the corresponding Euler-Lagrange equation of $(u_1, u_2)$.
In fact, the definition of $e_2^*$ gives that
\begin{align*}
e^*_2=&\inf\Big\{\mathcal{E}(u_1, u_2):\,  u_i\in H^1(\R^d,\C),\ \langle u_i,u_j\rangle_{L^2}=\delta_{ij},\  i,j=1,2\Big\},
\end{align*}
where the energy functional $\mathcal{E}(u_1, u_2)$ satisfies
\begin{align}\label{fun}
\mathcal{E}(u_1, u_2):=\inte\big(|\nabla u_1|^2+|\nabla u_2|^2\big)dx-\inte \big(|u_1|^2+|u_2|^2\big)\text{log}\big(|u_1|^2+|u_2|^2\big)dx.
\end{align}
Since  $\inte \big(|\varphi|^2\text{log}|\varphi|^2\big)_+ dx<\infty$ holds for any $\varphi\in H^1(\R^d,\C)$, and it holds that 
\begin{align}\label{3.27}
\sum_{i=1}^2|u_i|^2 \text{log}|u_i|^2\leq&\big(|u_1|^2+|u_2|^2\big)\text{log}\big(|u_1|^2+|u_2|^2\big)\\
=&\sum_{i=1}^2|u_i|^2 \text{log}|u_i|^2
+|u_1|^2 \text{log}\Big(1+\frac{|u_2|^2}{|u_1|^2}\Big)
+|u_2|^2 \text{log}\Big(1+\frac{|u_1|^2}{|u_2|^2}\Big)\nonumber\\
\leq&\sum_{i=1}^2|u_i|^2 \text{log}|u_i|^2+\sum_{i=1}^2|u_i|^2\ \ \, \text{in}\ \, \R^d, \nonumber
\end{align}
one can  deduce  that  for any $\varphi\in H^1(\R^d,\C)$,
$$\inte \big(|u_1|^2+|u_2|^2\big)\Big|\text{log}\big(|u_1|^2+|u_2|^2\big)\Big|dx<\infty,$$
if and only if
$$\sum_{i=1}^2\inte |u_i|^2 \Big|\text{log}|u_i|^2\Big|dx<\infty.
$$

In order to ensure that  the energy functional $\mathcal{E}(u_1, u_2)$ is well-defined, one can observe from above that  the energy functional $\mathcal{E}(u_1, u_2)$ should be restricted to the set $\mathcal{W}\times \mathcal{W}$, where 
\begin{equation*}\label{w}
	\mathcal{W}:=\big\{u\in H^1(\R^d,\C): \inte |u|^2\big|\text{log}|u|^2\big|dx<\infty\big\},\ \  d\geq1.
\end{equation*}
Recall from \cite{regu1}  and  \cite{1983NA} that $\mathcal{W}$ is a reflexive Banach space  equipped  with the norm
\begin{align}\label{norm}
\|u\|_{\mathcal{W}}=\|u\|_{H^1}+\|u\|_{\mathcal{V}},
\end{align}
where the set
\begin{equation*}\label{v}
\mathcal{V}:=\Big\{u\in L^1_{\text{loc}}(\R^d,\C):\,  A(|u|)\in L^1(\R^d)\Big\}
\end{equation*}
is a reflexive Banach space equipped with the norm
\begin{align*}
\|u\|_{\mathcal{V}}=\inf\Big\{k>0:\ \inte A\big(k^{-1}|u|\big)dx\leq1\Big\},
\end{align*}
and the function $A$ is as in \eqref{a}.


\begin{lem}\label{lemc1}
Let  the functional $\mathcal{E}:\, \mathcal{W}\times \mathcal{W}\to \R$ be defined by  \eqref{fun}. Then we have $\mathcal{E}\in C^1(\mathcal{W}\times \mathcal{W}, \, \R)$. 
\end{lem}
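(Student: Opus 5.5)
We use the splitting \eqref{b1}: with $\rho=|u_1|^2+|u_2|^2$ and $t=\sqrt\rho$,
\[
-\rho\log\rho=A(\sqrt{\rho})-B(\sqrt{\rho}),
\]
so that
\[
\mathcal{E}(u_1,u_2)=\sum_{i=1}^2\inte|\nabla u_i|^2dx+\inte A\big(\sqrt{\rho}\big)dx-\inte B\big(\sqrt{\rho}\big)dx .
\]
This is the standard decomposition of Cazenave \cite{1983NA} for the scalar logarithmic functional. We prove that each of the three pieces is $C^1$ on $\mathcal{W}\times\mathcal{W}$. The only new feature compared with the scalar case is that $A$ and $B$ are composed with $\sqrt{|u_1|^2+|u_2|^2}$ rather than with $|u|$. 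Setting $\tilde A(z):=A(|z|)$ and $\tilde B(z):=B(|z|)$ for $z=(z_1,z_2)\in\C^2$, we have $A(\sqrt\rho)=\tilde A(u_1,u_2)$, i.e.\ $A$ is composed with the Euclidean norm on $\C^2\cong\R^4$. The kinetic term is a continuous quadratic form on $H^1$, hence smooth; it remains to treat the $B$- and $A$-terms.

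For the $B$-term, one checks from \eqref{b1}--\eqref{a} that $B\in C^1([0,\infty))$ with $B(0)=B'(0)=0$. In addition, $|B'(t)|\le C(t^{2q_1-1}+t^{2q_2-1})$ for some exponents $1<q_1\le q_2<1+2/d$ (for $t\le e^{-3}$ one has $B\equiv0$, and for large $t$ the term $t\log t^2$ dominates). Hence $\tilde B\in C^1(\C^2)$ with $|\nabla\tilde B(z)|\le C(|z|^{2q_1-1}+|z|^{2q_2-1})$. By the Sobolev embedding $H^1\hookrightarrow L^{2q}$ for $q<1+2/d\le 2^*/2$, the standard Nemytskii argument (dominated convergence plus continuity of the superposition operator $L^{2q}\to L^{2q/(2q-1)}$) gives that $(u_1,u_2)\mapsto\inte\tilde B(u_1,u_2)dx$ is $C^1$ on $H^1\times H^1$. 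Its derivative is
\[
\langle D\,\cdot\,,(v_1,v_2)\rangle=\mathrm{Re}\inte\frac{B'(\sqrt\rho)}{\sqrt\rho}\,(u_1\bar v_1+u_2\bar v_2)\,dx ,
\]
and a fortiori it is $C^1$ on $\mathcal{W}\times\mathcal{W}$, since $\|\cdot\|_{\mathcal W}\ge\|\cdot\|_{H^1}$.

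The $A$-term is the main obstacle, because $A(t)\sim -t^2\log t^2$ near $0$ has no polynomial control; this is why the Orlicz space $\mathcal{V}$ enters. We proceed in three steps.
\begin{enumerate}
\item \emph{Finiteness.} Since $A$ is increasing and convex with $A(0)=0$, and $\sqrt\rho\le|u_1|+|u_2|$, we get $A(\sqrt\rho)\le\frac12\big(A(2|u_1|)+A(2|u_2|)\big)$. Moreover $A$ satisfies the $\Delta_2$-condition $A(2t)\le CA(t)$, as is clear from its explicit form. Hence $\inte A(\sqrt\rho)dx<\infty$ on $\mathcal{V}\times\mathcal{V}$.
\item \emph{G\^ateaux differentiability.} The map $\tilde A$ is convex and $C^1$ on $\C^2$ (convex increasing $A$ composed with the norm; at $z=0$ one uses $A'(0)=0$). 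We also have $|\nabla\tilde A(z)|=A'(|z|)$ and $A'(t)t\le CA(t)$, again from the explicit form. Convexity gives, for $|s|\le 1$,
\[
\big|\tilde A(u+sv)-\tilde A(u)\big|/|s|\le \tilde A(u+v)+\tilde A(u-v)+2\tilde A(u).
\]
This bound is integrable by Step 1, so dominated convergence yields G\^ateaux differentiability with derivative
\[
\mathrm{Re}\inte \frac{A'(\sqrt\rho)}{\sqrt\rho}\,(u_1\bar v_1+u_2\bar v_2)\,dx .
\]
\item \emph{Continuity of the derivative.} We show that $u\mapsto A'(|u|)$ is continuous from $\mathcal{V}\times\mathcal{V}$ into the dual Orlicz space $L^{A^*}$, where $A^*$ is the Young conjugate of $A$. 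This follows from $A^*(A'(t))\le A'(t)t\le CA(t)$, the $\Delta_2$-property of $A$ and $A^*$, and Vitali's theorem applied to a.e.\ convergent subsequences. The Orlicz H\"older inequality then gives continuity of the G\^ateaux derivative from $\mathcal{V}^2$ into $(\mathcal{V}^2)^*$, hence Fr\'echet $C^1$.
\end{enumerate}
If verifying the $\Delta_2$-condition for $A^*$ becomes delicate, the fallback is to follow the scalar proof of \cite[Lemma 2.2]{1983NA} verbatim, replacing $|u|$ by $|(u_1,u_2)|$ throughout, since all the pointwise inequalities used there hold for the norm on $\C^2$. Combining the three pieces gives $\mathcal{E}\in C^1(\mathcal{W}\times\mathcal{W},\R)$.
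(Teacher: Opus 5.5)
Your proposal is correct and follows essentially the same route as the paper: the splitting $-\rho\log\rho=A(\sqrt{\rho})-B(\sqrt{\rho})$, with the $B$-part controlled through $H^1$ and the $A$-part through the Orlicz space $\mathcal{V}$ via a pointwise bound $A(\sqrt{\rho})\le C\big(A(|u_1|)+A(|u_2|)\big)$, adapting Cazenave's scalar argument. The paper proves continuity of $\mathcal{E}$ explicitly and only cites \cite{1983NA,regu1} for G\^ateaux differentiability and continuity of the derivative, whereas you supply those steps yourself (convexity domination, and continuity of the derivative via the $\Delta_2$ property of $A^*$); in Step 3 you should run the Vitali argument on the vector map $u\mapsto A'(|u|)u/|u|$ rather than on $A'(|u|)$ alone, which works the same way.
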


\noindent \textbf{Proof.}  We first prove the claim that $\mathcal{E}\in C(\mathcal{W}\times \mathcal{W}, \R)$.
To prove this claim, we suppose  that
\begin{align*}
(u_{1n}, u_{2n}) \to (u_1, u_2)\ \ \text{strongly in}\,\ \mathcal{W}\times \mathcal{W}\,\ \text{as} \,\ n\to\infty.
\end{align*}
By  Sobolev's  embedding  theorem, we then obtain from \eqref{ho} and \eqref{norm} that the sequence $\big\{\sqrt{|u_{1n}|^2 + |u_{2n}|^2}\, \big\}$ is  bounded uniformly  in $H^1(\R^d)$,
\begin{align}\label{3.31}
\sqrt{|u_{1n}|^2 + |u_{2n}|^2}\to \sqrt{|u_1|^2 + |u_2|^2}\ \ \ \text{strongly in} \,\ L^r(\R^d)\ \ \text{as} \,\ n\to\infty, \ \ r\in[2, 2^*),
\end{align}
and
\begin{align}\label{3.32}
u_{in}\to u_i\ \ \ \text{strongly in} \ \mathcal{V}\,\ \text{as} \,\ n\to\infty,\ \ i=1,2.
\end{align}
Recall  that
\begin{equation}\label{3.33}
\begin{split}
&-(|u_{1n}|^2 + |u_{2n}|^2)\log(|u_{1n}|^2 + |u_{2n}|^2)\\
 =& A\big(\sqrt{|u_{1n}|^2 + |u_{2n}|^2}\big) - B\big(\sqrt{|u_{1n}|^2 + |u_{2n}|^2}\big),
\end{split}
\end{equation}
where the functions $A(\cdot)$ and $B(\cdot)$ are as in \eqref{b1} and \eqref{a}. Employing \cite[Equation (4.30)]{regu1} and \cite[Equation (1.2)]{1983NA}, it yields from \eqref{3.31} that
\begin{align}\label{3.35}
\lim\limits_{n\to\infty}\inte \Big|B\big(\sqrt{|u_{1n}|^2 + |u_{2n}|^2}\big)- B\big(\sqrt{|u_1|^2 + |u_2|^2}\big)\Big|dx=0.
\end{align}
Applying \cite[Proposition 6.1 (i)]{regu1} and \cite[Equations (1.5) and (2.3)]{1983NA}, since the function $A\geq 0$ is  convex and increasing on $[0, \infty)$, one can deduce  from \eqref{3.32} that
$$
0\leq A\big(\sqrt{|u_{1n}|^2 + |u_{2n}|^2}\big) \leq A\big(|u_{1n}| + |u_{2n}|\big) \leq 2A(|u_{1n}|) +2A(|u_{2n}|),
$$
and
$$
A(|u_{1n}|) +A(|u_{2n}|) \to A(|u_{1}|) +A(|u_{2}|)  \ \ \text{strongly in} \ L^1(\R^d) \,\ \text{as} \,\ n\to\infty.
$$
These then yield that
\begin{align}\label{3.36}
\lim\limits_{n\to\infty}\inte \Big|A\big(\sqrt{|u_{1n}|^2 + |u_{2n}|^2}\big)- A\big(\sqrt{|u_1|^2 + |u_2|^2}\big)\Big|dx=0.
\end{align}
We thus conclude from  \eqref{3.33}--\eqref{3.36} that the claim
$\mathcal{E}\in C(\mathcal{W}\times \mathcal{W},\R)$ holds true.

Moreover, we can regard \(\mathcal W\times\mathcal W\) as a real Banach space by
restricting its scalar field from \(\mathbb C\) to \(\mathbb R\), where the
  norm and completeness of \(\mathcal W\times\mathcal W\) are not changed.
Following the similar argument of \cite[Proposition 2.7]{1983NA} and
\cite[Proposition 6.3]{regu1}, it then yields that \(\mathcal E\) is G\^{a}teaux differentiable, in the sense that
\begin{align*}
	D\mathcal E(u_1,u_2)[h_1,h_2]
	={}&
	2\operatorname{Re}\sum_{i=1}^2
	\int_{\mathbb R^d}
	\nabla u_i\cdot\nabla\overline{h}_i\,dx
	\nonumber\\
	&-
	2\operatorname{Re}\sum_{i=1}^2
	\int_{\mathbb R^d}
	\bigl(1+\log(|u_1|^2+|u_2|^2)\bigr)
	u_i\overline{h}_i\,dx,
\end{align*}
and the map
\[
(u_1,u_2)\longmapsto D\mathcal E(u_1,u_2)\in(\mathcal W\times\mathcal W)^*
\]
is continuous from \(\mathcal W\times\mathcal W\) into its real
dual space, where the term $ u_i\log(|u_1|^2+|u_2|^2) $ ($i=1, 2$)
is defined to be zero on the set \(\{x\in \R^d:\, |u_1(x)|^2+|u_2(x)|^2=0\}\).  Hence the G\^{a}teaux derivative is the Fr\'echet derivative, which then yields that $\mathcal E\in
C^1(\mathcal W\times\mathcal W,\mathbb R).$
This completes the proof of Lemma~\ref{lemc1}. \qed

\vspace{0.1 cm}
Following Lemma~\ref{lemc1}, we next address the proof of Theorem \ref{thm2}.
\vspace{0.1 cm}

\noindent \textbf{Proof of Theorem \ref{thm2}.}  Since (1) of Theorem \ref{thm2} follows directly from Lemma \ref{lem3.1}, the rest is to prove (2) of Theorem \ref{thm2}.

Let $\gamma=\sum_{i=1}^2|u_i\rangle \langle u_i|$ be a minimizer of $e_2^*$. Since $\mathcal E\in C^1(\mathcal W\times\mathcal W,\, \R)$, the variational theory yields that up to a unitary transformation of  $(u_1,u_2)$, there exists $(\mu_1, \mu_2)$ satisfying $\mu_1\leq\mu_2$ such that
 \begin{align}\label{3.35b}
 \big[-\Delta -\log(|u_1|^2+|u_2|^2)\big]u_i=\mu_iu_i
 \ \ \text{in }\, \R^d,\ \,  i=1,2.
 \end{align}
Similar to \eqref{3.6b} and \eqref{3.8a},
one can further verify  from \eqref{3.35b} that
\begin{align}\label{3.35a}
u_i\in C^2(\R^d,\C),\ \   |u_i(x)|\leq C e^{-\theta|x|}\ \,\ \text{in}\,\ \R^d,\ \   i=1,2,
\end{align}
and
\begin{align}\label{3.36a}
-\Delta \big(|\tilde u_1|+|\tilde u_2|\big)\leq \big(|\tilde u_1|+|\tilde u_2|\big) \text{log} \big(|\tilde u_1|+|\tilde u_2|\big) ^2\ \   \text{in}\ \, \R^d,
\end{align}
where $\theta>0,\ C>0$ are independent of $x\in\R^d$, and $\tilde u_i:=a_0 u_i$ holds for some $a_0>e^{\mu_2/2}$. Applying the  comparison argument (cf. \cite[Theorem 1.4 (2)]{zfl}), we then  deduce  from \eqref{3.35a} and \eqref{3.36a} that there exist constants $\theta_0>0$ and $C_0>0$ such that
$$
|\tilde u_1(x)|+|\tilde u_2(x)|\leq C_0 e^{-\theta_0|x|^2}\ \ \text{in}\ \ \R^d.
$$

For simplicity, we define
\[f_+:=\max\{f,\, 0\},\ \ f_-:=\max\{-f,\, 0\}.\]
Following the following three steps, it next suffices to prove that $(u_1, u_2)$ is a ground state of the system \eqref{3.35b} in the sense of Definition \ref{dfn:1}.
To complete the proof of Theorem \ref{thm2}, it remains to show that $(u_1,u_2)$ is a ground state of the system \eqref{3.35b} in the sense of Definition \ref{dfn:1}. We next prove this result  in the following three steps.

$Step\ 1.$
Denote
$$
D(\mathfrak q_\gamma)
:=\big\{\varphi\in H_0^1(\Omega_\gamma,\mathbb C):\, (V_\gamma)_+^{1/2}\varphi\in L^2(\Omega_\gamma,\mathbb C)\big\},
$$
and define the symmetric sesquilinear form
$\mathfrak q_\gamma:D(\mathfrak q_\gamma)\times D(\mathfrak q_\gamma)\to\mathbb{C}$ by
\begin{equation*}\label{xj1}
	\mathfrak q_\gamma[\varphi,\, \psi]
	:
	=\int_{\Omega_\gamma}\nabla\overline{\varphi}\cdot\nabla\psi\,dx+\int_{\Omega_\gamma}V_\gamma\overline{\varphi}\psi\,dx,
\end{equation*}
where \[
  V_\gamma:=-\log\rho_\gamma=-\log(|u_1|^2+|u_2|^2)\ \,\ \hbox{in}\,\ \Omega_\gamma:=\big\{x\in\R^d:\ \rho_\gamma(x)>0\big\}.
\]
Note from \eqref{3.35a} that
\begin{equation}\label{v8}
\Omega_\gamma\ \ \text{is open}, \ \   V_\gamma\in L^\infty_{\mathrm{loc}}(\Omega_\gamma), \ \
	(V_\gamma)_-\in L^\infty(\Omega_\gamma).
\end{equation}
Let
\begin{align}\label{3.70}
	\|\varphi\|_{\mathfrak q}^2 := \mathfrak q_\gamma[\varphi, \, \varphi] + (M_\gamma+1)\|\varphi\|_{L^2(\Omega_\gamma)}^2,\   \ \text{where}\ \  M_\gamma:= \|(V_\gamma)_-\|_{L^\infty(\Omega_\gamma)}<\infty.
\end{align}
We then have
\begin{align}\label{3.70a}
\text{the norm}\  \|\varphi\|_{\mathfrak q}^2\ \text{is equivalent  to}\ \|\varphi\|_{H^1(\Omega_\gamma)}^2+\|(V_\gamma)_+^{1/2}\varphi\|^2_{L^2(\Omega_\gamma)}\ \text{on}\ D(\mathfrak q),
\end{align}
which implies that the space
$D(\mathfrak q_\gamma)$ is complete with respect to
$\|\cdot\|_{\mathfrak q}$.  Thus, one can verify from \eqref{v8} that $\mathfrak q_\gamma$ is densely defined, closed, symmetric and bounded from below. By the first representation theorem, then there exists a unique self-adjoint operator  
\begin{equation}\label{op0}
H_\gamma^D:\
D(H_\gamma^D)\subset L^2(\Omega_\gamma,\mathbb C)
 \rightarrow L^2(\Omega_\gamma,\mathbb C),
\end{equation}
which has a lower bound $-\|(V_\gamma)_-\|_{L^\infty(\Omega_\gamma)}>-\infty$,  such that  for any  $\psi\in D(H_\gamma^D)$ and $\varphi\in D(\mathfrak q_\gamma)$,
\begin{equation}\label{op}
	\mathfrak q_\gamma[\varphi,\psi]
	=\big\langle \varphi,\, H_\gamma^D\psi\big\rangle_{L^2(\Omega_\gamma)}.
\end{equation}
Here  $D(H_\gamma^D)$ denotes the domain of $H_\gamma^D$ satisfying $H_\gamma^D\psi:=f$ and
\begin{align}\label{d}
	D(H_\gamma^D)
	=\Big\{
	\psi\in D(\mathfrak q_\gamma):\,\ &
	\text{there exists }f\in L^2(\Omega_\gamma,\mathbb C)
	\text{ such that}
	\\[-2mm]
	&
	\mathfrak q_\gamma[\varphi,\, \psi]
	=
	\langle \varphi,\,  f\rangle_{L^2(\Omega_\gamma)}\ \text{for all}\ \varphi\in D(\mathfrak q_\gamma)
	\Big\}.
	\nonumber
\end{align}

Note from \eqref{op} that for any  $\psi\in D(H_\gamma^D)$ and
$\varphi\in C_c^\infty(\Omega_\gamma)$,
\begin{align}\label{3.72}
	\langle\varphi,\, H_\gamma^D\psi\rangle_{L^2(\Omega_\gamma)}
	&=
	\int_{\Omega_\gamma}
	\nabla\overline{\varphi}\cdot\nabla\psi\,dx
	+
	\int_{\Omega_\gamma}
	V_\gamma\overline{\varphi}\psi\,dx.
\end{align}
Since $V_\gamma\in L^\infty_{\mathrm{loc}}(\Omega_\gamma)$,
$V_\gamma\psi\in L^1_{\mathrm{loc}}(\Omega_\gamma)$ holds for any
$\psi\in D(H_\gamma^D)\subset H_0^1(\Omega_\gamma)$. This yields from \eqref{3.72} that
\begin{align}\label{di}
	H_\gamma^D\psi
	=
	(-\Delta+V_\gamma)\psi
	\quad\text{in }\ \mathcal D'(\Omega_\gamma),
\end{align}
where $\mathcal D'(\Omega_\gamma)$ denotes the space of the
distributions on $\Omega_\gamma$.
Since the Dirichlet boundary condition is encoded by
$D(\mathfrak q_\gamma)\subset H_0^1(\Omega_\gamma)$,
we conclude from \eqref{di} that $H_\gamma^D$ is  the Dirichlet realization in
$L^2(\Omega_\gamma,\mathbb C)$ of the differential expression
\[
-\Delta+V_\gamma
=
-\Delta-\log\rho_\gamma
\ \ \text{in }\, \Omega_\gamma.
\]

$Step\ 2.$  In this step, we prove that $\mu_1$ and $\mu_2$ are the first two (counted with multiplicity) eigenvalues of the operator $H_\gamma^D$, where  $-\infty<\mu_1\leq\mu_2<\infty$ and $H_\gamma^D$ are given by \eqref{3.35b} and \eqref{op0}, respectively.

We first prove that $H_\gamma^D$ has a compact resolvent.
Indeed, it follows from \eqref{3.35a} that there exist constants \(\delta>0\) and \(C_0>M_\gamma+1>0\) such that
\[
V_\gamma(x)+C_0\geq \delta(1+|x|)\ \   \text{in }\, \Omega_\gamma.
\]
This  yields that for any  $R>0$ and $\varphi\in D(\mathfrak q_\gamma)$,
\begin{align}\label{eq:tail-control}
	\int_{\Omega_\gamma\cap\{x: |x|>R\}}|\varphi|^2\,dx
	&\leq\frac{\delta^{-1}}{1+R}\int_{\Omega_\gamma}(V_\gamma+C_0)|\varphi|^2\,dx
	\leq \frac{C_\gamma}{1+R}\|\varphi\|^2_{\mathfrak q},
\end{align}
where $C_\gamma>0$ depends only on $C_0$, $\delta$ and $M_\gamma$.
Using the zero extension map
$H_0^1(\Omega_\gamma,\C)\hookrightarrow H^1(\mathbb R^d,\C)$ and
the compact embedding $H^1(B_R)\hookrightarrow L^2(B_R)$,     the uniform  estimate \eqref{eq:tail-control} then yields that \begin{align}\label{3.23a}
	D(\mathfrak q_\gamma),\ \text{equipped with  the norm}\ \|\cdot\|_{\mathfrak q},\ \text{ is compactly embedded into}\  L^2(\Omega_\gamma,\C),
\end{align}
where $\|\cdot\|_{\mathfrak q}$ is as in \eqref{3.70}. Since    $H_\gamma^D$   is a self-adjoint operator associated with  $\mathfrak q_\gamma$, we conclude from (\ref{3.23a}) that
$H_\gamma^D$ has a compact resolvent.

We now prove  that 
\begin{equation}\label{3.25}
	H_\gamma^D u_i=\mu_i u_i\ \ \text{in}\ \, L^2(\Omega_\gamma,\C),\ \ u_i\in D(H_\gamma^D),
	\ \ i=1,2.
\end{equation}
Since $u_i\in H^1(\mathbb R^d,\mathbb C)\cap C(\mathbb R^d,\mathbb C)$
and $u_i=0$ on $\Omega_\gamma^c$,   
the quasi-continuous characterization of
$H_0^1(\Omega_\gamma,\C)$  gives that
\[
u_i|_{\Omega_\gamma}\in H_0^1(\Omega_\gamma,\mathbb C),\ \   i=1,2.
\]
In what follows,   we always identify the functions of $H_0^1(\Omega_\gamma)$ with   zero extensions, so that they are defined on $\R^d$.
Since $\gamma$ is a minimizer of $e_2^*$,  one gets from \eqref{3.35a} that
\begin{align*}
	\int_{\Omega_\gamma}(V_\gamma)_+|u_i|^2dx
	&\leq \inte\rho_\gamma (\log \rho_\gamma)_-dx
	=\inte\rho_\gamma (\log \rho_\gamma)_+dx-\inte\rho_\gamma \log \rho_\gamma dx\\
	&\leq  2C_1-\inte\rho_\gamma \log \rho_\gamma dx =2C_1-\text{Tr}(-\Delta\gamma)+e_2^*<\infty.
\end{align*}
This further shows that $\int_{\Omega_\gamma}(V_\gamma)_+|u_i|^2\,dx<\infty$,  and hence  $u_i\in D(\mathfrak q_\gamma) $ for $i=1,2$.
Note from  \eqref{3.35b}  that 
\begin{equation}\label{3.26a}
	\begin{split}
		\mathfrak q_\gamma[\varphi,u_i]
		&=\int_{\Omega_\gamma}\nabla u_i\cdot\nabla\overline{\varphi}\,dx
		+\int_{\Omega_\gamma}V_\gamma u_i\overline{\varphi}\,dx\\
		&=\mu_i\int_{\Omega_\gamma}u_i\overline{\varphi}\,dx=\langle \varphi,\ \mu_iu_i\rangle_{L^2(\Omega_\gamma)},\ \   \forall\ \varphi\in C_c^\infty(\Omega_\gamma,\mathbb C).
	\end{split}
\end{equation}
Since $
(V_\gamma)_+
\in L^\infty_{\mathrm{loc}}(\Omega_\gamma)
\subset L^1_{\mathrm{loc}}(\Omega_\gamma),$
we obtain that $C_c^\infty(\Omega_\gamma,\mathbb C)$
is dense in $ D(\mathfrak q_\gamma)$, which is
equipped with the norm $
\Big(\|\varphi\|_{H^1(\Omega_\gamma)}^2
+\|(V_\gamma)_+^{1/2}\varphi\|_{L^2(\Omega_\gamma)}^2
\Big)^{1/2}$.
This yields from \eqref{3.70a} that  $C_c^\infty(\Omega_\gamma,\mathbb C)$ is dense in $D(\mathfrak q_\gamma)$ with respect to $\|\cdot\|_\mathfrak q$. We therefore deduce  from  \eqref{d} and \eqref{3.26a}  that  \eqref{3.25} holds true.



Since $H_\gamma^D$ is bounded from below and has compact resolvent, we obtain from  \cite[Theorem XIII.64]{modern4}  that the eigenvalues, counted
with multiplicity, of $H_\gamma^D$ can be rearranged as
\[
 \lambda_1(H_\gamma^D)\leq\lambda_2(H_\gamma^D)\leq\cdots\leq \lambda_k(H_\gamma^D)\leq\cdots,\ \   \lim\limits_{k\to\infty}\lambda_k(H_\gamma^D)=\infty.
\]
The min--max principle then gives that if $\mu_1$ and $\mu_2$ are
not the first two eigenvalues of $H_\gamma^D$, then there exist
$\mu_*\in\{\mu_1,\mu_2\}$, together with its corresponding eigenfunction
$u_*\in\{u_1,u_2\}$, and  a normalized eigenfunction $u\in D(H_\gamma^D)$ such that
\[
H_\gamma^D u=\mu u\ \ \ \text{in}\,\ L^2(\Omega_\gamma),\ \   \mu<\mu_*, \ \
u\perp\operatorname{span}\{u_1,u_2\}.
\]
Set $\gamma':=\gamma-|u_*\rangle\langle u_*|+|u\rangle\langle u|\in\mathcal{P}_2.$  Since there exists a constant $C_d>0$, depending only on  $d\in \mathbb{N}^+$, such that
\begin{align*}
	(s\log s)_+
	\leq C_d s^{1+\frac2d}\ \  \text{for any}\,\ s\geq0,
\end{align*}
we deduce from \eqref{lt} that
\begin{align}\label{3.83}
\int_{\Omega_\gamma}\big(\rho_{\gamma'}\log\rho_{\gamma'}\big)_+dx\leq& C_d\inte \rho_{\gamma'}^{1+\frac2d}dx\leq C_dc^{-1}_{LT}(d)\text{Tr}(-\Delta \gamma')<\infty,
\end{align}
where  $c_{LT}(d)>0$ is given by  \eqref{lt} and depends only on  $d\in \mathbb{N}^+$.
Since
\[
s\log\frac{s}{t}-s+t\geq0,\ \  \forall\ s\geq0,\  t>0,
\]
we have
\begin{align*}
\int_{\Omega_\gamma}\rho_{\gamma'}\log\frac{\rho_{\gamma'}}{\rho_\gamma}\,dx
=\int_{\Omega_\gamma}\Big[\rho_{\gamma'}\log\frac{\rho_{\gamma'}}{\rho_\gamma}-\rho_{\gamma'}+\rho_{\gamma}\Big]dx\geq0.
\end{align*}
which further gives that
\begin{align}\label{3.82}
\int_{\Omega_\gamma}\rho_{\gamma'}\log\rho_{\gamma'}dx
\geq\int_{\Omega_\gamma}\rho_{\gamma'}\log\rho_{\gamma}dx=-\int_{\Omega_\gamma}\rho_{\gamma'}V_\gamma dx
>-\infty,
\end{align}
due to the facts that $(V_\gamma )_-\in L^\infty(\Omega_\gamma)$ and $u_i, u\in D(\mathfrak q_\gamma)$.
We thus conclude  from \eqref{3.83} and \eqref{3.82} that   $-\infty<\int_{\Omega_\gamma}\rho_{\gamma'}\log\rho_{\gamma'}dx<\infty$.
By the convexity of  the map $t\mapsto t\log t$, we hence obtain that
\begin{align}\label{3.76}
e_2^*\leq&\mathcal{E}(\gamma')
=\mathcal{E}(\gamma)+\text{Tr}(-\Delta \gamma')-\text{Tr}(-\Delta \gamma)+\int_{\Omega_\gamma}\rho_\gamma\log\rho_\gamma dx-\int_{\Omega_\gamma}\rho_{\gamma'}\log\rho_{\gamma'} dx\nonumber\\
\leq&\mathcal{E}(\gamma)+\text{Tr}(-\Delta \gamma')-\text{Tr}(-\Delta \gamma)-\int_{\Omega_\gamma}\big(1+\log\rho_{\gamma} \big)\big(\rho_{\gamma'} -\rho_{\gamma} \big)dx\\
=&e_2^*-\int_{\Omega_\gamma}|\nabla u_*|^2dx+\int_{\Omega_\gamma}|\nabla u|^2dx
+\int_{\Omega_\gamma} \big(|u_*|^2-|u|^2\big)\log \rho_\gamma dx\nonumber\\
	=&e_2^*+(\mu-\mu_*)<e_2^*,\nonumber
\end{align}
a contradiction. This proves  that  $\mu_1$ and $\mu_2$ are the first two (counted with multiplicity) eigenvalues of $H_\gamma^D$, and we are done.

$Step\ 3.$
It remains to prove that the first eigenvalue $\mu_1$ of
$H_\gamma^D$ is simple. On the contrary, suppose
$\dim\ker(H_\gamma^D-\mu_1)\geq2$.
Since $V_\gamma$ is real-valued, the operator $H_\gamma^D$
commutes with the complex conjugation. Thus, we may  deal with the corresponding real eigenspace.
Choose a nonzero real-valued function
$f\in\ker(H_\gamma^D-\mu_1)$. Note from \cite[Section 6.17]{analysis} that
\[\mathfrak q_\gamma\big[|f|,\, |f|\big]=\mathfrak q_\gamma[f,\, f].\]
This yields that  the function $|f|$ also minimizes the Rayleigh quotient and
thus is an eigenfunction corresponding to the first eigenvalue $\mu_1$ of $H_\gamma^D$. Since $\dim\ker(H_\gamma^D-\mu_1)\geq2$, we can choose a  nonzero real-valued function $g\in\ker(H_\gamma^D-\mu_1)$ such that
\[
\big\langle g,\, |f|\big\rangle_{L^2(\Omega_\gamma)}=0.
\]
If $g$ has a fixed sign, then
\[
|f||g|=0
\quad\text{a.e. in }\,\Omega_\gamma.
\]
If $g$ changes the sign, then both $g_+$ and $g_-$ are nonzero. Note that
\begin{equation*}
	\mathfrak q_\gamma[g,g]
	=
	\mathfrak q_\gamma[g_+,g_+]
	+
	\mathfrak q_\gamma[g_-,g_-],
	\ \
	\|g\|_2^2
	=
	\|g_+\|_2^2+\|g_-\|_2^2.
\end{equation*}
The variational characterization then gives that
\[
\mathfrak q_\gamma[g_\pm,g_\pm]
=
\mu_1\|g_\pm\|_2^2,
\]
which further implies that both  $g_+$ and $g_-$ are the eigenfunctions corresponding to the first eigenvalue $\mu_1$ of $H_\gamma^D$.
Consequently,  there exist two normalized nonnegative
eigenfunctions $\phi_1$ and $\phi_2$, corresponding to the first eigenvalue $\mu_1$ of $H_\gamma^D$, such that
\begin{equation}\label{3.disjoint-groundstates}
	\phi_1\phi_2=0
	\quad\text{a.e. in }\, \Omega_\gamma.
\end{equation}

Denote \[
\widetilde\gamma
:=
|\phi_1\rangle\langle\phi_1|
+
|\phi_2\rangle\langle\phi_2|
\in\mathcal P_2.
\]
Since $\mu_1\leq\mu_2$ are the first two (counted with multiplicity) eigenvalues of
$H_\gamma^D$, we deduce that $\mu_1=\mu_2$, due to the fact that
$\dim\ker(H_\gamma^D-\mu_1)\geq2$ . 
The same argument of \eqref{3.76} then gives that
\begin{align}\label{3.78}
e_2^*\leq\mathcal E(\widetilde\gamma)
&\leq\mathcal E(\gamma)+\text{Tr}(-\Delta\widetilde\gamma)\nonumber\\
&\quad -\int_{\Omega_\gamma}\rho_{\widetilde\gamma}\text{log}\rho_{\gamma}dx-\text{Tr}(-\Delta\gamma)+
\int_{\Omega_\gamma}\rho_{\gamma}\text{log}\rho_{\gamma}dx\\
&=e_2^*+2\mu_1-2\mu_1=e_2^*.\nonumber
\end{align}
Moreover, it follows from \eqref{3.disjoint-groundstates} that
\[
\rho_{\widetilde\gamma}\log\rho_{\widetilde\gamma}
=
|\phi_1|^2\log|\phi_1|^2
+
|\phi_2|^2\log|\phi_2|^2
\quad\text{a.e. in }\, \mathbb R^d.
\]
We thus conclude from  \eqref{3.78} that
\begin{align*}
e_2^*=\mathcal E(\widetilde\gamma)
&=\sum_{j=1}^2\left[\int_{\mathbb R^d}|\nabla\phi_j|^2\,dx
-\int_{\mathbb R^d}|\phi_j|^2\log|\phi_j|^2\,dx\right]
\geq2e_1^*,
\end{align*}
where $e_1^*>0$ is given by \eqref{2.4b}.
This however contradicts  with Lemma \ref{lemstri}. Hence,
 the first eigenvalue $\mu_1$ of
$H_\gamma^D$ is simple, which proves Step 3. This therefore completes the proof  of Theorem \ref{thm2}.
\qed

\subsection{Proof of Theorem \ref{thmlog}}\label{subsec3.1}
This subsection is devoted to the proof of Theorem  \ref{thmlog},  which concerns the sharp logarithmic Sobolev inequality  \eqref{logsobo} for orthonormal functions. 

We define
\begin{align}\label{k2}
	\mathcal{K}_2
	:=
	\big\{
	\gamma:\,
	0\leq \gamma=\gamma^*\leq 1,\
	\Tr(\gamma)=2,\
	\Tr(-\Delta\gamma)<\infty
	\big\}.
\end{align}
By the spectral theorem of \cite{i} and the references therein,  for any $\gamma\in\mathcal{K}_2$, there exist an orthonormal basis $\{u_i\}_{i\geq1}$ of $L^2(\R^d, \C)$ and a sequence $\{n_i\}_{i\geq1}\subset[0,1]$ satisfying $\sum_{i\geq1}n_i=2$ such that
\begin{equation}\label{3.16}
	\gamma
	=
	\sum_{i\geq1}
	n_i|u_i\rangle\langle u_i|,
\end{equation}
in the sense that
\begin{equation*}
	(\gamma\varphi)(x)=\sum_{i\geq1}n_iu_i(x)\langle u_i,\, \varphi\rangle_{L^2},\ \ \forall\ \varphi\in L^2(\R^d, \C).
\end{equation*}
We also denote for  $\gamma\in\mathcal{K}_2$,
\begin{align}\label{1.4b}
	\text{Tr}(-\Delta\gamma):= \sum_{i\geq1}\inte n_i|\nabla u_i|^2dx,\  \   \rho_\gamma:=\sum_{i\geq1}n_i|u_i|^2.
\end{align}
We first establish the following decomposition result for the operators in $\mathcal{K}_2$.

\begin{lem}\label{lem-projector-decomposition}
Suppose $\gamma\in\mathcal{K}_2$, where $\mathcal{K}_2$ is given by \eqref{k2} and $\mathcal{P}_2$ is as in \eqref{pl} with $N=2$. Then there exists a measurable family
	$\{\gamma_t\}_{t\in[0,1)}\subset\mathcal{P}_2$ such that
\begin{align}\label{kinetic-decomposition}
	\Tr(-\Delta\gamma)
	=
	\int_0^1\Tr(-\Delta \gamma_t)\,dt,
\end{align}
and
\begin{align}\label{density-decomposition}
	\rho_\gamma(x)
	=
	\int_0^1\rho_{\gamma_t}(x)\,dt\ \ \ \text{a.e. in} \,\  \R^d.
\end{align}
\end{lem}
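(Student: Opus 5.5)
The plan is the classical ``bathtub'' or layer-cake decomposition of a vector $(n_i)$ with $0\le n_i\le 1$ and $\sum_i n_i=2$ into an average of $\{0,1\}$-valued vectors with exactly two ones. Write $\gamma=\sum_{i\ge1}n_i|u_i\rangle\langle u_i|$ as in \eqref{3.16}, discarding indices with $n_i=0$, and set the partial sums $s_0:=0$ and $s_k:=\sum_{i\le k}n_i$, which increase to $2$. Consider the intervals
\[
I_i:=[s_{i-1},\,s_i)\subset[0,2),
\]
each of length $n_i\le1$. These intervals tile $[0,2)$. For $t\in[0,1)$ the two points $t$ and $t+1$ each lie in exactly one interval, say $t\in I_{i(t)}$ and $t+1\in I_{j(t)}$. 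Since every interval has length at most $1$, we get $i(t)\neq j(t)$.

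Define
\[
\gamma_t:=|u_{i(t)}\rangle\langle u_{i(t)}|+|u_{j(t)}\rangle\langle u_{j(t)}|\in\mathcal P_2.
\]
This is a rank-two orthogonal projection built from orthonormal $H^1$ functions. One caveat: $n_i>0$ and $\sum_i n_i\int|\nabla u_i|^2<\infty$ force $u_i\in H^1$ only for those indices with $n_i>0$. This is fine, since the map $t\mapsto(i(t),j(t))$ only selects indices with $n_i>0$. That map is piecewise constant on countably many intervals, so $t\mapsto\gamma_t$ is measurable, and so are $t\mapsto\Tr(-\Delta\gamma_t)$ and $(t,x)\mapsto\rho_{\gamma_t}(x)$.

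The key counting identity is
\[
\bigl|\{t\in[0,1):\, i(t)=i \text{ or } j(t)=i\}\bigr|=n_i .
\]
To see it, note that the map $t\mapsto\{t,t+1\}$ sends $[0,1)$ bijectively onto a cover of $[0,2)$ in which each point $s\in[0,2)$ is hit exactly once (as $t=s$ or as $t+1=s$). Hence the set of $t$ for which $I_i$ contains $t$ or $t+1$ has measure $|I_i\cap[0,1)|+|I_i\cap[1,2)|=n_i$. This set cannot be double counted, because $i(t)\ne j(t)$.

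With the counting identity in hand, Tonelli's theorem applied to nonnegative series gives
\[
\int_0^1\Tr(-\Delta\gamma_t)\,dt=\sum_i\bigl|\{t:\, i\in\{i(t),j(t)\}\}\bigr|\int|\nabla u_i|^2dx=\sum_i n_i\int|\nabla u_i|^2dx=\Tr(-\Delta\gamma),
\]
which is \eqref{kinetic-decomposition}. The same computation pointwise gives
\[
\int_0^1\rho_{\gamma_t}(x)\,dt=\sum_i n_i|u_i(x)|^2=\rho_\gamma(x)
\]
for a.e. $x$, which is \eqref{density-decomposition}. Here $\rho_\gamma$ is defined by \eqref{1.4b}, and the series converges a.e. since $\int\rho_\gamma=2$.

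The only real obstacle is checking $i(t)\ne j(t)$ together with the exact measure count. This is exactly where the constraints $n_i\le1$ and $\sum_i n_i=2$ enter. The case of infinitely many nonzero $n_i$ accumulating at $s=2$ is harmless: every $t\in[0,1)$ still has $t+1<2$ inside some $I_j$. Measurability and the interchange of sum and integral are routine by positivity.
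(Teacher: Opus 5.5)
Your proposal is correct and is essentially the paper's construction. The paper tiles the torus $[0,1)$ by intervals of length $n_i/2$ and pairs $t$ with $t+\frac12 \pmod 1$; this is your tiling of $[0,2)$ by intervals of length $n_i$ with the pairing $t\leftrightarrow t+1$, rescaled by a factor $2$, except that the paper's parametrization counts each pair twice.
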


\noindent \textbf{Proof.} For any $\gamma\in\mathcal{K}_2$, it follows from \eqref{3.16} that there exist an orthonormal system  $\{u_i\}_{i\geq1}$  in $ L^2(\R^d, \C)$ and a sequence $\{q_i\}_{i\geq1}\subset\R$ such that
\begin{equation*}
\gamma=2\sum_{i\geq1}
q_i|u_i\rangle\langle u_i|, \ \   \text{where}\ \,  	0<q_i\leq\frac12\,\ \text{and}\,\  \sum_{i\geq1}q_i=1.
\end{equation*}
We identify $[0,1)$ with the one-dimensional torus $\mathbb{T}:=\R/\mathbb{Z}$, and choose mutually disjoint half-open	 intervals $\{I_i\}_{i\geq1}$ such that
\[
\mathbb{T}
=
\bigcup_{i\geq1}I_i,
\ \
|I_i|=q_i.
\]
Define
\begin{align}\label{3.19}
\tau(t):=t+\frac12 \pmod 1.
\end{align}
Since $|I_i|\leq1/2$, we have
\begin{align}\label{3.20a}
I_i\cap\tau(I_i)=\varnothing\ \ \text{up to a set of measure zero},\ \   i=1, 2, \cdots.
\end{align}
For almost every $t\in[0,1)$, let $i(t)\in \mathbb{N}^+$ be the unique index such that
$t\in I_{i(t)}$, and define
\begin{align}\label{3.20}
\gamma_t:=|u_{i(t)}\rangle\langle u_{i(t)}|+|u_{i(\tau(t))}\rangle\langle u_{i(\tau(t))}|.
\end{align}
By the definition of $i(t)$, we deduce from \eqref{3.20a} that $i(t)\neq i(\tau(t))$  holds for almost every $t\in[0,1)$. This then yields that  the operator $\gamma_t$ is a rank-two orthogonal projection for almost every $t\in[0,1)$, and hence $\gamma_t\in\mathcal{P}_2$ holds for almost every $t\in[0,1)$.

Let $\gamma_t\in\mathcal{P}_2$ be given by \eqref{3.20}. It then follows from \eqref{1.4b} that
\[
\Tr(-\Delta \gamma_t)=\big\langle u_{i(t)}, -\Delta u_{i(t)}\big\rangle+\big\langle u_{i(\tau(t))}, -\Delta u_{i(\tau(t))}\big\rangle,
\]
which further implies that
\begin{align}\label{3.23}
\int_0^1\Tr(-\Delta \gamma_t)\,dt
=&\int_0^1\big\langle u_{i(t)}, -\Delta u_{i(t)}\big\rangle dt
+ \int_0^1\big\langle u_{i(\tau(t))}, -\Delta u_{i(\tau(t))}\big\rangle dt.
\end{align}
Since $i(t)=i$ holds for almost every  $t\in I_i\cap[0, 1)$, we have
\begin{align}\label{3.22}
\int_0^1\big\langle u_{i(t)}, -\Delta u_{i(t)}\big\rangle dt
=&\sum_{i\geq1}\int_{I_i\cap[0, 1)} \big\langle u_{i(t)}, -\Delta u_{i(t)}\big\rangle dt\\
=&\sum_{i\geq1}\int_{I_i\cap[0, 1)} \big\langle u_{i}, -\Delta u_{i}\big\rangle dt
=\sum_{i\geq1}|I_i|\big\langle u_{i}, -\Delta u_{i}\big\rangle.\nonumber
\end{align}
We also deduce from \eqref{3.19} that
\begin{align}\label{3.24}
\int_0^1\big\langle u_{i(\tau(t))}, -\Delta u_{i(\tau(t))}\big\rangle dt
=&\int_0^{1/2}\big\langle u_{i(\tau(t))}, -\Delta u_{i(\tau(t))}\big\rangle dt+\int_{1/2}^1\big\langle u_{i(\tau(t))}, -\Delta u_{i(\tau(t))}\big\rangle dt\nonumber\\
=&\int_{1/2}^1\big\langle u_{i(s)}, -\Delta u_{i(s)}\big\rangle ds+\int_0^{1/2}\big\langle u_{i(s)}, -\Delta u_{i(s)}\big\rangle ds\\
=&\int_0^1\big\langle u_{i(s)}, -\Delta u_{i(s)}\big\rangle ds
=\sum_{i\geq1}|I_i|\big\langle u_{i}, -\Delta u_{i}\big\rangle.\nonumber
\end{align}
We then deduce from \eqref{3.23}--\eqref{3.24} that
\begin{align}\label{3.26}
\int_0^1\Tr(-\Delta \gamma_t)\,dt
=\sum_{i\geq1}2|I_i|\big\langle u_{i}, -\Delta u_{i}\big\rangle
=\sum_{i\geq1}n_i\big\langle u_{i}, -\Delta u_{i}\big\rangle
=\Tr(-\Delta\gamma),
\end{align}
which thus proves \eqref{kinetic-decomposition}. Similar to \eqref{3.26}, one can verify that  for almost every $x\in\R^d$,
\begin{align*}
\int_0^1\rho_{\gamma_t}(x)\,dt
=&\int_0^1|u_{i(t)}(x)|^2dt
+\int_0^1|u_{i(\tau(t))}(x)|^2dt
\\
=&\sum_{i\geq1}2|I_i|\, |u_i(x)|^2
=\sum_{i\geq1}n_i|u_i(x)|^2
=\rho_\gamma(x),
\end{align*}
which proves \eqref{density-decomposition}. This therefore completes the proof of  Lemma \ref{lem-projector-decomposition}.\qed

\vspace{0.1 cm}
Recall that the minimization problem $e_2^*=\inf\limits_{\gamma\in \mathcal{P}_2}\mathcal{E}(\gamma)$  is defined by \eqref{e}.   
The following lemma shows that $e_2^*$ keeps the same as in \eqref{e} if  the constraint   $\mathcal{P}_2$ of \eqref{e}  is replaced by $\mathcal{K}_2$.

\begin{lem}\label{lemrelax}
Let $e_2^*>0$ and  $\mathcal{K}_2$ be defined by \eqref{e} and \eqref{k2}, respectively. Then we have
\begin{align*}
	0<e_2^*=\inf_{\gamma\in\mathcal{K}_2}\mathcal{E}(\gamma).
\end{align*}
\end{lem}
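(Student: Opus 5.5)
Since $\mathcal{P}_2\subset\mathcal{K}_2$ (a rank-two orthogonal projection with $H^1$ orbitals satisfies $0\le\gamma=\gamma^*\le1$, $\Tr(\gamma)=2$ and $\Tr(-\Delta\gamma)<\infty$), we immediately get $\inf_{\mathcal{K}_2}\mathcal{E}\le e_2^*$, and $e_2^*>0$ is already known from Theorem \ref{thm1} (1) (equivalently \eqref{3.6a}). So the whole task is the reverse inequality $\mathcal{E}(\gamma)\ge e_2^*$ for every $\gamma\in\mathcal{K}_2$. We may assume $\mathcal{E}(\gamma)<\infty$, i.e. $\int\rho_\gamma(\log\rho_\gamma)_-dx<\infty$; note that the positive part $\int(\rho_\gamma\log\rho_\gamma)_+$ is always finite by the Lieb--Thirring bound \eqref{lt}, exactly as in \eqref{3.83}, so $\mathcal{E}$ is well defined with values in $\R\cup\{+\infty\}$ on $\mathcal{K}_2$.

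The plan is to apply Lemma \ref{lem-projector-decomposition}: take the measurable family $\{\gamma_t\}_{t\in[0,1)}\subset\mathcal{P}_2$ with $\Tr(-\Delta\gamma)=\int_0^1\Tr(-\Delta\gamma_t)dt$ and $\rho_\gamma=\int_0^1\rho_{\gamma_t}dt$ a.e. The key point is that the map $\rho\mapsto\int\rho\log\rho\,dx$ is convex, so by Jensen's inequality applied pointwise in $x$ to the convex function $\Phi(s)=s\log s$ and the probability measure $dt$ on $[0,1)$,
\[
\rho_\gamma(x)\log\rho_\gamma(x)=\Phi\Big(\int_0^1\rho_{\gamma_t}(x)dt\Big)\le\int_0^1\Phi\big(\rho_{\gamma_t}(x)\big)dt\quad\text{a.e. }x.
\]
Integrating in $x$ and using Tonelli/Fubini then gives $\int\rho_\gamma\log\rho_\gamma\le\int_0^1\int\rho_{\gamma_t}\log\rho_{\gamma_t}\,dx\,dt$, hence $\mathcal{E}(\gamma)\ge\int_0^1\mathcal{E}(\gamma_t)dt\ge e_2^*$, since each $\gamma_t\in\mathcal{P}_2$ has $\mathcal{E}(\gamma_t)\ge e_2^*$ for a.e. $t$.

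The main obstacle is justifying the Fubini interchange, since $\Phi$ changes sign and $\int\rho_{\gamma_t}\log\rho_{\gamma_t}$ could be $-\infty$ for some $t$. I would split $\Phi=\Phi_+-\Phi_-$. For the positive part, use $\Phi_+(s)\le C_d s^{1+2/d}$ and the Lieb--Thirring inequality \eqref{lt} with $\|\gamma_t\|=1$: $\int\Phi_+(\rho_{\gamma_t})dx\le C\Tr(-\Delta\gamma_t)$, whose $t$-integral equals $C\Tr(-\Delta\gamma)<\infty$ by \eqref{kinetic-decomposition}. Thus $(x,t)\mapsto\Phi_+(\rho_{\gamma_t}(x))$ is integrable, Tonelli applies to both $\Phi_\pm$ separately, and the integral $\int_0^1\int\Phi(\rho_{\gamma_t})\,dx\,dt$ is well defined in $[-\infty,\infty)$ and equals $\int_0^1\big(\int\Phi(\rho_{\gamma_t})dx\big)dt$. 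Consequently $\int_0^1\mathcal{E}(\gamma_t)dt$ is well defined in $(-\infty,+\infty]$, Jensen gives $\mathcal{E}(\gamma)\ge\int_0^1\mathcal{E}(\gamma_t)dt$, and because $\mathcal{E}(\gamma_t)\ge e_2^*$ for a.e. $t$ we conclude $\mathcal{E}(\gamma)\ge e_2^*$ (in particular $\mathcal{E}(\gamma_t)<\infty$ for a.e. $t$ when $\mathcal{E}(\gamma)<\infty$). One also needs measurability of $t\mapsto\rho_{\gamma_t}(x)$, which is clear since $\gamma_t$ is piecewise constant on the countably many intervals $I_i\cap\tau^{-1}(I_j)$. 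This yields $e_2^*=\inf_{\mathcal{K}_2}\mathcal{E}$.
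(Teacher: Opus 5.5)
Your proposal is correct and follows the paper's proof. The easy direction is the inclusion $\mathcal{P}_2\subset\mathcal{K}_2$; for the reverse you use the rank-two decomposition of Lemma \ref{lem-projector-decomposition}, pointwise Jensen and Fubini to get $\mathcal{E}(\gamma)\ge\int_0^1\mathcal{E}(\gamma_t)\,dt\ge e_2^*$.

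The only difference is how the interchange of integrals is justified. The paper applies Jensen to the regularized convex functions $h_\varepsilon(s)=s\log(s+\varepsilon)$, for which all integrals are finite, and then lets $\varepsilon\searrow0$ by monotone convergence. You apply Jensen to $s\log s$ directly, using joint integrability of the positive part (via Lieb--Thirring) and Tonelli for the negative part. This is equally valid and slightly shorter.
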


\noindent \textbf{Proof.}
The same argument of \eqref{3.83} gives  that
\begin{align}\label{3.29a}
\inte \big(\rho_\gamma\text{log}\rho_\gamma\big)_+ dx
<\infty,\ \    \forall\  \gamma\in\mathcal{K}_2.
\end{align}
This implies that $\mathcal{E}: \mathcal{K}_2\to\R\cup \{+\infty\}$ is well-defined. 
Since $\mathcal{P}_2\subset\mathcal{K}_2$,  we have
\begin{align}\label{easy-relaxed-bound}
\inf_{\gamma\in\mathcal{K}_2}\mathcal{E}(\gamma)
\leq
\inf_{\gamma\in\mathcal{P}_2}\mathcal{E}(\gamma)
=e_2^*.
\end{align}

It remains to prove the reverse inequality of (\ref{easy-relaxed-bound}).
Let $\gamma\in\mathcal{K}_2$.  If $\inte \rho_\gamma\text{log}\rho_\gamma dx=-\infty$, then $\mathcal{E}(\gamma)=+\infty$, and we are done. Thus, without loss of generality, we may assume from \eqref{3.29a} that  $-\infty<\inte \rho_\gamma\text{log}\rho_\gamma dx<\infty$. 
By Lemma \ref{lem-projector-decomposition}, then there exists a measurable
family $\{\gamma_t\}_{t\in[0,1)}\subset\mathcal{P}_2$ such that
\begin{align}\label{3.29}
\Tr(-\Delta\gamma)=\int_0^1\Tr(-\Delta \gamma_t)\,dt,
\end{align}
and
\begin{align}\label{3.28}
\rho_\gamma(x)=\int_0^1\rho_{\gamma_t}(x)dt\ \ \ \text{for almost every}\ \, x\in\R^d.
\end{align}
The same argument of  \eqref{3.29a} yields from \eqref{3.29} that
\begin{equation*}
\begin{split}\int_0^1\int_{\R^d}
	\big(\rho_{\gamma_t}\log\rho_{\gamma_t}\big)_+\,dx\,dt
	\leq&
	C_d c^{-1}_{LT}(d)
	\int_0^1\Tr(-\Delta \gamma_t)\,dt\\
	=&C_dc^{-1}_{LT}(d)\Tr(-\Delta\gamma)<\infty,
\end{split}\end{equation*}
which thus gives that $\int_0^1\int_{\R^d}\rho_{\gamma_t}\log\rho_{\gamma_t}\,dxdt$ is well-defined.

We now claim that
\begin{align}\label{entropy-jensen}
-\infty<\int_{\R^d}\rho_\gamma\log\rho_\gamma\,dx
\leq\int_0^1\int_{\R^d}\rho_{\gamma_t}\log\rho_{\gamma_t}\,dxdt<\infty.
\end{align}
Indeed, define for $\varepsilon\in(0,1]$,
\[
h_\varepsilon(s):=s\log(s+\varepsilon), \ \    s\geq0.
\]
Since
\[
h_\varepsilon''(s)
=
\frac{s+2\varepsilon}{(s+\varepsilon)^2}>0, \ \    s\geq0,
\]
we obtain that  $h_\varepsilon (s)$ is convex in $s\geq0$. By Jensen's inequality, this then implies from \eqref{3.28}
that
\begin{align}\label{3.35c}
h_\varepsilon(\rho_\gamma(x))
=
h_\varepsilon\left(\int_0^1\rho_{\gamma_t}(x)\,dt\right)
\leq
\int_0^1h_\varepsilon\big(\rho_{\gamma_t}(x)\big)\,dt\ \,\ \mbox{a.e. in}\ \, \R^d.
\end{align}
Note that for any $s\geq0$ and $\varepsilon\in(0,1]$,
\[h_\varepsilon(s)\geq s\log\varepsilon,\]and
\[
\big(h_\varepsilon(s)\big)_+
\leq s\log(1+s)
\leq \tilde C_d\big(s+s^{1+\frac2d}\big),
\]
where $\tilde C_d>0$ depends  only on $d\in\mathbb{N}^+$. Following \eqref{lt}, these further yield from   \eqref{3.29} and \eqref{3.28} that for any  $\varepsilon\in(0,1]$,
\begin{align}\label{3.36b}
0&\leq\int_{\R^d}\big(h_\varepsilon(\rho_\gamma)\big)_-dx
\leq \big(-\log\varepsilon\big)\int_{\R^d} \rho_\gamma dx<\infty,\\[2mm]
0&\leq\int_{\R^d}\big(h_\varepsilon(\rho_\gamma)\big)_+dx
\leq \tilde C_d\int_{\R^d} \rho_\gamma dx+\tilde C_d\int_{\R^d} \rho_\gamma^{1+\frac2d} dx<\infty,\\[2mm]\label{3.36d}
0&\leq\int_{\R^d}\int_0^1\big(h_\varepsilon(\rho_{\gamma_t})\big)_-dtdx
\leq  \big(-\log\varepsilon\big)\int_{\R^d}\int_0^1 \rho_{\gamma_t}dtdx\nonumber\\
& \ \ \ \ \ \ \ \ \ \  \  \ \ \  \ \ \ \ \ \ \ \ \ \ \ \ \ \ \ \ \ \ \ \  =\big(-\log\varepsilon\big)\int_{\R^d}\rho_{\gamma}dx<\infty,
\end{align}	
and
\begin{align}\label{3.37}
0\leq \int_{\R^d}\int_0^1\big(h_\varepsilon(\rho_{\gamma_t})\big)_+dtdx 
\leq &\tilde C_d\int_{\R^d} \int_0^1\rho_{\gamma_t} dtdx
+\tilde C_d\int_{\R^d}\int_0^1 \rho_{\gamma_t}^{1+\frac2d} dtdx\nonumber\\
=&\tilde C_d\inte\rho_\gamma dx+\tilde C_d\int_0^1\int_{\R^d} \rho_{\gamma_t}^{1+\frac2d} dxdt\\
\leq& \tilde C_d\inte\rho_\gamma dx+\tilde C_dc_{LT}^{-1}(d)\int_0^1\text{Tr}(-\Delta \gamma_t)dt\nonumber\\
=&\tilde C_d\inte\rho_\gamma dx+\tilde C_dc_{LT}^{-1}(d)\text{Tr}(-\Delta \gamma)	<\infty.\nonumber
\end{align}

By Fubini's theorem, we  conclude from \eqref{3.35c}--\eqref{3.37} that for any  $\varepsilon\in(0,1]$,
\begin{align}\label{regularized-jensen}
-\infty<\int_{\R^d}h_\varepsilon(\rho_\gamma)\,dx
\leq\int_{\R^d}\int_0^1h_\varepsilon(\rho_{\gamma_t})\,dt\,dx
=\int_0^1\int_{\R^d}h_\varepsilon(\rho_{\gamma_t})\,dx\,dt<\infty.
\end{align}
Since
\[
h_1(s)-h_\varepsilon(s)\geq0,\ \   \forall\ s\geq0,
\]
and $h_1(s)-h_\varepsilon(s)$ increases pointwise in $s\geq0$ as $\varepsilon\searrow0$, the monotone convergence theorem gives that
\[
\lim_{\varepsilon\searrow0}
\int_{\R^d}h_\varepsilon(\rho_\gamma)\,dx
=\int_{\R^d}\rho_\gamma\log\rho_\gamma\,dx.
\]
Applying the same argument  as above on $[0,1)\times\R^d$, one gets that
\[
\lim_{\varepsilon\searrow0}
\int_0^1\int_{\R^d}h_\varepsilon(\rho_{\gamma_t})\,dx\,dt
=
\int_0^1\int_{\R^d}\rho_{\gamma_t}\log\rho_{\gamma_t}\,dx\,dt.
\]
Setting $\varepsilon\searrow0$ for \eqref{regularized-jensen},  we therefore obtain that the claim \eqref{entropy-jensen} holds true.



As a consequence of \eqref{3.29} and \eqref{entropy-jensen}, we have
\begin{align*}
\mathcal{E}(\gamma)
&=\Tr(-\Delta\gamma)-\int_{\R^d}\rho_\gamma\log\rho_\gamma\,dx\\
&\geq
\int_0^1
\left[
\Tr(-\Delta \gamma_t)
-
\int_{\R^d}\rho_{\gamma_t}\log\rho_{\gamma_t}\,dx
\right]dt
=\int_0^1\mathcal{E}(\gamma_t)\,dt.
\end{align*}
Since $\gamma_t\in\mathcal{P}_2$ holds for almost every $t\in[0,1)$,    the definition of $e_2^*$ gives that  for almost every $t\in[0,1)$,
\[
\mathcal{E}(\gamma_t)\geq e_2^*,
\ \
\mathcal{E}(\gamma)
\geq
\int_0^1e_2^*\,dt
=e_2^*.
\]
Taking the infimum over $\gamma\in\mathcal{K}_2$, it then yields from above that  $\inf_{\gamma\in\mathcal{K}_2}\mathcal{E}(\gamma) \geq e_2^*.$
Together with \eqref{easy-relaxed-bound}, this therefore completes the proof of Lemma \ref{lemrelax}. \qed


\vspace{.15cm}

We are next ready to prove Theorem \ref{thmlog}, which is concerned with the sharp logarithmic Sobolev inequality \eqref{logsobo} for orthonormal functions. 

\vspace{.15cm}

\noindent\textbf{Proof of Theorem \ref{thmlog}.} Applying Theorem \ref{thm1}, we first derive from  Lemma \ref{lemrelax} that
\begin{align}\label{4.65m}
\Tr(-\Delta\gamma)-\int_{\R^d}\rho_\gamma\log\rho_\gamma\,dx\geq e_2^*>0, \ \   \forall\ \gamma\in\mathcal{K}_2,
\end{align}
and the optimal constant $e_2^*$ can be attained for some $\gamma^*\in\mathcal{P}_2\subset \mathcal{K}_2$.
We first claim that $\Tr(-\Delta\gamma)>0$ holds for   any $\gamma\in \mathcal{K}_2$. Indeed, if $\Tr(-\Delta\gamma)=0$ holds for some $\gamma\in \mathcal{K}_2$, then each eigenfunction of $\gamma$ corresponding to a positive eigenvalue
would have zero gradient and hence it would be a  constant. This is however
impossible, and the above claim is hence proved.

Define
$$
\gamma_\lambda(x, y):=\lambda^d\gamma(\lambda x, \, \lambda y),\ \   \lambda>0,
$$
where  $\gamma_\lambda(x, y)$ and $\gamma(x, y)$ denote the integral kernels of  the operators $\gamma_\lambda$ and $\gamma$, respectively. We then have $\gamma_\lambda\in\mathcal{K}_2$ and
\begin{align}\label{M:3.42}
e_2^*\leq\mathcal{E}(\gamma_\lambda)=\lambda^2\Tr(-\Delta\gamma)
-\int_{\R^d}\rho_\gamma\log\rho_\gamma\,dx
-2d\log\lambda,\ \   \forall\ \lambda>0,
\end{align}
where \eqref{4.65m} is used. Note that the infimum of the right-hand side for (\ref{M:3.42}) is attained at $\lambda^2=\frac{d}{\Tr(-\Delta\gamma)}$. This thus yields that 
\begin{align}\label{3.42}
\int_{\R^d}\rho_\gamma\log\rho_\gamma\,dx
\leq
d\log\left(\frac{\Tr(-\Delta\gamma)}{d}\right)
+d-e_2^*,\ \   \forall\ \gamma\in \mathcal{K}_2,
\end{align}
which proves \eqref{logsobo}.

Let  $\gamma^*\in \mathcal{K}_2$ be a minimizer of $e_2^*$. Then the function
\[
\lambda\longmapsto\mathcal{E}(\gamma^*_\lambda)=\lambda^2\Tr(-\Delta\gamma^*)
-\int_{\R^d}\rho_{\gamma^*}\log\rho_{\gamma^*}dx
-2d\log\lambda
\]
attains its minimum at $\lambda=1$. This further yields that
\[
0=\left.\frac{d}{d\lambda}\mathcal{E}(\gamma^*_\lambda)\right|_{\lambda=1}
=2\Tr(-\Delta\gamma^*)-2d,
\]
and hence
\[
\Tr(-\Delta\gamma^*)=d,\ \   \int_{\R^d}\rho_{\gamma^*}\log\rho_{\gamma^*}dx
=d-e_2^*.
\]
Therefore, the identity of \eqref{3.42} holds for any minimizer of $e_2^*$, and the proof of Theorem \ref{thmlog} is therefore complete. \qed

Let $e_2^*>0$ and $\mathcal{K}_2$ be given by \eqref{e} and \eqref{k2}, respectively. We finally remark the following equivalence.

\begin{rem}\label{rem}
The proof of Theorem \ref{thmlog} implies that any minimizer of $e_2^*$ is essentially an optimizer of \eqref{logsobo}. Conversely, one can also conclude that up to an appropriate normalization,
any optimizer  of \eqref{logsobo} is actually a minimizer of $e_2^*=\inf_{\gamma\in\mathcal{K}_2}\mathcal{E}(\gamma)$.
To address the latter one, let $\gamma\in\mathcal{K}_2$ be an optimizer of \eqref{logsobo}. Since the identity of \eqref{3.42} holds essentially for $\gamma$, we have
\begin{align}\label{r}
\int_{\R^d}\rho_\gamma\log\rho_\gamma\,dx
		= d\log\left(\frac{\Tr(-\Delta\gamma)}{d}\right)+d-e_2^*.
\end{align}
The proof of Theorem \ref{thmlog} also gives that $\text{Tr}(-\Delta\gamma)>0$. Denote $\gamma_\lambda(x, y):=\lambda^d\gamma(\lambda x, \lambda y)$, where $\lambda=\big(d/\text{Tr}(-\Delta\gamma)\big)^{1/2}$. We then derive from \eqref{r} that
	\begin{align*}
		\mathcal{E}(\gamma_\lambda)
		=&\lambda^2\Tr(-\Delta\gamma)
		-\int_{\R^d}\rho_\gamma\log\rho_\gamma\,dx-2d\log\lambda\\
		=&d-\int_{\R^d}\rho_\gamma\log\rho_\gamma\,dx+d\, \text{log}\big(\frac{\text{Tr}(-\Delta\gamma}{d}\big)
		=e^*_2.
	\end{align*}
Together with Lemma \ref{lemrelax}, this further implies that $\gamma_\lambda$ is actually a minimizer of $e_2^*=\inf_{\gamma\in\mathcal{K}_2}\mathcal{E}(\gamma)$, and we are thus done.
\end{rem}

\noindent{\bf Acknowledgements:} The authors thank Prof. Phan Th\`anh Nam very much for his fruitful discussions on the present paper.




\begin{thebibliography}{99}
	
	
\bibitem{zfl} X. M. An, S. J.  Peng, X. Yang and F. L. Zhong, {\em Qualitative analysis for ground state solutions of logarithmic Schr\"odinger equations under a small constant magnetic field in $\R^N$}, J. Funct. Anal. {\bf 290} (2026), no. 7, 111333.

\bibitem{regu1}J. Angulo Pava and A. H. Ardila, {\em  Stability of standing waves for the logarithmic Schr\"{o}dinger equation with attractive delta potential},  Indiana Univ. Math. J. {\bf 67} (2018), no. 2, 471--494.





\bibitem{begain} M. Badiale and E. Serra, Semilinear Elliptic Equations for Beginners, Springer, London, 2011.

\bibitem{lognls} I. Bialynicki-Birula and  J. Mycielski, {\em Nonlinear wave mechanics}, Ann. Phys. {\bf100} (1976), 62--93.

	
\bibitem{CarlenLieb}
E. A. Carlen and E. H. Lieb, \emph{Optimal hypercontractivity for Fermi fields and related non-commutative integration inequalities}, Comm. Math. Phys. \textbf{155} (1993), no. 1, 27--46.	
	
	
\bibitem{1983NA} T. Cazenave, {\em Stable solutions of the logarithmic Schr\"odinger equation,} Nonlinear Anal. {\bf 7} (1983), no. 10, 1127--1140.
	
	
\bibitem{Cazenave} T. Cazenave,  Semilinear Schr\"{o}dinger Equations, Courant Lecture Notes in Mathematics Vol. {\bf10}, Courant Institute of Mathematical Science, New York, 2003.
	
\bibitem{cg1} B. Chen and Y. J. Guo, {\em Ground states of fermionic nonlinear Schr\"odinger systems with Coulomb potential I: the $L^2$-subcritical case},  Lett. Math. Phys. {\bf 114} (2024), no. 6, Paper No. 132.

\bibitem{ims} H. L. Cycon, R. G. Froese, W. Kirsch and B. Simon,  Schr\"{o}dinger Operators with Application to Quantum Mechanics and Global Geometry, Texts and Monographs in Physics, Springer Study Edition, Springer-Verlag, Berlin, 1987.

\bibitem{uniq2014} P. D'Avenia, E. Montefusco  and M. Squassina, {\em On the logarithmic Schr\"odinger equation}, Commun. Contemp. Math. {\bf 16} (2014), 1350032.
	
\bibitem{depino} M. Del Pino and  J. Dolbeault, {\em The optimal Euclidean  $L^p$-Sobolev logarithmic inequality}, J. Funct. Anal. {\bf 197} (2003), no. 1, 151--161.
	
	
\bibitem{DFLP} J. Dolbeault, P. Felmer, M.  Loss and  E.  Paturel,  {\em Lieb-Thirring type inequalities and Gagliardo-Nirenberg inequalities for systems}, J. Funct. Anal. {\bf 238} (2006), no. 1, 193--220.
	
\bibitem{clt} J. Dolbeault, A. Laptev and M. Loss, {\em Lieb-Thirring inequalities with improved constants}, J. Eur. Math. Soc. {\bf 10} (2008), 1121--1126.

\bibitem{open} R. L. Frank,  {\em The Lieb-Thirring inequalities: recent results and open problems}, Proc. Sympos. Pure Math. {\bf 104} (2021), 45--86.
	
\bibitem{ii} R. L. Frank, D. Gontier and M. Lewin, {\em The nonlinear Schr\"{o}dinger equation for orthonormal functions II: Application to Lieb-Thirring inequalities}, Comm. Math. Phys. {\bf 384} (2021), no. 3, 1783--1828.

\bibitem{elli} D. Gilbarg and N. S. Trudinger, Elliptic Partial Differential Equations of Second Order, Classics in Math. Springer-Verlag, Berlin, 2001.


\bibitem{i} D. Gontier, M. Lewin and F. Q. Nazar, \emph{The nonlinear Schr\"odinger equation for orthonormal functions: existence of ground states}, Arch. Ration. Mech. Anal. {\bf240} (2021), 1203--1254.

\bibitem{Gross}
L. Gross, \emph{Hypercontractivity and logarithmic Sobolev inequalities for the Clifford--Dirichlet form}, Duke Math. J. \textbf{42} (1975), no. 3, 383--396.


\bibitem{hq} Q. Han and F. H. Lin, Elliptic Partial Differential Equations, 2nd ed., Courant Lecture Notes in Math. Vol. {\bf 1}, Courant Institute of Math. Science/AMS, New York, 2011.


\bibitem{ho} M. Hoffmann-Ostenhof and T. Hoffmann-Ostenhof,  \emph{Schr\"{o}dinger inequalities and asymptotic behavior of the electron density of atoms and molecules}, Phys. Rev. A  {\bf 16} (1977), 1782--1785.





\bibitem{ge} M. Lewin, {\em Geometric methods for nonlinear many-body quantum systems}, J. Funct. Anal. {\bf260} (2011), no. 12, 3535--3595.


\bibitem{Lieb1981} E. H. Lieb, \emph{Variational principle for many-fermion systems}, Phys. Rev. Lett. \textbf{46} (1981), no. 7, 457--459.


\bibitem{analysis} E. H. Lieb and M. Loss,  Analysis, Graduate Studies in Mathematics Vol. 14, 2nd ed., American Mathematical Society, Providence, RI, 2001.









\bibitem{Rabinowitz} P. H. Rabinowitz, \emph{On a class of nonlinear Schr\"{o}dinger equations}, Z. Angew. Math. Phys. {\bf 43} (1992), 270--291.

\bibitem{kato} M. Reed and B. Simon,  Methods of Modern Mathematical Physics II: Fourier analysis, self-adjointness, Academic Press, New York-London, 1975.

\bibitem{modern4} M. Reed and B. Simon, Methods of Modern Mathematical Physics, IV: Analysis of Operators, Academic Press, New York-London, 1978, xv+396 pp.


\bibitem{shuai} W. Shuai, {\em Multiple solutions for logarithmic Schr\"odinger equations}, Nonlinearity {\bf 32} (2019), no. 6, 2201--2225.






\bibitem{wzqref26} W. C. Troy, {\em Uniqueness of positive ground state solutions of the logarithmic Schr\"odinger equation}, Arch. Ration. Mech. Anal. {\bf 222} (2016), 1581--1600.



\bibitem{wzq} Z. Q. Wang and C. X. Zhang, {\em  Convergence from power-law to logarithm-law in nonlinear scalar field equations},  Arch. Ration. Mech. Anal. {\bf 231} (2019), no. 1, 45--61.

\bibitem{1987log} F. B. Weissler, {\em Logarithmic Sobolev inequalities for the heat-diffusion semigroup}, Trans. Amer. Math. Soc. {\bf237} (1978), 255--269.

\bibitem{minimax} M. Willem,  Minimax Theorems, Progress in Nonlinear Differential Equations and Their Applications Vol. {\bf 24}, Birkh\"{a}user Boston, Inc., Boston, 1996.



\bibitem{normal} L. Y. Zhang and C. X. Zhang, {\em The asymptotic behaviors of normalized ground states for nonlinear Schr\"odinger equations},
NoDEA Nonlinear Differential Equations Appl. {\bf30} (2023), no. 3, Paper No. 44, 12 pp.



\end{thebibliography}
\end{document}